\documentclass{amsart}  
\usepackage[parfill]{parskip}
\usepackage{graphicx, verbatim}
\usepackage{appendix}
\usepackage{amsfonts}
\usepackage{url}
\usepackage{hyperref} 
\hypersetup{backref,pdfpagemode=FullScreen,colorlinks=true}
\usepackage{amsmath}
\usepackage{amssymb}
\usepackage{tikz-cd}
\usepackage{amscd}
\usepackage{color}
\usepackage{amsthm}
\usepackage{bm}
\usepackage{indentfirst}
\usepackage[hmargin=3cm,vmargin=3cm]{geometry}
\numberwithin{equation}{section}
 
\newtheorem{axiom}[equation]{Axiom} 
\newtheorem{theorem}[equation]{Theorem} 
\newtheorem{proposition}[equation]{Proposition}
 
\newtheorem{lemma}[equation]{Lemma} 
 
\newtheorem{corollary}[equation]{Corollary} 
 
\newtheorem{conjecture}{Conjecture}
\newtheorem{definition}[equation]{Definition}
\theoremstyle{definition}

\newtheorem{notation}{Notation}
\newtheorem{terminology}{Terminology}

\theoremstyle{remark}

\newtheorem{remark}[equation]{Remark}
\newtheorem{example}{Example}
\newtheorem{question}{Question}

\DeclareMathOperator {\spann} {span}

\DeclareMathOperator {\sign} {sign}
\DeclareMathOperator {\Id} {Id}

\DeclareMathOperator {\Symp} {Symp}
\DeclareMathOperator {\Det} {Det}

\DeclareMathOperator {\energy} {energy}

\DeclareMathOperator{\image}{\mathrm{image}}

\DeclareMathOperator{\id}{\mathrm{1}}
\DeclareMathOperator{\lcs}{lcs}

\begin{document}
\title {Elliptic curves in lcs manifolds and metric
invariants}
\author{Yasha Savelyev}
\email{yasha.savelyev@gmail.com}
\address{Faculty of Science, University of Colima, Mexico}
\keywords{locally conformally symplectic manifolds,
Gromov-Witten theory, virtual fundamental class, Fuller
index}

\begin{abstract} 
We study invariants defined by count of charged, elliptic $J$-holomorphic curves in locally conformally symplectic manifolds.
We use this to define $\mathbb{Q} $-valued deformation invariants of certain
complete Riemann-Finlser manifolds and their isometries and this
is used to find some new phenomena in Riemann-Finlser geometry.
In contact geometry this Gromov-Witten theory is used to
study fixed Reeb strings of strict contactomorphisms.
Along the way, we state an analogue of the Weinstein conjecture in lcs
geometry, directly extending the Weinstein conjecture, and
discuss various partial verifications. 
A counterexample for a stronger, also natural form of this
conjecture is given. 


\end{abstract}
\maketitle
\tableofcontents 
\section {Introduction}
The study of $J$-holomorphic curves in symplectic manifolds
was initiated by Gromov ~\cite{cite_GromovPseudo}. In that work and since
then it have been rational $J$-holomorphic curves that were
central in the subject. We study here certain Gromov-Witten
type theory of $J$-holomorphic elliptic curves in locally
conformally symplectic manifolds, for short lcs manifolds.
For lcs manifolds it appears that instead elliptic (and
possibly higher genus) curves
are central. One explanation for this is that rational
$J$-curves in an lcs manifold $M$ have $\widetilde{J}
$-holomorphic lifts to the
universal cover $\widetilde{M}, \widetilde{J}$, where the form is globally
conformally symplectic. Hence, rational
Gromov-Witten theory is a priori insensitive to the information carried by the Lee form (Section
\ref{sec:preliminaries}), although it can still be useful
~\cite{cite_SavelyevLCSnonsqueezing}.   

We will present
various applications for these elliptic curve counts in contact
dynamics and for metric invariants of Riemann-Finlser manifolds. We
choose to start the discussion with applications rather than
theory, as the latter requires certain buildup.
We start with Riemann-Finlser geometry, and this story can be understood as
a generalization of the theory in
~\cite{cite_SavelyevGromovFuller}, which focuses on more
elementary geodesic counts.

In what follows for a manifold $X$ the topology on various functional
spaces is usually the topology of $C ^{0}$ convergence
on compact subsets of $X$, unless specified otherwise. $\pi _{1} (X)$
will denote the set of free homotopy classes of continuous maps $o: S ^{1} \to X$.

We recall some definitions from
~\cite{cite_SavelyevGromovFuller}. 
\begin{definition} \label{definition_boundaryincompressible}
Let $X$ be a smooth manifold. Fix an exhaustion by
nested compact sets $\bigcup _{i \in \mathbb{N}} K _{i}
= X$, $K _{i} \supset K _{i-1}$ for all $i \geq 1$.   We say
that a class $\beta \in \pi _{1} (X)$ is \textbf{\emph{boundary
compressible}}  if $\beta $ is in the image of $$inc _{*}: \pi
_{1}(X - K _{i}) \to \pi _{1} (X)$$  for all $i$, where
$inc: X - K _{i} \to X$ is the inclusion map. 
We say that $\beta $ is \textbf{\emph{boundary incompressible}} if it is not boundary compressible.
\end{definition}
Let $\pi _{1} ^{inc} (X)$ denote the
set of such boundary incompressible classes. When $X$ is
compact, we set $\pi _{1} ^{inc} (X) := \pi _{1} (X)
- const$, where $const$ denotes the set of homotopy classes
of constant loops.

\begin{terminology} All our metrics are Riemann-Finsler
metrics unless specified otherwise, and usually denoted by
just $g$. Completeness, always
means forward completeness, and is always assumed, although
we usually explicitly state this. Curvature always means sectional
curvature in the Riemannian case and flag curvature in the
Finsler case. Thus we will usually just
say complete metric $g$, for a forward complete
Riemann-Finsler metric.  	A reader may certainly choose to
interpret all metrics as Riemannian metrics and completeness
as standard completeness.
\end{terminology}

Denote by $L _{\beta } X$ the class $\beta \in \pi _{1}
^{inc}(X) $ component of the free loop space of $X$, with
its compact open topology. Let $g$ be a complete
metric on $X$, and let $S (g, \beta) \subset L _{\beta } X$ denote the subspace of all
unit speed parametrized, closed $g$-geodesics in class $ \beta$.
The elements of $\mathcal{O} (g, \beta ) =S (g, \beta)
/ S ^{1}$  will be called \textbf{\emph{geodesic strings}}.
A geodesic string will be called \textbf{\emph{non-degenerate}}  if the
corresponding $S ^{1}$ family of geodesics is Morse-Bott
non-degenerate. (Equivalently, the corresponding Reeb orbit in
the unit cotangent bundle is non-degenerate).

\begin{definition} \label{definition_betataut}
We say that a metric $g$ on $X$ is \textbf{\emph{$\beta
$-taut}} if $g$ is complete and $S (g, \beta )$ is compact. We will say that $g$ is \textbf{\emph{taut}} if
it is $\beta $-taut for each $\beta \in \pi _{1} ^{inc}
(X)$. 

\end{definition}
As shown in ~\cite{cite_SavelyevGromovFuller}, a basic
example of a taut metric is a complete metric with non-positive 
curvature, or more generally a complete metric all of
whose boundary incompressible geodesics are minimizing in
their homotopy class. Other substantial classes of examples are 
constructed in ~\cite{cite_SavelyevGromovFuller}.  Overall,
the class of taut metrics is large and flexible. However,
it appears it has not been extensively studied, (or possibly
even explicitly defined). 
\begin{definition}\label{def_Chomotopy} Let 
$ \beta \in \pi_{1} ^{inc}(X) $, and let
$g _{0}, g _{1}$
be a pair of $\beta $-taut metrics on $X$. 
A \textbf{\emph{$\beta $-taut deformation (or homotopy)}} between $g
_{0}, g _{1}$, is a continuous (in the topology of $C ^{0}$
convergence on compact sets) family $\{g _{t}\}$, $t \in
[0,1]$ of complete metrics on $X$, 
s.t. $$S (\{g _{t}\}, \beta ) := \{(o,t) \in L _{\beta }X
\times [0,1] \,|\, o \in S (g _{t}, \beta )\}$$ is compact.
We say that $\{g _{t}\}$ is a \textbf{\emph{taut deformation}}  if it is $\beta $-taut for
each $ \beta  \in \pi _{1} ^{inc}(X)$.  
\end{definition}
As shown in ~\cite{cite_SavelyevGromovFuller}, the  $\beta
$-tautness condition is  
trivially satisfied if $g _{t}$ have the property
that all their class $\beta $ closed geodesics are minimal. In
particular if $g _{t}$ have non-positive curvature then $\{g _{t}\}$ is taut by
the Cartan-Hadamard theorem, ~\cite{cite_ChernBook}.

Let $\mathcal{E} (X)$ be set of equivalence classes of
tuples
$$\{(g, \phi) \,|\, \text{$g$ is a taut and $\phi $ is an
isometry of $g$} \},  
$$
where $(g _{0}, \phi _{0})$ is equivalent to $(g _{1},
\phi _{1})$ whenever there is a Frechet smooth homotopy $\{(g _{t}, \phi
_{t})\} _{t \in
[0,1]}$,  s.t. for each $t$  $\phi
_{t}$ is an isometry of $g _{t}$, and $\{g _{t}\}$ is
a taut homotopy. Let us call such a $\{(g _{t}, \phi
_{t})\} _{t \in [0,1]}$
an \textbf{\emph{$\mathcal{E}$-homotopy}} for future use.

By counting certain charged elliptic curves in a lcs
manifold associated to $(g, \phi) $ we define in Section
\ref{sec:Definition of GWF} a functional:
\begin{theorem} \label{thm:generalizationGWF}
For each manifold $X$, there is a natural,
(generally) non-trivial functional
$$\operatorname {GWF}:  \mathcal{E} (X) \times \pi
_{1} ^{inc} (X) \times \mathbb{N} \to
\mathbb{Q}. $$ 
\end{theorem}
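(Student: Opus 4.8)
The plan is to realize $\operatorname{GWF}$ as a Fuller-type rational count of \emph{charged} $J$-holomorphic elliptic curves in an lcs manifold canonically attached to the data $(g,\phi)$. First, to a taut metric $g$ and an isometry $\phi$ of $g$ I would associate the following lcs manifold. Let $\pi\colon S^*_g X \to X$ be the unit cotangent bundle with its canonical contact form $\lambda_g$, whose Reeb flow is the cogeodesic flow; then closed Reeb orbits of $\lambda_g$ lying over loops in class $\beta$ are exactly the lifts of the geodesic strings in $\mathcal{O}(g,\beta)$. The isometry $\phi$ induces a strict contactomorphism $\widehat{\phi}$ of $(S^*_g X,\lambda_g)$, and I form the mapping torus
$$ M_{g,\phi} \;=\; \bigl(S^*_g X \times \mathbb{R}\bigr)\big/ \bigl((x,s)\sim(\widehat{\phi}(x),\, s-1)\bigr), $$
which, as recalled in Section \ref{sec:preliminaries}, carries a natural lcs structure whose Lee form $\theta$ is pulled back from $dt$ on the base circle $\mathbb{R}/\mathbb{Z}$. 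A compatible almost complex structure $J$ on $M_{g,\phi}$ is then determined, up to contractible choice, by a $\lambda_g$-compatible complex structure on the contact distribution.

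Second, for $\beta\in\pi_1^{\mathrm{inc}}(X)$ and $m\in\mathbb{N}$ I would study the moduli space $\mathcal{M}(J,\beta,m)$ of stable, possibly nodal, $J$-holomorphic maps $u\colon E\to M_{g,\phi}$ with $E$ a genus-one curve, such that $u$ has charge $m$ — meaning the pullback $u^*\theta$ integrates to $m$ over the relevant cycle of $E$ — and such that $u$ projects to a closed $g$-geodesic string representing $\beta$. Charged curves of positive genus are precisely those not detected by the rational Gromov--Witten theory of the universal cover, since a charged torus does not lift to $\widetilde M$, which explains why elliptic curves are the natural objects here. The structural facts I must prove are: (i) confinement — for a suitable choice of exhausting data all such curves lie in a fixed compact region of $M_{g,\phi}$; here $\beta$-tautness of $g$, i.e. compactness of $S(g,\beta)$, combined with the boundary incompressibility of $\beta$, prevents escape to infinity; (ii) compactness of the Gromov compactification, using additivity of charge under degeneration to rule out the only dangerous limit, namely an elliptic curve dropping all its charge onto a rational component; and (iii) existence of a $0$-dimensional virtual fundamental class $[\mathcal{M}(J,\beta,m)]^{\mathrm{vir}}$, produced by the abstract perturbation machinery referenced in the abstract. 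The domain-rotation $S^1$-action together with multiply-covered contributions force the count to be rational rather than integral, in Fuller-index fashion, and I would then set
$$ \operatorname{GWF}(g,\phi,\beta,m) \;:=\; \int_{[\mathcal{M}(J,\beta,m)]^{\mathrm{vir}}} 1 \;\in\; \mathbb{Q}. $$

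Third, I would prove that this number is independent of the auxiliary choices ($J$, the exhaustion, the perturbation data) and descends to $\mathcal{E}(X)$. Independence of choices is the usual argument: a generic path of data yields a compact one-dimensional cobordism between the corresponding $0$-dimensional virtual classes, so the counts agree. For invariance under an $\mathcal{E}$-homotopy $\{(g_t,\phi_t)\}_{t\in[0,1]}$ I would run the same argument in the $[0,1]$-parameter: the mapping tori $M_{g_t,\phi_t}$ assemble into a smooth family, and the parametrized moduli space $\bigcup_{t}\mathcal{M}(J_t,\beta,m)$ is compact exactly because $\{g_t\}$ is a $\beta$-taut homotopy, i.e. $S(\{g_t\},\beta)$ is compact. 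This produces a cobordism between the moduli spaces at the endpoints and hence equality of the two rational numbers, which is precisely why $\mathcal{E}(X)$ is defined using taut homotopies. Non-triviality of the functional is established by exhibiting a nonzero value on an explicit example, such as a flat torus or a surface of revolution as in \cite{cite_SavelyevGromovFuller}, a computation the construction is designed to make tractable.

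The main obstacle I anticipate is the analytic package behind (ii) and (iii): controlling the Gromov compactification of charged elliptic curves in a non-exact, non-compact lcs target — especially excluding the degeneration into a rational component carrying the full charge together with connecting cylinders — and constructing a virtual fundamental class coherent enough to survive the cobordism arguments of the previous paragraph. Interwoven with this is the bookkeeping of the two $S^1$-symmetries (Reeb reparametrization and genus-one automorphisms) and of multiple covers, which must be organized so that the resulting denominators, and hence the rational value of $\operatorname{GWF}$, are unambiguously defined.
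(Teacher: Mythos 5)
Your overall architecture agrees with the paper's: $\operatorname{GWF}(g,\phi,\beta,n)$ is indeed defined as a virtual count of charge-$n$ elliptic $J$-holomorphic curves in the mapping-torus lcs manifold of the strict contactomorphism $\widetilde{\phi}$ induced on the unit cotangent bundle $(C,\lambda_g)$, and invariance under an $\mathcal{E}$-homotopy is proved by a parametrized cobordism of virtual classes (Lemma \ref{prop:invariance1}, Theorem \ref{thm:welldefined}, with Pardon-style implicit atlases supplying the virtual class and the isotropy of multiple covers supplying the denominators). But the step you defer as an ``analytic package'' is precisely where the paper's proof has its actual content, and your proposed substitute for it does not work as stated. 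You claim that $\beta$-tautness of $g$ (compactness of $S(g,\beta)$) ``prevents escape to infinity'' of the holomorphic curves, and that charge additivity handles degenerations. Tautness, however, is a statement about closed geodesics, hence about closed Reeb orbits of $\lambda_g$; by itself it says nothing about where $J$-holomorphic tori in the mapping torus can live, nor does it bound their energy. The missing idea is the structural classification supplied by Lemma \ref{lemma:Reeb} together with Proposition \ref{prop:Topembedding}: for an admissible $J$ every non-constant elliptic curve of charge $n$ and class $A^n_{\widetilde{\beta}}$ is (up to reparametrization) a generalized Reeb torus $u^n_o$ over a closed Reeb orbit $o$ whose image is fixed by $\widetilde{\phi}^{\,n}$, and the assignment $u\mapsto o$ is a proper topological embedding of the moduli space into $\mathcal{O}(R^{\lambda_g},\widetilde{\beta})$. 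Compactness of the moduli space, the exclusion of spherical and nodal degenerations (a sphere cannot be a Reeb 2-curve since $H^1(S^2;\mathbb{R})=0$), the vanishing of $c_1$ and hence the index-zero statement, and the parametrized compactness used in the $\mathcal{E}$-homotopy argument (phrased in the paper via absence of sky catastrophes for $\{R^{\lambda_t}\}$) all flow through this embedding; your plan contains no mechanism producing it, and imposing ``$u$ projects to a geodesic string'' as a defining condition only relocates the problem, since you would then have to show this condition is preserved under deformations of $J$ and under Gromov limits.

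Relatedly, your route to non-triviality (``exhibit a nonzero value on an explicit example'') also needs the bridge the paper builds: regularity of Reeb tori over non-degenerate orbits, the sign computation $(-1)^{CZ(o)-n}$, and the identification of the count with the Fuller index (Propositions \ref{prop:regular}, \ref{prop:regular2} and Theorem \ref{thm:GWFullerMain}), which is what reduces $\operatorname{GWF}(g,id,\beta,n)$ to the geodesic-counting invariant of \cite{cite_SavelyevFuller} and makes examples computable. Finally, a small definitional point you omit: when $\widetilde{\phi}^{\,n}_*(\widetilde{\beta})\neq\widetilde{\beta}$ there is no class $A^n_{\widetilde{\beta}}$ and the paper simply sets $\operatorname{GWF}=0$; some such convention is needed for the functional to be defined on all of $\mathcal{E}(X)\times\pi_1^{inc}(X)\times\mathbb{N}$.
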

$\operatorname {GWF}$ stands for Gromov-Witten-Fuller, as
when  $\phi=id$ in $\operatorname {GWF}(g, \phi, \beta, n) $ or when $n=0$ the invariant reduces to
a certain geodesic counting invariant studied in
~\cite{cite_SavelyevFuller}, and in this case such counts
can be defined
purely using Fuller's theory ~\cite{cite_FullerIndex}. 

To understand what this functional is counting in general we first define:
\begin{definition}\label{def:geodesicstring} Let $\phi $ be
an isometry of $X,g$.
Then a \textbf{\emph{charge $n$ fixed geodesic string}}  of
$\phi$ is a closed geodesic $o$ whose image is fixed by
$\phi ^{n}$. That is $\image o = \image \phi ^{n} \circ o $.
If the charge is not specified it is assumed to be one. We say
that such a fixed string is in class $\beta $ if the class
of $o$ is $\beta $.
\end{definition}
We will see that if $\operatorname
{GWF} (g, \phi, \beta, n) \neq 0$ then
there is a charge $n$ fixed $g$-geodesic string of $\phi$ in
class $\beta $.
\begin{remark} \label{remark:}
Moreover, $\operatorname
{GWF} (g, \phi, \beta, n)$ is in fact the ``count'' 
of the latter fixed geodesic strings, if by count we mean 
evaluating the fundamental class of a certain compact
virtual dimension zero Kuranishi space with orbifold points.
This will be explained once we construct the
functional as a Gromov-Witten invariant in Section
\ref{sec:Definition of GWF}.
\end{remark}
Here are some basic related phenomena.  Let $ \beta \in \pi
_{1} ^{inc} (X)$ be not a power class (see
~\cite[Definition 1.7]{cite_SavelyevGromovFuller}), and suppose that $X$ admits a $\beta $-taut metric $g$, then by Theorem
~\cite[Theorem 1.10]{cite_SavelyevGromovFuller}
the $S ^{1}$ equivariant homology $H _{*} ^{S ^{1}}(L
_{\beta}X, \mathbb{Z})$ is finite dimensional. In this case we denote by
$\chi ^{S ^{1}}(L _{\beta }X) $ its Euler characteristic.
\begin{theorem} \label{theorem_pertubHyperbolic}
Let $ \beta \in \pi _{1} ^{inc} (X)$ be not a power class, and suppose that $X$ admits a $\beta $-taut
metric. Suppose further that $\chi ^{S ^{1}}(L _{\beta }X)
\neq 0 $. Then for any $\beta $-taut $g$ on $X$, any isometry
$\phi $ of $g$, in the
component of the $id$, has a charge one fixed geodesic
string in class $\beta $. 
\end{theorem}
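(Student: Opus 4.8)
The plan is to derive this as a corollary of Theorem \ref{thm:generalizationGWF} together with a computation of $\operatorname{GWF}(g,\id,\beta,0)$ and a homotopy-invariance argument. First I would observe that since $\phi$ lies in the component of the identity in the isometry group of $g$, the pair $(g,\phi)$ is $\mathcal{E}$-homotopic to $(g,\id)$: one takes a path $\phi_t$ of isometries of the fixed metric $g$ from $\id$ to $\phi$, which is an $\mathcal{E}$-homotopy in the sense defined above because the metric is constant (hence trivially a $\beta$-taut homotopy) and each $\phi_t$ is an isometry of $g_t = g$. Consequently $\operatorname{GWF}(g,\phi,\beta,1) = \operatorname{GWF}(g,\id,\beta,1)$.

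Next I would identify $\operatorname{GWF}(g,\id,\beta,1)$ with the purely geodesic count of \cite{cite_SavelyevFuller} — as flagged in the paragraph after Theorem \ref{thm:generalizationGWF}, when $\phi = \id$ the charged elliptic curve count reduces to a Fuller-type count of closed $g$-geodesics in class $\beta$, which is an $S^1$-equivariant Euler-characteristic type invariant of $L_\beta X$. The key input is then \cite[Theorem 1.10]{cite_SavelyevGromovFuller} (already invoked in the excerpt), which under the hypotheses — $\beta$ not a power class, $X$ admitting a $\beta$-taut metric — gives that $H^{S^1}_*(L_\beta X,\mathbb{Z})$ is finite dimensional and hence $\chi^{S^1}(L_\beta X)$ is defined; combined with the Fuller-theoretic identification I would argue that $\operatorname{GWF}(g,\id,\beta,1) = \pm\chi^{S^1}(L_\beta X)$ (or a nonzero rational multiple thereof), which is nonzero by hypothesis. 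Therefore $\operatorname{GWF}(g,\phi,\beta,1)\neq 0$.

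Finally, by the statement immediately preceding Definition \ref{def:geodesicstring} — that nonvanishing of $\operatorname{GWF}(g,\phi,\beta,n)$ forces the existence of a charge $n$ fixed $g$-geodesic string of $\phi$ in class $\beta$ — applied with $n=1$, we conclude that $\phi$ has a charge one fixed geodesic string in class $\beta$, which is exactly the assertion.

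The main obstacle I expect is the identification $\operatorname{GWF}(g,\id,\beta,1) = \pm\chi^{S^1}(L_\beta X)$, i.e.\ pinning down precisely what the elliptic curve count computes in the untwisted case and checking it agrees with the Fuller/equivariant-Euler-characteristic invariant of \cite{cite_SavelyevFuller, cite_SavelyevGromovFuller}; this requires care with orientations, the virtual/Kuranishi structure with orbifold points mentioned in Remark \ref{remark:}, and the precise normalization of the count. A secondary (more routine) point is verifying that a path of isometries of a fixed complete metric genuinely qualifies as an $\mathcal{E}$-homotopy under Definition \ref{def_Chomotopy} and the definition of $\mathcal{E}$-homotopy — in particular the Fréchet smoothness and completeness bookkeeping — but this should be immediate since the metric does not move.
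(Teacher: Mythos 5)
Your proposal matches the paper's own proof essentially step for step: the paper also notes that a path of isometries of the fixed metric $g$ from $\id$ to $\phi$ gives an $\mathcal{E}$-homotopy between $(g,\id)$ and $(g,\phi)$, invokes the invariance of $\operatorname{GWF}$ from Theorem \ref{thm:generalizationGWF}, and uses the identification $\operatorname{GWF}(g,\id,\beta,1)=F(g,\beta)=\chi^{S^{1}}(L_{\beta}X)\neq 0$ (via \eqref{eq_GWF=F} and \cite[Theorem 1.10]{cite_SavelyevGromovFuller}) before concluding via the non-vanishing criterion. The only blemish is the stray ``$\operatorname{GWF}(g,\id,\beta,0)$'' in your opening sentence, where charge $1$ is meant, as you in fact use later.
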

In the next couple of corollaries,
we need that the manifold $X$ admits a complete metric of
negative curvature. We also need that there is a class
$\beta \in \pi _{1} ^{inc} (X)$. Notably, this condition is
false for $\mathbb{R} ^{n} $. 
\begin{question} \label{question_secondcondition}
Suppose that $X $ is a non simply connected manifold that admits
a complete metric of negative curvature, does one necessarily have $\pi _{1} ^{inc} (X) \neq \emptyset $?
\end{question}
The answer can be shown to be yes in special cases. A simple
case, any hyperbolic, genus at least one,
possibly infinite type Riemann surface satisfies  $\pi _{1}
^{inc} (X) \neq \emptyset $. A three dimensional example:
take $X$ to be the mapping torus by a pseudo-anosov diffeomorphism
of a surface that is the interior of a compact surface with
boundary, with genus at least two. $X$ admits
a hyperbolic metric by Thurston's classification
~\cite{cite_ThurstonClassificationSurfaceDiffeo} and
the geometrization program and it
satisfies $\pi _{1} ^{inc} (X) \neq \emptyset $.

%
\begin{corollary} \label{corollary_hyperbolic} Suppose that $X$
admits a complete metric of negative curvature, and there is a class
$\beta \in \pi _{1} ^{inc} (X)$. Then for any other
complete non-positively curved metric $g$ on $X$, any isometry
$\phi $ of $g$ in the component of the $id$, has a charge
one, class $\beta $ fixed geodesic string. 
\end{corollary}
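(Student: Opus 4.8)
The plan is to deduce this from Theorem~\ref{theorem_pertubHyperbolic}. Let $h$ denote the given complete metric of negative curvature on $X$ and fix $\beta\in\pi_{1}^{inc}(X)$. First I would reduce to the case that $\beta$ is not a power class: writing $\beta=\gamma^{k}$ with $\gamma$ primitive, the class $\gamma$ is again boundary incompressible --- if $\gamma$ were represented by a loop in $X-K_{i}$, so would be its $k$-fold iterate, hence $\beta$, contradicting Definition~\ref{definition_boundaryincompressible} --- and a charge one fixed $g$-geodesic string $o$ in class $\gamma$ yields one in class $\beta$ via the $k$-fold iterate $o^{(k)}$, since $\image o^{(k)}=\image o=\image\phi\circ o=\image\phi\circ o^{(k)}$. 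So assume henceforth that $\beta$ is not a power class.

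Next I would verify the hypotheses of Theorem~\ref{theorem_pertubHyperbolic}. By the remarks following Definition~\ref{definition_betataut} (the Cartan--Hadamard theorem, \cite{cite_ChernBook}), every complete metric of non-positive curvature on $X$ is taut, in particular $\beta$-taut; this applies to $h$ and to the metric $g$ of the statement. So $X$ admits a $\beta$-taut metric and $g$ is admissible for the conclusion of Theorem~\ref{theorem_pertubHyperbolic}; it remains to check $\chi^{S^{1}}(L_{\beta}X)\neq 0$, which I would compute using $h$. Since $\beta$ is not a power class and $X$ admits a $\beta$-taut metric, $H_{*}^{S^{1}}(L_{\beta}X,\Z)$ is finite dimensional and $\chi^{S^{1}}(L_{\beta}X)$ is defined, by \cite[Theorem 1.10]{cite_SavelyevGromovFuller}. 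Because $\beta$ is boundary incompressible, a length-minimizing sequence of loops in class $\beta$ for $h$ subconverges in a compact subset of $X$ (were it to escape to infinity, $\beta$ would lie in the image of $\pi_{1}(X-K_{i})\to\pi_{1}(X)$ for every $i$, contrary to Definition~\ref{definition_boundaryincompressible}), so the length infimum in class $\beta$ is attained; in strictly negative curvature the closed geodesic in a free homotopy class is then unique (the displacement function $x\mapsto d(x,\gamma x)$ on the Cartan--Hadamard cover has a single geodesic as its minimal set), so $S(h,\beta)$ is one $S^{1}$-orbit, and since $\beta$ is not a power class the $S^{1}$-action on it is free, i.e. $S(h,\beta)\cong S^{1}$. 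This single index-zero, nondegenerate critical circle is the entire critical set of the energy functional on $L_{\beta}X$, so the Morse-theoretic deformation retraction onto it --- legitimate on this component precisely because boundary incompressibility supplies the needed properness, the analytic heart of \cite[Theorem 1.10]{cite_SavelyevGromovFuller} --- gives $H_{*}^{S^{1}}(L_{\beta}X,\Z)\cong H_{*}^{S^{1}}(S^{1},\Z)\cong H_{*}(\mathrm{pt},\Z)$, whence $\chi^{S^{1}}(L_{\beta}X)=1\neq 0$.

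Finally I would apply Theorem~\ref{theorem_pertubHyperbolic} to $\beta$ and the metric $g$: every isometry $\phi$ of $g$ in the identity component has a charge one fixed $g$-geodesic string in class $\beta$, and the reduction of the first paragraph then returns such a string in the originally given class. The step I expect to be the main obstacle is the equivariant homotopy computation: identifying the $S^{1}$-homotopy type of the noncompact space $L_{\beta}X$ with that of the unique closed $h$-geodesic requires the negative gradient flow of the energy functional to retract $L_{\beta}X$ onto its critical set, and the clean justification --- no escape of Palais--Smale sequences to infinity within a boundary incompressible class --- is exactly the technical content to be imported from \cite{cite_SavelyevGromovFuller}; alternatively, one runs $S^{1}$-equivariant Morse--Bott theory directly, with the lone index-zero critical circle (nondegenerate by negativity of the curvature) contributing $\Z$ and nothing else.
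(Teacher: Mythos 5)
Your proposal is correct and takes essentially the same route as the paper: reduce to a primitive (not-a-power) boundary incompressible class, use the negatively curved metric and classical Morse theory on the free loop space to get $\chi^{S^{1}}(L_{\gamma}X)=1$, observe that any complete non-positively curved $g$ is $\gamma$-taut by Cartan--Hadamard, apply Theorem \ref{theorem_pertubHyperbolic}, and then pass back to $\beta$ by iterating the fixed geodesic string. The one step you assert rather than argue --- that $\beta$ admits a primitive root $\gamma$ at all --- is precisely where the paper leans on the negative curvature hypothesis via the proof of \cite[Theorem 1.14]{cite_SavelyevGromovFuller}, so it is not a formality for arbitrary $X$, though your later use of the hyperbolic metric would supply the justification.
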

Theorem \ref{theorem_pertubHyperbolic} deals with isometries
homotopic to the $id$, as such it is interesting only in
dimension three and higher as the topology of surfaces,
admitting metrics with continuous isometry groups is extremely restricted.   The following theorem is about 
general isometries.
\begin{theorem} \label{thm:finitetypeRiemannian}
Suppose that $X$ is a manifold and: 
\begin{itemize}
\item There is an $\mathcal{E}$-homotopy $\{(g _{t}, \phi
	_{t})\}$ on $X$.
\item $g _{0}$ has a unique and non-degenerate geodesic
string in class
$ \beta \in \pi _{1} ^{inc} (X)$. 
\item $\phi _{0,*} ^{n} (\beta) = \beta,  $ for $n \in
\mathbb{N} $.
\end{itemize}
Then there is a class $\beta$, charge $n$, fixed $g
_{1}$-geodesic string of $\phi _{1}$.
\end{theorem}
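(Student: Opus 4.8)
The plan is to use the functional $\operatorname{GWF}$ of Theorem~\ref{thm:generalizationGWF} as a deformation invariant and reduce the statement to a single local computation at $(g_0,\phi_0)$. Since $\operatorname{GWF}$ is a function on $\mathcal{E}(X)$, that is on $\mathcal{E}$-homotopy equivalence classes, the first hypothesis gives
$$\operatorname{GWF}(g_0,\phi_0,\beta,n) = \operatorname{GWF}(g_1,\phi_1,\beta,n).$$
Granting that the left-hand side is nonzero, the detection property recalled after Definition~\ref{def:geodesicstring} --- that $\operatorname{GWF}(g,\phi,\beta,n)\neq 0$ forces a charge $n$ fixed $g$-geodesic string of $\phi$ in class $\beta$ --- applied to $(g_1,\phi_1)$ produces exactly the claimed fixed $g_1$-geodesic string. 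So everything reduces to showing $\operatorname{GWF}(g_0,\phi_0,\beta,n)\neq 0$.

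First I would observe that the unique class $\beta$ geodesic string $o_0$ of $g_0$ is automatically a charge $n$ fixed geodesic string of $\phi_0$. Since $\phi_0$ is a $g_0$-isometry, $\phi_0^n\circ o_0$ is again a unit-speed closed $g_0$-geodesic whose free homotopy class is $\phi_{0,*}^n(\beta)=\beta$ by the third hypothesis; hence it represents a point of $\mathcal{O}(g_0,\beta)=S(g_0,\beta)/S^1$, which by the second hypothesis is the single point $[o_0]$. Therefore $\phi_0^n\circ o_0$ is an $S^1$-reparametrisation of $o_0$, so $\image\phi_0^n\circ o_0=\image o_0$, i.e.\ $o_0$ is a charge $n$ fixed geodesic string in class $\beta$ --- and it is moreover the \emph{only} closed $g_0$-geodesic in class $\beta$. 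By the construction of $\operatorname{GWF}$ as an elliptic Gromov--Witten count in Section~\ref{sec:Definition of GWF}, the underlying charged elliptic curves project onto fixed geodesic strings in class $\beta$; hence the relevant compact, virtual dimension zero moduli space is supported, after the standard $S^1$-reduction, on the single family coming from $o_0$.

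It then remains to compute this localized contribution. Non-degeneracy of $o_0$ means the associated Reeb orbit in the unit cotangent bundle of $(X,g_0)$ --- twisted by $\phi_0$ in the way that defines the lcs manifold attached to $(g_0,\phi_0)$ --- is Morse--Bott non-degenerate, so the Kuranishi model near $o_0$ is unobstructed apart from the tautological $S^1$ normalisation, and the contribution equals the Fuller-type index of an isolated non-degenerate periodic orbit. As in the geodesic counts of \cite{cite_SavelyevFuller} and \cite{cite_SavelyevGromovFuller}, such an orbit contributes $\pm\tfrac{1}{m}\neq 0$, where $m$ is the number of times $o_0$ covers a simple geodesic and the sign is a fixed function of $\sign\det(I-P)$ for the linearised return map $P$. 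Hence $\operatorname{GWF}(g_0,\phi_0,\beta,n)\neq 0$, and combined with the displayed invariance and the detection property this finishes the argument.

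The main obstacle is the last step: one must verify, inside the virtual/Kuranishi framework built in Section~\ref{sec:Definition of GWF}, that the moduli space of charged elliptic curves at $(g_0,\phi_0)$ in class $\beta$ and charge $n$ is compact (here using that $g_0$ is $\beta$-taut) and localizes precisely onto the $S^1$-family of $o_0$, that its virtual dimension is zero, and that the evaluation of its virtual fundamental class reproduces exactly the nonzero Fuller index of $o_0$ --- that is, that the elliptic Gromov--Witten count is normalised to agree with the classical geodesic/Fuller count in the non-degenerate regime. The deformation invariance of $\operatorname{GWF}$ and the detection of fixed geodesic strings are quoted directly from the already-established properties of $\operatorname{GWF}$.
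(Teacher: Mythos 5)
Your proposal is correct and follows essentially the same route as the paper: identify the unique non-degenerate geodesic string of $g_0$ as a charge $n$ fixed string of $\phi_0$, observe that the corresponding moduli space is a single regular point so $\operatorname{GWF}(g_0,\phi_0,\beta,n)=\pm\tfrac{1}{k}\neq 0$, and then invoke the $\mathcal{E}$-homotopy invariance of $\operatorname{GWF}$ (Theorem \ref{thm:generalizationGWF}) together with its detection property at $(g_1,\phi_1)$. The localization and sign/multiplicity computation you flag as the remaining obstacle is exactly what the paper supplies via Proposition \ref{prop:Topembedding}, Proposition \ref{prop:regular} and the Fuller-index identification of Theorem \ref{thm:GWFullerMain}.
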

The following is an immediate corollary, note
that it is non-trivial even for
(infinite type) surfaces.
\begin{corollary} \label{corollary_}
Suppose that $X$ is a manifold and:
\begin{itemize}
\item There is a Frechet smooth homotopy $\{(g _{t}, \phi
	_{t})\} _{t \in [0,1]}$, s.t. $g _{t}$ are complete
	metrics on $X$ and have non-positive curvature.
\item  $\phi _{t} $ is an isometry of $g _{t}$ for each
	$t$.
\item $g _{0}$  has negative curvature.
\end{itemize}
Then for each $\beta \in \pi _{1} ^{inc} (X)$ s.t. $\phi
_{0,*} ^{n} (\beta) = \beta  $ there is a class $\beta$, charge $n$, fixed $g _{1}$-geodesic string of $\phi _{1}$.
\end{corollary}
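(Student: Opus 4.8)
The plan is to obtain this as a direct application of Theorem~\ref{thm:finitetypeRiemannian}: the entire argument consists of checking, for the given family $\{(g_t,\phi_t)\}$ and class $\beta$, the three hypotheses of that theorem.

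First I would check that $\{(g_t,\phi_t)\}$ is an $\mathcal{E}$-homotopy. It is Frechet smooth and $\phi_t$ is an isometry of $g_t$ for all $t$ by hypothesis, so it only remains to see that $\{g_t\}$ is a taut homotopy and that the endpoints $g_0,g_1$ are taut. Each $g_t$ has non-positive curvature, so by the Cartan--Hadamard theorem (in the Finsler setting, ~\cite{cite_ChernBook}) every boundary incompressible closed $g_t$-geodesic is minimizing in its free homotopy class. As observed right after Definition~\ref{def_Chomotopy}, this makes $\gamma$-tautness of the deformation — and of each individual $g_t$ — trivially satisfied for every $\gamma\in\pi_1^{inc}(X)$; hence $\{(g_t,\phi_t)\}$ is an $\mathcal{E}$-homotopy with endpoints in $\mathcal{E}(X)$.

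Second I would show that $g_0$ has a unique, non-degenerate geodesic string in class $\beta$. For existence: fixing an exhaustion witnessing $\beta\in\pi_1^{inc}(X)$, every loop in class $\beta$ meets some fixed $K_i$, so a length-minimizing sequence of such loops subconverges, by completeness of $g_0$ and Arzel\`a--Ascoli, to a closed $g_0$-geodesic in class $\beta$ (this is the argument of ~\cite{cite_SavelyevGromovFuller}). Lifting to the universal cover $\widetilde X$, which by Cartan--Hadamard is diffeomorphic to $\mathbb{R}^{\dim X}$ with a complete metric of strictly negative curvature, the class $\beta$ corresponds to a conjugacy class of a deck transformation $\gamma$, and the closed geodesic just produced shows $\gamma$ is axial; since strictly negative curvature admits no flat strip, the axis of $\gamma$ is unique, so the closed $g_0$-geodesic in class $\beta$ is unique up to rotation, i.e.\ there is a unique geodesic string. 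For non-degeneracy, note that along a closed $g_0$-geodesic $o$ a periodic Jacobi field $J$ has $f := \langle J,J\rangle$ satisfying
\[ f'' \;=\; 2\langle J',J'\rangle - 2\langle R(J,\dot o)\dot o, J\rangle \;\geq\; 2\langle J',J'\rangle \;\geq\; 0, \]
so $f$ is convex and periodic, hence constant; then $J'\equiv 0$ and $\langle R(J,\dot o)\dot o,J\rangle\equiv 0$, which in strictly negative curvature forces $J$ to be a constant multiple of $\dot o$. Thus the underlying $S^1$-family of geodesics has one-dimensional nullity, i.e.\ it is Morse--Bott non-degenerate (the same computation, with the flag-curvature term replacing $R$, handles the Finsler case, ~\cite{cite_ChernBook}).

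Third, the condition $\phi_{0,*}^n(\beta)=\beta$ is literally one of the hypotheses of the corollary. All three hypotheses of Theorem~\ref{thm:finitetypeRiemannian} are thereby verified, and that theorem yields a class $\beta$, charge $n$, fixed $g_1$-geodesic string of $\phi_1$, as claimed. The only step that is not purely formal is the existence half of the second point — producing a closed $g_0$-geodesic in the boundary incompressible class $\beta$ — which is where I expect to rely on the Cartan--Hadamard/deck-transformation argument above (equivalently, on the corresponding statement in ~\cite{cite_SavelyevGromovFuller}); uniqueness and non-degeneracy are then the standard strictly-negative-curvature facts.
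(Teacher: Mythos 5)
Your proposal is correct and follows exactly the route the paper intends: the corollary is stated there as an immediate consequence of Theorem \ref{thm:finitetypeRiemannian}, with non-positive curvature giving tautness of the homotopy via Cartan--Hadamard (as noted after Definition \ref{def_Chomotopy}) and strict negative curvature of $g_0$ giving the unique, Morse--Bott non-degenerate closed geodesic in the boundary incompressible class $\beta$. Your filling-in of the existence (minimization using completeness and boundary incompressibility), uniqueness (flat strip/axis argument), and non-degeneracy (periodic Jacobi field convexity) is the standard justification of those hypotheses and matches the paper's intent.
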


%

There is a partially related theory of $\phi$-invariant geodesics. The latter are geodesics $\gamma
$ satisfying $\gamma (1) = \phi (\gamma (0) ) $ for some
isometry $\phi$ of $X,g$. (These are also analogous to
translated points of contactomorphisms mentioned ahead.)
A charge 1 fixed geodesic string of $\phi$ clearly determines 
a circle family of closed $\phi$-invariant geodesics. On the other hand as these
$\phi$-invariant geodesics
are not required to be closed, if we fix a $\phi$ they can be shown exist under
very general conditions using Morse theory,
Grove~\cite{cite_GroveGeodesics}. 

\subsection{Contact dynamics, fixed Reeb strings and more applications to
isometries} \label{sec:Fixed Reeb strings}
Let $(C ^{2n+1}, \lambda ) $ be a contact manifold with
$\lambda$ a contact form, that is a one form s.t. $\lambda
\wedge (d \lambda) ^{n} \neq 0$.    Denote by
$R ^{\lambda} $ the Reeb vector field 
satisfying: $$ d\lambda (R ^{\lambda}, \cdot ) = 0, 
\quad \lambda (R ^{\lambda}) = 1.$$ We assume throughout
that its flow is complete.
Recall that a \textbf{\emph{closed $\lambda $-Reeb orbit}}
(or just Reeb orbit when $\lambda $ is implicit)  is
a smooth map $$o: (S ^{1} = \mathbb{R} / \mathbb{Z})    \to
C $$ such 
that $$ \dot o (t) = c R ^ {\lambda}  (o (t)), $$ 
with $\dot o (t) $ denoting the time derivative,
for some $c>0$ called period. Let $S (R ^{\lambda
}, \beta )$ denote the space of all closed Reeb orbits in free
homotopy class $\beta $, with its compact open topology. And set $$\mathcal{O} (R
^{\lambda}, \beta ) := S (R ^{\lambda }, \beta )/S ^{1}, $$
where $S ^{1}$ is acting naturally by reparametrization, see Appendix
\ref{appendix:Fuller}.
We say that the action spectrum is \textbf{\emph{discrete}}
if the image of the period map $A: S (R ^{\lambda }, \beta
) \to \mathbb{R} ^{}  $, $o \mapsto \int _{S ^{0}} o ^{*}
\lambda $ is discrete.

\begin{definition} \label{def:} Let $\phi: (C, \lambda ) \to
(C, \lambda) $ be a strict contactomorphism of a contact
manifold. Then a \textbf{\emph{fixed Reeb string of
$\phi$}} is a closed $\lambda$-Reeb orbit $o$ whose image
is fixed by $\phi$. We say that it is in class $\beta$ if the free homotopy class of $o$ is $\beta $.
\end{definition}

\begin{definition}\label{def:}
Assuming that the class $\beta $ is non-torsion \footnote
{In the torsion case the infinite type condition is more
complicated see ~\cite{cite_SavelyevFuller}.}, we say that
$(C, \lambda) $ is \textbf{\emph{infinite type}} for class
$\beta $ if the action
spectrum of $\lambda$ is discrete and there is
a Reeb perturbation $X$ of the vector field $\mathbb{R}
^{\lambda} $  (in a certain natural
sense, ~\cite[Definition 2.6]{cite_SavelyevFuller}),
s.t. all but finitely many class $\beta$ orbits of $X$ have
even Conley-Zehnder index or or all but finitely many orbits of $X$
have odd Conley-Zehnder index. 
\end{definition} 
A typical example of infinite type is the standard contact form
$\lambda _{st}$ on $S ^{2k+1}$,  as shown in
~\cite{cite_SavelyevFuller}.   
\begin{definition} 
We say that $(C, \lambda ) $ is \textbf{\emph{finite type}}
for class $\beta $ if $\mathcal{O} (R ^{\lambda }, \beta
) $ is compact.  And we say that it is \textbf{\emph{finite
non-zero type}}  if in addition $i (R ^{\lambda}, \beta ) \neq 0$, (the
Fuller index, see Appendix \ref{appendix:Fuller}).
\end{definition}
We have already seen basic examples coming from unit
cotangent bundles of non-positively curved manifolds.  We say that $(C, \lambda
) $ is \textbf{\emph{definite
type}} (for class $\beta$) if it is either finite non-zero type or infinite type.
\begin{theorem} \label{thm:infinitetype}
Let $(C, \lambda ) $ be a contact manifold of
definite type for class $\beta$ orbits, then every strict contactomorphism $\phi $ of $(C, \lambda ) $, homotopic to
the $id$ via strict contactomorphisms, has a fixed
Reeb string in class $\beta $. Furthermore, the same holds for every $\lambda' $ sufficiently $C ^{1}$ nearby to $\lambda $.  
In particular, for any  contact form
$\lambda $ on $S ^{2k+1}$, sufficiently $C ^{1}$ nearby to
$\lambda _{st}$, any strict contactomorphism $\phi $ of $(C,
\lambda ) $ homotopic to the $id$ via strict
contactomorphisms has a fixed Reeb string. 
\end{theorem}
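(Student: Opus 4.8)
The plan is to realize fixed Reeb strings as charged elliptic $J$-holomorphic curves in an associated lcs manifold, and then count them via the Gromov-Witten-type invariant of Theorem \ref{thm:generalizationGWF}, or rather its contact-geometric analogue constructed later in the paper. Concretely, to a strict contactomorphism $\phi$ of $(C,\lambda)$ one associates the mapping torus $M_\phi = C \times \R / (x,s+1)\sim(\phi(x),s)$, which carries a natural lcs structure whose Lee form is (a multiple of) $ds$ and whose leafwise symplectic data comes from $d\lambda$ on the contact distribution together with the $\R$-direction; the Reeb flow provides the leafwise symplectization. Closed $\widetilde J$-holomorphic elliptic curves of the appropriate homology class and charge in $M_\phi$, for a suitable almost complex structure $J$ tamed by this lcs structure, correspond precisely to Reeb cylinders in $C\times\R$ whose ends are glued by $\phi$, i.e. to fixed Reeb strings of $\phi$ in class $\beta$. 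This is the content of the moduli-theoretic translation that Section \ref{sec:Definition of GWF} sets up.

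Granting that dictionary, the argument has two halves. First I would show the relevant count is nonzero. For $(C,\lambda)$ of definite type in class $\beta$ and $\phi = id$, the moduli space of charge-one fixed Reeb strings is $\mathcal{O}(R^\lambda,\beta)$ itself; in the finite non-zero type case its virtual count equals the Fuller index $i(R^\lambda,\beta)\neq 0$ by Appendix \ref{appendix:Fuller}, and in the infinite type case the parity hypothesis on Conley-Zehnder indices of a Reeb perturbation forces the signed count to be nonzero (all contributions eventually have the same sign, and discreteness of the action spectrum plus the perturbation confines the relevant orbits to finitely many action windows — this is exactly the mechanism of \cite{cite_SavelyevFuller}). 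So the GWF-type invariant is nonzero for $\phi=id$. Second, I would invoke deformation invariance: a path of strict contactomorphisms $\{\phi_t\}$ from $id$ to $\phi$ induces a path of lcs structures on a fixed mapping-torus-type manifold (or a cobordism of such), and the compactness needed to make the count a deformation invariant is guaranteed because the definite type condition gives an a priori action (hence energy) bound along the deformation, exactly as $\beta$-tautness does in the Riemannian case. Hence the count for $\phi$ is also nonzero, so a charge-one fixed Reeb string in class $\beta$ exists.

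For the $C^1$-stability clause, the point is that finite non-zero type and infinite type are both open conditions under $C^1$-small perturbation of $\lambda$: compactness of $\mathcal{O}(R^\lambda,\beta)$ persists (the period map stays proper on the relevant energy range), the Fuller index is locally constant, and discreteness of the action spectrum together with the index-parity condition survives a small perturbation since only finitely many orbits lie below any fixed action. One then runs the same argument with $\lambda'$ in place of $\lambda$; for $S^{2k+1}$ this specializes to the standard contact form, which \cite{cite_SavelyevFuller} shows is infinite type, giving the final assertion.

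The main obstacle I expect is not the topology but the analysis underpinning the dictionary and the invariance: one must show that the moduli space of charged elliptic $\widetilde J$-curves in the (non-exact, only locally conformally symplectic) target is compact up to the expected breaking, that Gromov compactness and the virtual fundamental class machinery apply despite the Lee form (so that bubbling and the conformal factor do not destroy energy bounds), and that under an $\mathcal{E}$-homotopy — or its contact analogue — the parametrized moduli space remains compact so the count is genuinely invariant. Controlling the interaction of the charge $n$ (the multiplicity with which the curve wraps the Lee direction) with the periods of Reeb orbits, and ensuring transversality can be achieved virtually in the presence of the $S^1$-reparametrization symmetry and multiply covered tori, is where the real work lies; everything above the dictionary is then a formal consequence of nonvanishing plus cobordism invariance.
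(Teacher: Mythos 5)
Your proposal follows essentially the same route as the paper: the paper packages your two halves (nonvanishing of the elliptic-curve count for $\phi = id$ via the identification $GW =$ Fuller index of Theorem \ref{thm:GWFullerMain}, plus cobordism invariance along the path of mapping-torus lcs structures, with compactness supplied by the proper embedding of the parametrized moduli space into $\mathcal{O}(R^{\lambda},\beta)\times[0,1]$ from Proposition \ref{prop:Topembedding}) as Theorem \ref{thm:basic0}, and treats the infinite type case by extracting an open compact subset of $\mathcal{O}(R^{\lambda},\beta)$ with nonzero Fuller index exactly by the mechanism of \cite{cite_SavelyevFuller} that you invoke. The only small adjustment is in the $C^{1}$-stability clause: the paper does not assert that definite type is an open condition, only that a compact open piece with nonvanishing Fuller index persists under $C^{1}$-small perturbation (\cite{cite_SavelyevFuller}), which is precisely the ``Fuller index is locally constant'' point you already mention and is all that the argument needs.
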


There is a partial connection of the theorem with the theory of
translated points.
\begin{definition} [Sandon~\cite{cite_SandonSheila}]
\label{def:translated}
Given a (not necessarily strict) contactomorphism $\phi$ of
$(C, \lambda ) $, a point $p \in C$ is called
a \textbf{\emph{translated point}}  provided that $\phi ^{*}
\lambda (p) = \lambda (p)  $ and $\phi (p)$ lies
on the $\lambda$-Reeb flow line passing through $p$.
\end{definition}
A fixed Reeb string for $\phi$ in particular
determines a special translated point of $\phi$  (one for
each point on the image of the fixed Reeb
string). So the above theorem is partly related to
the Sandon conjecture ~\cite{cite_SandonSheila} on
existence of translated points of contactomorphisms.
However, also note  that the general form of Sandon's
conjecture has counterexamples on $S ^{2k+1}$ for the
standard contact form $\lambda _{st}$, see
	Cant~\cite{cite_cant2022contactomorphisms}. Partially related to
the Sandon conjecture is the Conjecture
\ref{conj:conformalWeinstein} in Section \ref{sec:Results on Reeb 2-curves}, which is an analogue in lcs geometry of the Weinstein conjecture.

\begin{corollary} \label{cor:Riemannian} Let $X,g$ be  
complete, with a class $\beta \in \pi
_{1} ^{inc} (X)$, and such that its unit cotangent bundle is
definite type for class $\widetilde{\beta} $, (defined as in
Section \ref{sec:Definition of GWF}). Then every isometry of $X,g$ homotopic through
isometries to the $id$ has a class $\beta $ fixed
geodesic string. 
\end{corollary}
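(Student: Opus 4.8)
The plan is to deduce this directly from Theorem \ref{thm:infinitetype} by transporting the hypotheses from the Riemannian side to the contact side, and then transporting the conclusion back. First I would recall the standard geodesic–Reeb correspondence: for a complete Riemann-Finsler manifold $(X,g)$, the unit cotangent bundle $C = S^*X$ carries the canonical contact form $\lambda$ whose Reeb flow is the (co)geodesic flow, so that closed $\lambda$-Reeb orbits in a class $\widetilde\beta$ lying over $\beta\in\pi_1^{inc}(X)$ are in natural bijection with unit-speed parametrized closed $g$-geodesics in class $\beta$, and this bijection is $S^1$-equivariant; hence $\mathcal O(R^\lambda,\widetilde\beta)$ is identified with the geodesic strings $\mathcal O(g,\beta)$. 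Under the hypothesis that $(C,\lambda)$ is definite type for class $\widetilde\beta$, Theorem \ref{thm:infinitetype} applies.

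Next I would set up the map from isometries to strict contactomorphisms. An isometry $\phi$ of $(X,g)$ induces, via its cotangent lift $\phi_* = (d\phi^{-1})^*$, a diffeomorphism of $T^*X$ that preserves the tautological one-form, hence restricts to a strict contactomorphism $\Phi$ of $(C,\lambda)$; moreover $\Phi$ commutes with the Reeb flow. The key functorial point is that a path of isometries $\{\phi_t\}$ from $\id$ to $\phi$ (the hypothesis that $\phi$ is homotopic through isometries to $\id$) lifts to a path $\{\Phi_t\}$ of strict contactomorphisms from $\id$ to $\Phi$, so $\Phi$ is homotopic to the identity through strict contactomorphisms. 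Theorem \ref{thm:infinitetype} then produces a fixed Reeb string of $\Phi$ in class $\widetilde\beta$: a closed Reeb orbit $o$ with $\image o = \image \Phi\circ o$.

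Finally I would push this conclusion back down to $X$. Composing $o$ with the bundle projection $C\to X$ gives a closed $g$-geodesic $\bar o$ in class $\beta$, and the identity $\image o = \image\Phi\circ o$ projects to $\image\bar o = \image\phi\circ\bar o$, since the contact lift $\Phi$ covers $\phi$. Thus $\bar o$ is a charge one, class $\beta$ fixed geodesic string of $\phi$, which is exactly the assertion. The main thing to be careful about — the part I would regard as the only real obstacle — is checking that the various identifications (the Reeb-orbit/geodesic bijection, the behavior of free homotopy classes under the projection $C\to X$ and the inclusion/exhaustion conditions defining $\pi_1^{inc}$, and the "definite type for $\widetilde\beta$" bookkeeping) are set up consistently; this is precisely why the statement refers forward to the definitions in Section \ref{sec:Definition of GWF}, and once those are in place the argument is the straightforward functorial translation sketched above.
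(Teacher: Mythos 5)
Your proposal is correct and follows exactly the paper's own argument: lift the isometry (and the isotopy through isometries) to a strict contactomorphism of the unit cotangent bundle isotopic to the identity through strict contactomorphisms, apply Theorem \ref{thm:infinitetype} to get a class $\widetilde{\beta}$ fixed Reeb string, and project it back to a class $\beta$ fixed geodesic string. The paper's proof is just a terser version of the same translation; your extra care about the geodesic--Reeb correspondence and the covering relation $\Phi\mapsto\phi$ is exactly the implicit content there.
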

\begin{theorem} \label{thm:basis-1}
Suppose that $(C, \lambda) $ is
Morse-Bott and some connected component $N \subset \mathcal{O} (R
^{\lambda}, \beta) $ has non-vanishing Euler characteristic. Then  any  contact form
$\lambda'$ on $C$, sufficiently $C ^{1}$ nearby to $\lambda$,
any strict contactomorphism $\phi $ of $(C, \lambda') $,
homotopic to the $id$ via strict contactomorphisms has
class $\beta $ fixed Reeb string.
\end{theorem}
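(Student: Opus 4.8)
The plan is to mimic the proof of Theorem~\ref{theorem_pertubHyperbolic}: reduce the assertion to the non-vanishing of the charged elliptic Gromov--Witten count of Section~\ref{sec:Definition of GWF} \emph{localized} to the action window of the component $N$, and then propagate this non-vanishing to all nearby data by deformation invariance of that count. The hypothesis $\chi(N)\neq 0$ enters only at the ``base point'' $(\lambda, id)$, where the localized count will be pinned to a non-zero rational multiple of $\chi(N)$.

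First I would isolate $N$. Morse--Bottness makes $\mathcal{O}(R^{\lambda},\beta)$ a disjoint union of closed orbifolds; $N$ is one of them, every string in it has one and the same period $a_{N}$, and the class $\beta$ action spectrum of $\lambda$ is discrete near $a_{N}$. Fix $\epsilon>0$ so that $[a_{N}-\epsilon,\,a_{N}+\epsilon]$ meets this spectrum only at $a_{N}$. An Arzel\`a--Ascoli argument (using that $R^{\lambda'}\to R^{\lambda}$ in $C^{0}$ and the period bound) shows that for $\lambda'$ sufficiently $C^{1}$-close to $\lambda$, every class $\beta$, $\lambda'$-Reeb orbit with period in $[a_{N}-\epsilon,\,a_{N}+\epsilon]$ is $C^{1}$-close to a string in $N$ and in particular has period inside a much smaller window about $a_{N}$; thus $a_{N}\pm\epsilon$ remain genuine spectral gaps for $\lambda'$ as well. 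Since a fixed Reeb string of a strict contactomorphism $\phi$ of $\lambda'$ is in particular a $\lambda'$-Reeb orbit, its period lies in the $\lambda'$-spectrum, so the set of class $\beta$ fixed Reeb strings of $\phi$ with period in $[a_{N}-\epsilon,\,a_{N}+\epsilon]$ is compact and isolated from those outside the window. This carves out a compact, virtual dimension zero piece of the moduli space of Section~\ref{sec:Definition of GWF}, with a well-defined rational count $c(\lambda',\phi)$ -- even when the total moduli space for $\lambda'$ is non-compact.

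Next I would run the given homotopy $\{\phi_{t}\}$ through strict contactomorphisms of $\lambda'$ from $\phi_{0}=id$ to $\phi_{1}=\phi$; it produces a continuous family of lcs manifolds and a parametrized moduli space over $[0,1]$. The heart of the argument -- and the step I expect to be the main obstacle -- is to show that the restriction of this parametrized moduli space to the action window near $a_{N}$ is compact. Two things must be checked: that no orbit's period can cross the endpoints $a_{N}\pm\epsilon$ along the homotopy (true because $\lambda'$ is held fixed and each $\phi_{t}$ is strict, so every fixed Reeb string at every time is a $\lambda'$-Reeb orbit, whose period avoids the gap), and that the usual energy concentration, breaking and nodal degenerations are excluded, which is handled by the compactness theory of Section~\ref{sec:Definition of GWF}. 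Granting this, the resulting cobordism of Kuranishi spaces gives $c(\lambda',\phi)=c(\lambda',id)$.

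Finally I would compute $c(\lambda',id)$. When $\phi=id$ the fixed Reeb strings are simply the $\lambda'$-Reeb strings, and, as recorded after Definition~\ref{def:geodesicstring} and in Appendix~\ref{appendix:Fuller}, the charged elliptic count restricted to the window reduces to the Fuller index of the $\lambda'$-Reeb flow in class $\beta$ over periods in $[a_{N}-\epsilon,\,a_{N}+\epsilon]$. By $C^{1}$-stability of the Fuller index on a compact isolated set of periodic orbits (\cite{cite_FullerIndex, cite_SavelyevFuller}), this equals the same quantity for $\lambda$, i.e.\ the local Fuller index of the Morse--Bott component $N$; integrating the Lefschetz-type local index over $N$ (as in \cite{cite_SavelyevGromovFuller, cite_SavelyevFuller}) gives $\pm\chi(N)$ up to a positive rational multiplicity factor, which is non-zero precisely because $\chi(N)\neq 0$. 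Hence $c(\lambda',\phi)=c(\lambda',id)\neq 0$; a compact virtual dimension zero moduli space with non-zero count is non-empty, and by the correspondence of Section~\ref{sec:Definition of GWF} a point of it is exactly a class $\beta$ fixed Reeb string of $\phi$ (of period near $a_{N}$), which is what we wanted.
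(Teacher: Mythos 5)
Your proposal is correct and follows essentially the same route as the paper: the paper deduces $i(N,R^{\lambda},\beta)\neq 0$ from $\chi(N)\neq 0$ via the Morse--Bott computation, transfers this to a nearby open compact set for $\lambda'$ by $C^{1}$-stability of the Fuller index, and then invokes Theorem \ref{thm:basic0}, whose proof is precisely your windowed deformation argument along $\{\phi_t\}$ (compactness via the correspondence of Proposition \ref{prop:Topembedding} with $\lambda'$-Reeb orbits, cobordism invariance of the count, and the identification of the count at $id$ with the Fuller index via Theorem \ref{thm:GWFullerMain}). You have simply unpacked inline what the paper delegates to Theorem \ref{thm:basic0} and to the cited stability and Morse--Bott index results.
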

Both of the theorems above are actually special cases of the
next theorem proved in Section \ref{sec:Proofs of theorems on Conformal symplectic Weinstein conjecture}. For more details on the  Fuller index see Appendix
\ref{appendix:Fuller}. Let $\lambda $ be a contact form on
a closed manifold $C$, $N \subset \mathcal{O} (R ^{\lambda }, \beta )$ and let $i (N, R ^{\lambda}, \beta) \in
\mathbb{Q}  $ denote the Fuller
index.  For example, if $\lambda $ is Morse-Bott (see
~\cite{cite_FredericBourgeois})  and $N$ is a connected component of
$\mathcal{O} (R ^{\lambda}, \beta) $ then by a computation
in ~\cite [Section 2.1.1] {cite_SavelyevFuller} $i (R
^{\lambda }, N, \beta) \neq 0$  if $\chi (N) \neq 0$ (the
Euler characteristic). 


\begin{theorem}  \label{thm:basic0}
Let $(C, \lambda)$ be a contact manifold satisfying
the condition: $i (N, R ^{\lambda}, \beta) \neq 0$, for some open compact $N \subset \mathcal{O} (R ^{\lambda }, \beta) $. Then any strict contactomorphism $\phi: (C, \lambda) \to (C, \lambda)$, homotopic to the $id$ via strict contactomorphisms has
a fixed Reeb string $o$ in class $\beta$ and moreover $o \in N$. 
\end{theorem}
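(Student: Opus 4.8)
The plan is to build a locally conformally symplectic manifold naturally associated to the strict contactomorphism $\phi$, and then to count charged elliptic $J$-holomorphic curves in it, showing the resulting Gromov-Witten-Fuller invariant is governed by the Fuller index $i(N, R^\lambda, \beta)$. Concretely, starting from $(C,\lambda)$ and $\phi$ homotopic to the identity through strict contactomorphisms, I would form the mapping torus-type construction: on $C \times \mathbb{R}$ (or $C \times S^1$ after quotienting by the $\mathbb{Z}$-action generated by $\phi$ composed with a unit shift) one puts an lcs form whose Lee class detects the $\mathbb{R}$-direction, so that the contact form $\lambda$ and its Reeb dynamics appear as the ``boundary at infinity'' data. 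Fixed Reeb strings of $\phi$ in class $\beta$ correspond precisely to closed $J$-holomorphic elliptic curves (tori) in this lcs manifold that are ``charged'' in the sense that wrapping once around the $\mathbb{R}/S^1$ factor picks up the conformal factor — this is the same charged-curve/Reeb-orbit correspondence that underlies Theorem~\ref{thm:generalizationGWF} and the $\operatorname{GWF}$ invariant, here in its contact incarnation.

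The key steps, in order, are: (i) Construct the lcs manifold $(M_\phi, \Omega_\phi, \theta_\phi)$ from $(C,\lambda,\phi)$ and a compatible $J$ adapted to $\lambda$ and to a contact Hamiltonian generating the homotopy of $\phi$ to the identity; verify that charged elliptic $J$-curves asymptotic to / supported near the region $N$ are in bijective correspondence with charge-$1$ fixed Reeb strings of $\phi$ with underlying orbit in $N$. (ii) Establish the compactness of the relevant moduli space: because $N \subset \mathcal{O}(R^\lambda,\beta)$ is open and compact (hence a union of ``finite type'' pieces in the sense of the finite-type definitions above) and the action spectrum behaves well, Gromov-type compactness together with the Fuller-index setup in Appendix~\ref{appendix:Fuller} rules out bubbling and breaking that would escape $N$; one uses here that $C$ is closed and the Reeb flow complete. (iii) Build the virtual fundamental class on this compact virtual-dimension-zero Kuranishi space (with orbifold points coming from multiply covered orbits and the $S^1$-reparametrization), exactly as promised in Remark~\ref{remark:} and Section~\ref{sec:Definition of GWF}, and identify its evaluation against the fundamental class with the Fuller index $i(N, R^\lambda, \beta)$ — this is essentially the computation already recorded in \cite{cite_SavelyevFuller} relating such counts to Fuller's index, extended to the $\phi$-twisted situation. (iv) Invariance: since $\phi$ is homotopic to the identity through strict contactomorphisms, the lcs manifolds $M_\phi$ and $M_{\mathrm{id}}$ are deformation equivalent through lcs manifolds keeping the moduli compact, so $\operatorname{GWF}$-type count for $\phi$ equals that for the identity, which equals $i(N, R^\lambda, \beta) \neq 0$. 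Hence the moduli space is non-empty, producing a charge-$1$ fixed Reeb string $o$ of $\phi$ in class $\beta$ with $o \in N$; the ``moreover $o \in N$'' follows because the whole construction is localized near $N$ using that $N$ is open and compact.

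The main obstacle will be step (ii) together with the part of step (iii) that handles multiply covered and degenerate curves: one must show that the charged elliptic curves cannot break into configurations whose components drift away from the compact piece $N$ (using completeness of the Reeb flow, closedness of $C$, discreteness/boundedness of the relevant action window, and monotonicity of the conformal/symplectic area under the Lee form), and that the virtual count is well-defined and orbifold-rational despite the $S^1$-symmetry and Reeb multiple covers — this is where the Fuller index's rationality enters and where the technical heart of the paper lies. Once compactness and the VFC are in hand, the identification with the Fuller index and the homotopy-invariance argument are comparatively formal, mirroring the geodesic case of \cite{cite_SavelyevGromovFuller} and \cite{cite_SavelyevFuller}. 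I would also need to check the ``strict'' hypothesis is used essentially: it guarantees $\phi^*\lambda = \lambda$ so that the mapping-torus lcs structure is well-defined and the Reeb vector field is genuinely preserved, which is what makes ``fixed Reeb string'' the right notion rather than merely ``translated point.''
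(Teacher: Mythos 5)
Your proposal follows essentially the same route as the paper: form the mapping-torus lcs manifold of the homotopy from $id$ to $\phi$, identify charged elliptic $J$-curves with (generalized) Reeb tori and hence with fixed Reeb strings (Proposition \ref{prop:Topembedding}), compute the count at the lcs-fication of $(C,\lambda)$ as the Fuller index $i(N,R^{\lambda},\beta)$ (Theorem \ref{thm:GWFullerMain}), and transfer nonvanishing by cobordism invariance (Lemma \ref{prop:invariance1}), with compactness localized to $N$ exactly as the paper does via the proper embedding of the cobordism moduli space into $\mathcal{O}(R^{\lambda},\beta)\times[0,1]$. The only cosmetic difference is your ``boundary at infinity''/asymptotic phrasing, which is unnecessary here since the curves are closed tori in a closed mapping torus.
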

We have already mentioned that the index assumption of the
theorem holds for Morse-Bott contact forms $\lambda $, provided the Euler characteristic of some component of $N \subset
\mathcal{O} (R ^{\lambda }) $ is non-vanishing.   We may
take for instance the standard contact form $\lambda _{st}$
on $S ^{2k+1}$, the unit contangent bundle of the sphere, or see 
Bourgeois~\cite{cite_FredericBourgeois} for more examples.
In this Morse-Bott case the theorem may be verified by elementary considerations. 
To see this suppose we have a connected component $N \subset
\mathcal{O}  (R ^{\lambda}) $  with $\chi (N) \neq 0$.
Then $\phi$ as above induces a topological
endomorphism $\widetilde{\phi} $ of $N$ with non-zero
Lefschetz number, so that in this case the result follows by the Lefschetz fixed point theorem.  

In general a compact open component $N \subset \mathcal{O}
(R ^{\lambda}) $ may not be a finite simplicial complex, or
indeed any kind of topological space to which the classical
Lefschetz fixed point theorem may apply.  Also the relationship of $i (N, R ^{\lambda}, \beta) $ with $\chi (N) $ breaks down in
general as $i (N, R ^{\lambda}, \beta) $  is partly
sensitive to the dynamics of $R ^{\lambda }$.

The following is a variation of Theorem
\ref{theorem_pertubHyperbolic} in the absence of the
condition that $\beta $ be not a power, and removing all
assumptions on the metric except completeness.  This is proved in
Section \ref{sec:Proofs of theorems on Conformal symplectic Weinstein conjecture}.
\begin{theorem} \label{thm:basic1}
Let $X$ admit a complete metric with
a unique and non-degenerate geodesic string in class $ \beta \in
\pi _{1} ^{inc} (X)$.
Then one of the following alternatives holds:
\begin{enumerate}
\item Sky catastrophes for families of Reeb vector fields
exist, and the sky
catastrophe can be essential, see Definition \ref{def:bluesky}.
\label{alt:0}
\item  For any complete metric $g$ on $X$ and every isometry $\phi$
of $X,g$ homotopic through isometries to the identity,
$\phi$ has a charge 1 fixed geodesic string in class $\beta $.
\end{enumerate}
\end{theorem}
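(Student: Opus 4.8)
The plan is to reduce the statement to a dichotomy about the Fuller index of a circle family of Reeb orbits in the unit cotangent bundle, and then invoke Theorem \ref{thm:basic0} on the branch where the index is stable. First I would pass from the metric $g$ to the geodesic flow on the unit cotangent bundle $C = S^*X$ with its canonical contact form $\lambda_g$; the class $\beta \in \pi_1^{inc}(X)$ lifts to a class $\widetilde\beta$ of loops in $C$ (this is the lift referred to in Corollary \ref{cor:Riemannian}), and the unique non-degenerate geodesic string in class $\beta$ corresponds to a single non-degenerate Morse–Bott circle $N_0 \subset \mathcal{O}(R^{\lambda_g}, \widetilde\beta)$. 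Because this circle is isolated and non-degenerate, its Fuller index $i(N_0, R^{\lambda_g}, \widetilde\beta)$ is well defined, and a non-degenerate Morse–Bott circle contributes $\pm 1$ (or at worst a fixed nonzero rational coming from the isotropy of the underlying simple orbit) — in particular it is nonzero. So for the \emph{given} metric $g_0$ realizing the hypothesis, the index assumption of Theorem \ref{thm:basic0} is satisfied for $N = N_0$, hence every strict contactomorphism of $(C,\lambda_{g_0})$ homotopic to the identity has a fixed Reeb string in $N_0$, which descends to a charge $1$ fixed geodesic string of the corresponding isometry.

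The difficulty is that an \emph{arbitrary} other complete metric $g$ on $X$ need not have compact $S(g,\beta)$, so $\mathcal{O}(R^{\lambda_g},\widetilde\beta)$ need not contain any compact open piece on which Theorem \ref{thm:basic0} can be run directly. Here is where the sky catastrophe alternative enters. I would connect $g_0$ to $g$ by \emph{some} smooth path $\{g_t\}$ of complete metrics (e.g.\ a geodesic path in the space of Finsler metrics, or simply $g_t$ interpolating fiberwise norms); this induces a smooth family of Reeb vector fields $\{R^{\lambda_{g_t}}\}$ on $C$. Now apply the continuation/parametrized Fuller theory from Appendix \ref{appendix:Fuller}: either the isolated nonzero-index family $N_0$ at $t=0$ can be continued through the whole family $\{R^{\lambda_{g_t}}\}$ to a compact isolated piece $N_1$ at $t=1$ with the same (nonzero) Fuller index — in which case $(C,\lambda_{g_1}=\lambda_g)$ has $i(N_1, R^{\lambda_g},\widetilde\beta)\neq 0$ and Theorem \ref{thm:basic0} again gives the fixed Reeb string, hence alternative (2) — or the continuation fails, and the only way it can fail (orbits cannot simply disappear, by the period/action bounds and the index being a homotopy invariant of isolated compact pieces) is that orbits escape to infinity along the family, which is precisely the appearance of a sky catastrophe for the family $\{R^{\lambda_{g_t}}\}$ in the sense of Definition \ref{def:bluesky}; one then has to check that this escape is of the \emph{essential} type (the catastrophe cannot be removed by perturbing the path rel endpoints), which follows because the index obstruction $i(N_0)\neq 0$ is itself path-independent. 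That gives alternative (1).

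The main obstacle is making the ``continuation or catastrophe'' step rigorous: I need a version of the Fuller index that is defined and invariant for compact \emph{isolated} (open and closed) subsets of the orbit space of a vector field, functorial under homotopies of the vector field as long as no orbits in the relevant action window cross the boundary of a fixed isolating neighborhood, together with the statement that the \emph{only} failure mode, under the standing completeness and discreteness-of-action-spectrum hypotheses, is non-compactness coming from orbits leaving every compact set — i.e.\ a sky catastrophe. All of this is exactly the machinery set up in Appendix \ref{appendix:Fuller} and in \cite{cite_SavelyevFuller}, so I would phrase the argument as: (i) establish the index $i(N_0,R^{\lambda_{g_0}},\widetilde\beta)\neq 0$ from non-degeneracy; (ii) choose a path of metrics; (iii) run the parametrized Fuller index along the path, obtaining either a terminal isolated piece of nonzero index (then cite Theorem \ref{thm:basic0}) or a failure of isolation that by the structure theorem is an essential sky catastrophe (then invoke Definition \ref{def:bluesky}); (iv) translate the contact conclusion back to a charge $1$ fixed geodesic string via the standard correspondence between Reeb orbits on $S^*X$ and closed $g$-geodesics. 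Only step (iii) requires genuine care; the rest is bookkeeping with the already-developed dictionary between isometries, strict contactomorphisms of $(S^*X,\lambda_g)$, and the Gromov–Witten–Fuller count.
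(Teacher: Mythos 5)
Your argument is correct, but it takes a genuinely different route from the paper's. The paper never runs the Fuller continuation directly on the cotangent bundle: it transplants everything to holomorphic curves, concatenating the family of lcs-fications of the interpolating contact forms $\lambda_{g_t}$ with the family of mapping-torus structures for the isometry homotopy, gets the nonzero count at $t=0$ from Theorem \ref{thm:GWFullerMain}, applies Lemma \ref{thm:welldefined} to the concatenated family of admissible almost complex structures, converts a $t=1$ curve into a fixed Reeb string via Proposition \ref{prop:Topembedding}, and finally transfers the resulting essential holomorphic sky catastrophe to a Reeb one through the proper embedding of the moduli space into the orbit space. You instead factor the proof: the metric interpolation is handled purely by the parametrized Fuller index (the dichotomy ``open compact continuation versus right sky catastrophe'' is literally Definition \ref{def:bluesky} together with the invariance statement of Appendix \ref{appendix:Fuller}, since $\mathcal{O}(R^{\lambda_{g_0}},\widetilde{\beta})$ is a single point of nonzero index --- a point of the orbit space, not a Morse--Bott circle, by the way), and the isometry is handled by quoting Theorem \ref{thm:basic0} at the terminal contact form $\lambda_{g}$, so holomorphic curves enter only inside that black box. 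What each buys: the paper's route stays uniformly inside the Gromov--Witten framework and exhibits the catastrophe first at the holomorphic level, while yours is more modular and makes the essentialness transfer cleaner, since Definition \ref{def:essentialReebsky} quantifies over Reeb families only and you never need to match endpoint almost complex structures. One place to tighten: essentialness does not follow merely from ``the index obstruction is path-independent''; the precise argument is that if alternative (2) fails for the pair $(g,\phi)$, then any family of Reeb vector fields with the same endpoints and no right sky catastrophe would, by Fuller invariance, produce an open compact $N_1\subset\mathcal{O}(R^{\lambda_{g}},\widetilde{\beta})$ of nonzero index, whence Theorem \ref{thm:basic0} would give $\phi$ a charge $1$ fixed geodesic string in class $\beta$, a contradiction; so every such family has a catastrophe, which is exactly what essentialness demands.
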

\subsection{Conformal symplectic Weinstein conjecture} \label{sec_Conformal symplectic Weinstein conjecture}
We introduce in Section \ref{sec:preliminaries} certain
analogues of Reeb orbits for lcs manifolds. In particular,
we define a unifying concept of a Reeb 2-curve on which most
of the subsequent theory is based.
This leads us to state one analogue
in lcs geometry of the classical Weinstein conjecture, and
we discuss certain partial verifications.
We also state in this section an important
counterexample for a stronger, but also natural form of the lcs Weinstein conjecture.
\subsection{Organization} \label{sec_Organization}
The main theorems are proved in Section \ref{sec:Proofs of theorems on Conformal symplectic Weinstein conjecture}. 
Section \ref{sec:preliminaries} presents detailed preliminaries 
for lcs geometry, which should make this paper self
contained and accessible to a general reader.   Section
\ref{sec:gromov_witten_theory_of_the_lcs_c_times_s_1_}
defines the Gromov-Witten invariant $\operatorname {GWF} $,
which is central to the applications in Riemann-Finlser
geometry.

\section{Background and preliminaries} \label{sec:preliminaries}
\begin{definition}
A locally conformally symplectic manifold or just 
an $\lcs$ manifold, is a smooth $2n$-fold $M$ with an 
$\lcs$ structure: which is a non-degenerate 2-form 
$\omega$, with the property that for every $p \in M$ there is an open $U \ni p$ such that $\omega| _{U} = f _{U} \cdot \omega _{U} $, for some symplectic form $\omega _{U} $ defined on $U$ and some smooth positive function $f _{U} $ on $U$.
\end{definition}  
These kinds of structures were originally considered by Lee
in \cite{cite_Lee}, arising naturally as part of an abstract study of ``a kind of even dimensional Riemannian geometry'', and then further studied by
a number of authors see for instance, \cite{cite_BanyagaConformal} and
\cite{cite_VaismanConformal}.
An $\lcs$ manifold admits all the interesting classical
notions of a symplectic manifold, like Lagrangian
submanifolds and Hamiltonian dynamics, while at the same
time forming a much more flexible class. For example
Eliashberg and Murphy show that if a closed almost complex
$2n$-fold $M$ has $H ^{1} (M, \mathbb{R}) \neq 0 $ then it
admits a $\lcs$ structure,
\cite{cite_EliashbergMurphyMakingcobordisms}. 
Another result of Apostolov, Dloussky
\cite{cite_ApostolovStructures} is that any complex surface with an odd first Betti number admits a $\lcs$ structure, which tames the complex structure.  


To see the connection with the first cohomology 
group $H ^{1} (M, \mathbb{R} ^{} ) $, mentioned 
above, let us point out right away the most basic 
invariant of a $\lcs$ structure $\omega$, when $M$ 
has dimension at least 4. This is the Lee class, 
$\alpha = \alpha _{\omega}  \in H ^{1} (M, 
\mathbb{R}) $. This class has the property that on 
the associated $\alpha$-covering space (see proof of Lemma \ref{lemma:Reeb}) $\widetilde{M} $, the lift $\widetilde{\omega} $ 
is globally conformally symplectic. 
Thus, an $\lcs$ form is globally conformally 
symplectic, that is diffeomorphic to $e ^{f} \cdot 
\omega'$, with $\omega'$ symplectic, iff its Lee class vanishes.

Again assuming $M$ has dimension at least 4, the Lee class 
$\alpha$ has a natural differential form representative, 
called the Lee form, which is defined as follows.  We take
a cover of $M$ by open sets $U _{a} $ in which $\omega=
e ^{f _{a}} \cdot \omega _{a}   $ for $\omega _{a}  $ symplectic.
Then we have 1-forms $d (f _{a} )$ on each $U 
_{a} $, which glue to a well-defined closed 1-form 
on $M$, as shown by Lee.  We may denote this 1-form and its cohomology class both by $\alpha$. It is moreover immediate that for an $\lcs$ form $\omega$, $$d\omega= \alpha \wedge \omega,$$ for $\alpha$ the Lee form as defined above.

As we mentioned $\lcs$ manifolds can also be understood to generalize contact manifolds. This works as follows.
First we have a class of explicit examples of $\lcs$
manifolds, obtained by starting with a symplectic cobordism
(see \cite{cite_EliashbergMurphyMakingcobordisms}) of a closed contact manifold
$C$ to itself, arranging for the contact forms at the two
ends of the cobordism to be proportional and then gluing the
boundary components, (after a global conformal rescaling of the
form on the cobordism, to match the boundary conditions). 
\begin{terminology}
   \label{notation:contact} For us a contact 
   manifold is a pair $(C,\lambda) $ where $C$ is 
   a closed manifold and $\lambda$  a contact 
   form: $\forall p \in C: \lambda \wedge \lambda ^{2n} (p) \neq 0 $. This is not a completely common 
   terminology as usually it is the equivalence 
   class of $(C,\lambda) $ that is called a 
   contact manifold, where $(C,\lambda) \sim (C, 
   \lambda') $  if $\lambda=f \lambda'$ for $f$  
   a positive function. (Given that the contact structure, in 
   the classical sense, is co-oriented.)  A 
   \textbf{\emph{contactomorphism}}  between $(C 
   _{1}, \lambda _{1}) $, $(C _{2}, \lambda _{2}) 
   $  is a diffeomorphism $\phi: C _{1} 
   \to C _{2} $ s.t. $\phi ^{*} \lambda _{2} = f 
   \lambda _{1}$ for some $f>0$. It is called 
   \textbf{\emph{strict}}  if $\phi ^{*} \lambda  _{2} = \lambda _{1}$. 
\end{terminology}
A concrete basic example, which can be understood as a special
case of the above cobordism construction, is the following. 
\begin{example} [Banyaga] \label{example:banyaga} 
Let $(C, \lambda) $ be a contact manifold, $S ^{1}
= \mathbb{R} / \mathbb{Z}   $, $d \theta $ the
standard non-degenerate 1-form on $S ^{1}$ satisfying $\int _{S ^{1}} d \theta = 1$. 
And take $M=C \times S ^{1}  $ with the 2-form $$\omega _{\lambda} = d_{\alpha} \lambda : = d \lambda - \alpha \wedge \lambda,$$ for $\alpha: = pr _{S ^{1} } ^{*} d\theta   $, $pr _{S ^{1} }: C \times S ^{1} \to S ^{1}  $ the projection,  and $\lambda$ likewise the pull-back of $\lambda$ by the projection $C \times S ^{1} \to C $. We call $(M,\omega _{\lambda} )$  as above the \textbf{\emph{lcs-fication}} of $(C,\lambda)$.
This is also a basic example of a first kind lcs 
manifold, as in Definition \ref{def:firstkind} ahead.
\end{example}

   %

The operator 
\begin{equation}
   \label{eq:Lichnerowicz}
  d_{\alpha}: \Omega ^{k} (M) \to \Omega ^{k+1} (M) 
\end{equation}
is called the Lichnerowicz differential with respect to a closed 1-form $\alpha$,
and it satisfies $d_{\alpha} \circ d_{\alpha} =0  $ so that we have an associated Lichnerowicz chain complex.

\begin{definition}\label{def:exactlcs}
An \textbf{\emph{exact lcs form}} on $M$  is an lcs 2-form 
s.t. there exists a pair of one forms $(\lambda, \alpha)$
with $\alpha$ a closed 1-form, s.t. $\omega=d_{\alpha}
\lambda $ is non-degenerate.  In the case above we also call
the pair $(\lambda, \alpha)$ \textbf{\emph{an exact lcs structure}}. The
triple $(M, \lambda, \alpha ) $ will be called an \textbf{\emph{exact lcs
manifold}}, but we may also call $(M, \omega ) $ an exact lcs
manifold when $(\lambda, \alpha ) $ are implicit.
\end{definition}
An exact lcs structure determines a generalized distribution
$\mathcal{V} _{\lambda} $ on $M$: $$\mathcal{V} _{\lambda}
(p) = \{v \in T _{p} {M} \,|\,  d \lambda (v, \cdot) = 0 \},     
$$ which we call the \textbf{\emph{vanishing distribution}}. 
We also define a generalized distribution $\xi _{\lambda}
$ that is the $\omega$-orthogonal complement to $\mathcal{V} _{\lambda}$, which we call \textbf{\emph{co-vanishing distribution}}. For each $p \in M$, $\mathcal{V} _ {\lambda} (p) $  has dimension at most 2 since $d\lambda - \alpha \wedge \lambda$ is non-degenerate. If $M ^{2n} $ is closed $\mathcal{V} _{\lambda} $ cannot identically vanish since $(d\lambda) ^{n} $ cannot be non-degenerate by Stokes theorem.

\begin{definition} \label{def:firstkind} Let $(\lambda,
\alpha) $ be an exact lcs structure on $M$. We call 
$\alpha$ integral, rational or irrational if its periods are
integral, respectively rational, or respectively irrational.
We call the structure $(\lambda, \alpha)
$  \textbf{\emph{scale integral}}, if $c \alpha$ is integral for some $0 \neq c \in \mathbb{R}$.  
Otherwise we call the structure \textbf{\emph{scale irrational}}. 
If $\mathcal{V} $ is non-zero at each point of $M$, 
in particular is a smooth 2-distribution, then such
a structure is called \textbf{\emph{first kind}}.
If $\omega$ is an exact lcs form then we call $\omega
$  integral, rational, irrational, first kind if the exists $\lambda,
\alpha $ s.t. $\omega = d _{\alpha } \lambda $ and
$(\lambda, \alpha ) $ is integral, respectively irrational,
respectively first kind. Similarly define, scale integral,
scale irrational $\omega$. 
\end{definition} 

\begin{definition}\label{def:morphisms} A 
\textbf{\emph{conformal symplectomorphism}} 
of lcs manifolds $\phi: (M _{1}, \omega _{1}) \to (M _{2}, \omega _{2}) $ is a diffeomorphism $\phi$ s.t. $\phi ^{*} 
\omega _{2} = e ^{f} \omega _{1}$, for some $f$.  Note that in this case we have
an induced relation (when $M$ has dimension at least 4): $$\phi ^{*} \alpha _{1} = \alpha _{0}
+ df, $$ where $\alpha  _{1}$ is the Lee form of $\omega
_{1}$ and $\alpha _{0}$ is the Lee form of $\omega _{0}$.
If $f=0$ we call $\phi
$ a \textbf{\emph{symplectomorphism}}. 
A (conformal) symplectomorphism of exact lcs structures
$(\lambda _{1}, \alpha _{1}) $, $(\lambda _{2}, \alpha _{2})
$ on $M _{1}$ respectively $M _{2}$ is a (conformal) symplectomorphism of the corresponding
$lcs$ 2-forms. If a diffeomorphism $\phi: M _{1} \to M _{2}
$ satisfies $\phi ^{*} \lambda _{2} = \lambda _{1}$ and
$\phi ^{*} \alpha _{2} = \alpha _{1}$  we call it an
\textbf{\emph{isomorphism}} of the exact lcs structures.
This is analogous to a strict contactomorphism of contact
manifolds.
\end{definition}
To summarize, with the above notions we have the following
basic points whose proof is left to the reader:
\begin{enumerate}
	\item An isomorphism of exact lcs structures
$(\lambda _{1}, \alpha _{1}) $, $(\lambda _{2}, \alpha _{2})
$ preserves the first kind condition, and moreover preserves
the corresponding vanishing distributions.
\item A symplectomorphism of lcs forms preserves
the first kind condition.  \label{item:preserves}
\item A (conformal) symplectomorphism of exact lcs
structures generally does not preserve the first kind
condition.  (Contrast with \ref{item:preserves}.) 
\item A (conformal) symplectomorphism of first kind lcs
structures generally does not preserve the vanishing
distributions. (Similar to a contactomorphism not preserving Reeb distributions.) 
\item A conformal symplectomorphism of lcs forms and exact
lcs structures preserves the rationality, integrality, scale
integrality conditions.
\end{enumerate}
\begin{remark}
   \label{remark:}
We say that $\omega _{0} $ is conformally equivalent to
$\omega _{1}$ if $\omega _{1}= e ^{f} \omega _{0}$, i.e. the identity map is a conformal symplectomorphism $id: (M,
\omega _{0}) \to (M, \omega _{1}) $.
It is important to note that for us the form $\omega$ is the
structure not its conformal equivalence class, as for some
authors. In other words conformally equivalent structures on a given manifold determine distinct but isomorphic objects of the category, whose objects are lcs manifolds 
and morphisms conformal symplectomorphisms.  
\end{remark}


\begin{example} \label{example:mappingtorus}
One example of an $\lcs$ structure of the 
first kind is a mapping torus of a strict
contactomorphism, see Banyaga~\cite{cite_BanyagaConformal}.
The mapping tori $M _{\phi,c}$ of a strict contactomorphism
$\phi$ of $(C, \lambda )$ fiber over $S
^{1}$,  $$\pi _{}: C \hookrightarrow  M _{\phi,c} \to
S ^{1},$$  with Lee form of the type $\alpha =c\pi ^{*}(d
\theta)$, for some $0 \neq c \in \mathbb{R} ^{} $. In particular, these are scale integral first kind lcs structures.
\end{example}
Moreover we have:
\begin{theorem} [Only reformulating
Bazzoni-Marrero~\cite{cite_BazzoniFirstKind}] \label{thm:firstkindtorus}
A first kind lcs structure $(\lambda, \alpha)$ on a closed
manifold $M$ is isomorphic to the mapping torus of a strict
contactomorphism if and only if it is scale integral.
\end{theorem}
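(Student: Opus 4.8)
The plan is to prove both directions of the equivalence, the ``only if'' direction being essentially immediate from Example \ref{example:mappingtorus}. If $(\lambda,\alpha)$ is isomorphic to the mapping torus $M_{\phi,c}$ of a strict contactomorphism, then by that example its Lee form is $c\,\pi^*(d\theta)$, whose periods lie in $c\mathbb{Z}$, hence $c^{-1}\alpha$ is integral and the structure is scale integral; since an isomorphism of exact lcs structures pulls back Lee forms to Lee forms, scale integrality is preserved, so $(\lambda,\alpha)$ itself is scale integral.

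For the substantive ``if'' direction, suppose $(\lambda,\alpha)$ is first kind and scale integral. After rescaling I may assume $\alpha$ is integral and primitive, i.e.\ $[\alpha] \in H^1(M,\mathbb{Z})$ is not a proper multiple, so that $\alpha$ represents a submersion $\pi\colon M \to S^1=\mathbb{R}/\mathbb{Z}$ with $\pi^*(d\theta) = \alpha$ (using that $\alpha$ is closed and nowhere vanishing --- nowhere vanishing because otherwise $d\lambda - \alpha\wedge\lambda$ would degenerate where $\alpha = 0$ against the rank-$\leq 2$ analysis of $\mathcal{V}_\lambda$; this point needs a small argument). The fibers $C = \pi^{-1}(\mathrm{pt})$ are closed codimension-one submanifolds. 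The key computation is that $\lambda$ restricts to a contact form on each fiber $C$: on $C$ we have $\alpha|_C = 0$, so $d_\alpha\lambda|_C = d\lambda|_C$, and the first-kind hypothesis says $\mathcal{V}_\lambda$ is a smooth rank-$2$ distribution everywhere; one checks that $\mathcal{V}_\lambda$ is transverse to the fibers (it must contain a vector field mapping to $\partial_\theta$, since $d\lambda$ alone cannot be symplectic on the even-dimensional... wait, $C$ is odd-dimensional --- rather, $\mathcal{V}_\lambda \cap TC$ has rank $\leq 1$ and feeds a Reeb-type field), so $\ker(d\lambda|_C)$ is one-dimensional and $\lambda|_C \wedge (d\lambda|_C)^n \neq 0$, i.e.\ $(C,\lambda|_C)$ is a contact manifold. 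Let $R$ be the contact Reeb field on $C$; it extends to the $\mathcal{V}_\lambda$-generator $Z$ on $M$ normalized by $\alpha(Z) = 1$.

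Next I would produce the contactomorphism: flowing along $Z$ for time $1$ gives a return map $\phi\colon C \to C$ (here I use that the flow of $Z$ is complete --- automatic since $M$ is closed --- and crosses each fiber once per unit time because $\alpha(Z)=1$ and $\pi^*d\theta = \alpha$). One verifies $\phi^*(\lambda|_C) = \lambda|_C$ by showing the Lie derivative $\mathcal{L}_Z\lambda$ vanishes along $C$ in the fiber directions: from $d(d_\alpha\lambda) = 0$ and $\iota_Z\omega = 0$ (the defining property of $\mathcal{V}_\lambda$, after the $\omega$-orthogonal normalization) one gets $\mathcal{L}_Z\lambda = d(\lambda(Z)) + $ (terms in $\alpha$), and along the fibers this is exact in the appropriate sense; this is precisely the computation in Bazzoni--Marrero that the theorem is reformulating, so I would cite it rather than redo it. Finally, $M$ with its exact lcs structure is then reconstructed as the mapping torus $M_{\phi,1}$: the map $M \to M_{\phi,1}$ sending a point to its ``time-mod-$1$'' position on the $Z$-flow together with the fiber coordinate is a diffeomorphism intertwining the structures, and under it $\lambda$ and $\alpha = \pi^*d\theta$ match the mapping-torus data, giving an isomorphism of exact lcs structures.

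The main obstacle is the middle step: cleanly establishing that $\lambda$ restricts to a contact form on the fiber and that the $\mathcal{V}_\lambda$-flow is a well-behaved $\lambda$-preserving return map, i.e.\ correctly bookkeeping how the first-kind condition on $M$ interacts with the codimension-one foliation by fibers of $\pi$. This is where the rank-$2$ structure of $\mathcal{V}_\lambda$, the non-vanishing of $\alpha$, and the identity $d\omega = \alpha\wedge\omega$ all have to be used together, and it is the content genuinely borrowed from Bazzoni--Marrero; the rest is standard mapping-torus formalism. A secondary (minor) obstacle is the reduction to primitive integral $\alpha$: if $[\alpha] = k[\alpha_0]$ with $\alpha_0$ primitive one works with $\alpha_0$, and the Reeb return time becomes $1/k$ or the fiber changes, but this only rescales $c$ and does not affect the conclusion.
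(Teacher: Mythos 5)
First, note that the paper itself offers no proof of this statement: it is attributed outright to Bazzoni--Marrero as a reformulation, so the only meaningful comparison is with their mapping-torus construction, which is indeed what you are sketching. Your overall route is the right one (the ``only if'' direction via Example \ref{example:mappingtorus} plus the fact that an isomorphism of exact lcs structures matches the forms $\alpha$ exactly; the ``if'' direction via the classifying map of a primitive rescaling of $\alpha$, contactness of the fibers, and a return map of a canonical flow), and your ``small argument'' that $\alpha$ is nowhere zero is fine: at a zero of $\alpha$, any $0\neq v\in\mathcal{V}_{\lambda}$ would satisfy $\iota_{v}\omega=\lambda(v)\,\alpha-\alpha(v)\,\lambda=0$, contradicting non-degeneracy, and first kind gives $\mathcal{V}_{\lambda}\neq 0$.

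However, the middle step contains two concrete misidentifications. Under the first kind hypothesis one has $\mathcal{V}_{\lambda}=\mathcal{D}=\operatorname{span}(X_{\lambda},X_{\alpha})$, and non-degeneracy forces the identities $\lambda(X_{\alpha})=1$, $\alpha(X_{\alpha})=0$, $\lambda(X_{\lambda})=0$, $\alpha(X_{\lambda})=-1$, $\iota_{X_{\alpha}}d\lambda=\iota_{X_{\lambda}}d\lambda=0$. Hence the Reeb field of $(C,\lambda|_{C})$ is $X_{\alpha}$, which is \emph{tangent} to the fibers ($\alpha(X_{\alpha})=0$); it does not extend to your field $Z$ with $\alpha(Z)=1$. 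The field you must flow is $Z=-X_{\lambda}$, singled out inside the rank-two distribution $\mathcal{D}$ by $\alpha(Z)=1$ \emph{and} $\lambda(Z)=0$; your normalization $\alpha(Z)=1$ alone does not pin $Z$ down, and the natural reading of ``extension of the Reeb field'' ($Z=X_{\alpha}-X_{\lambda}$) still preserves $\lambda$ but has $\lambda(Z)=1$, so the flow trivialization would carry $\lambda$ to a form with a spurious $d\theta$-component and the final identification would not intertwine the structure with the mapping-torus one. Second, ``$\iota_{Z}\omega=0$'' is not the defining property of $\mathcal{V}_{\lambda}$ (by non-degeneracy it would force $Z=0$); the property is $\iota_{Z}d\lambda=0$, and with it the invariance you defer to Bazzoni--Marrero is a one-line Cartan computation, $\mathcal{L}_{Z}\lambda=\iota_{Z}d\lambda+d(\lambda(Z))=0$ and likewise $\mathcal{L}_{Z}\alpha=0$, so no citation is needed for that step once the correct $Z$ is chosen. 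With these repairs (and the observation that $\omega|_{C}=d(\lambda|_{C})$ has one-dimensional kernel spanned by $X_{\alpha}$, on which $\lambda=1$, giving contactness of the fibers), your argument is a correct rendering of the Bazzoni--Marrero proof that the paper merely cites.
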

The (scaled) integrality condition is of course necessary
since the Lee form of a mapping torus of a strict
contactomorphism will have this property.  Thus we may
understand  scale irrational
first kind lcs structures as first (and rather dramatic)
departures from the world of contact manifolds into a brave
new lcs world.
\begin{remark} \label{remark:NonTorus}
Note that scale irrational first kind structures certainly exist.
A simple example is given by taking $\lambda, \alpha  $ to
be closed scale irrational 1-forms on $T ^{2}$ with transverse
kernels. Then $\omega = \lambda \wedge \alpha $ is a
scale irrational first kind structure on $T ^{2}$. In particular $(\lambda, \alpha ) $   cannot be a mapping torus of a strict contactomorphism even up to a conformal symplectomorphism.
In general, on a closed manifold we may always perturb
a (first kind) scale integral lcs structure
to a (first kind) scale irrational one.  The examples of the present paper deal with deformations of this sort.
\end{remark}

\subsection{Reeb 2-curves} \label{sec:highergenus}
\begin{definition}\label{def:canonicalDistribution}
Let $(M,\lambda, \alpha)$ be an exact $\lcs$ 
structure and $\omega= d _{\alpha} \lambda$.
Define $X _{\lambda}$ by $\omega (X _{\lambda}, 
\cdot) = \lambda$ and $X _{\alpha} $ by $\omega (X 
_{\alpha}, \cdot) = \alpha$. Let $\mathcal{D} $ 
denote the (generalized) distribution spanned by 
$X _{\alpha }, X _{\lambda }$, meaning $\mathcal{D} 
(p) := \spann (X _{\alpha} (p)  , X _{\lambda 
}(p)) $.  This will be 
called the \textbf{\emph{canonical distribution}}.
\end{definition}
The (generalized) distribution $\mathcal{D} $ is one analogue 
for exact lcs manifolds of the Reeb distribution on 
contact manifolds. A Reeb 2-curve, as defined 
ahead, will be a certain kind of singular leaf of 
$\mathcal{D} $, and so is a kind of 2-dimensional 
analogue of a Reeb orbit.
\begin{example} \label{example:} The simplest example of
a Reeb 2-curve in an exact lcs $(M, \lambda, \alpha ) $, in the case $\mathcal{D} $ is a true 2-dimensional distribution (for example if $(\lambda, \alpha ) $ is first kind), is a closed immersed surface $u: \Sigma \to M$
tangent to $\mathcal{D} $. However, it will be necessary to
consider more generalized curves.
\end{example}

\begin {definition} \label{def:Reeb2curve}
Let $\Sigma$ be a closed nodal Riemann surface (the set 
of nodes can be empty). Let $u: \Sigma \to M$ be a smooth map and let $\widetilde{u}: \widetilde{\Sigma} \to M $ be its normalization 
(see Definition \ref{def:normalization}).
We say that $u$ is a \textbf{\emph{Reeb 2-curve}} 
in $(M,\lambda,\alpha)$, if the following is 
satisfied:
\begin{enumerate}
   \item For each $z \in \widetilde{\Sigma}$,
   $\widetilde{u}  _{*}  (T _{z} 
   \widetilde{\Sigma }  ) = \mathcal{D} 
   (\widetilde{u} (z) ) $, whenever $d \widetilde{u}  (z):
	 T _{z} \Sigma  \to T _{\widetilde{u} (z) }M $ is
	 non-zero, and $\dim \mathcal{D} ({\widetilde{u} (z) }) 
   =2 $. 
   \item $0 \neq [u ^{*} \alpha] \in H ^{1} 
   (\Sigma, \mathbb{R} ^{} ) $.  
\end{enumerate}
\end{definition}
It is tempting to conjecture that every closed exact lcs
manifold has a Reeb 2-curve, in analogy to the Weinstein
conjecture. However this is false:
\begin{theorem} \label{theorem_counterexample}
Let $T ^{2}, g _{st}$ be the 2-torus with its standard flat metric. Let $M _{\widetilde{\phi} ,1}$ be the mapping torus of the unit contangent bundle of $T ^{2}$, with $\widetilde{\phi
} $ corresponding to an isometry $\phi: (T ^{2}, g _{st}) \to
(T ^{2}, g _{st})$, which does not fix the image of any closed
geodesic (an irrational rotation in both coordinates).  Then $M _{\phi,1}$ has no Reeb 2-curves. 
\end{theorem}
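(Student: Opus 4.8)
The plan is to analyze the canonical distribution $\mathcal{D}$ on $M = M_{\widetilde\phi,1}$ very explicitly, using the fact that the lcs-fication / mapping-torus structure over $S^1$ lets us read off $X_\lambda$ and $X_\alpha$ in terms of the Reeb field on the unit cotangent bundle $C = ST^*T^2$ and the $S^1$-direction. First I would fix coordinates: write $C = ST^*T^2$ with its standard contact form $\lambda_0$, whose Reeb flow is the geodesic flow of the flat torus; concretely $ST^*T^2 \cong T^2 \times S^1$ (momentum angle $\psi$), and the geodesics in class $\beta$ are the straight lines of rational slope, a Morse-Bott $S^1$-family for each primitive homology class. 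The mapping torus $M_{\widetilde\phi,1}$ is then $(C \times \mathbb{R})/\mathbb{Z}$ where the gluing is by $\widetilde\phi$, a lift of the irrational rotation $\phi$ of $T^2$; since $\phi$ is an isometry it lifts canonically to a strict contactomorphism $\widetilde\phi$ of $(C,\lambda_0)$, and $\omega_\lambda = d_\alpha\lambda$ with $\alpha = c\,pr_{S^1}^*d\theta$ as in Example~\ref{example:mappingtorus} (here $c=1$). From Definition~\ref{def:canonicalDistribution}, $X_\lambda$ spans the Reeb direction of $\lambda_0$ (the geodesic flow direction) and $X_\alpha$ spans the $S^1$-mapping-torus direction, so $\mathcal{D}$ is the rank-2 distribution spanned by the geodesic-flow vector field and the "suspension" vector field of $\widetilde\phi$.

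Next I would suppose, for contradiction, that there is a Reeb 2-curve $u\colon\Sigma\to M$. Condition (2) of Definition~\ref{def:Reeb2curve}, $0\neq[u^*\alpha]\in H^1(\Sigma,\mathbb{R})$, forces $u$ to be non-constant and to wrap nontrivially around the $S^1$ base; in particular the normalization $\widetilde u$ has a component on which the pullback of the $S^1$-direction is nonzero, so on that component $\widetilde u$ is an immersion tangent to $\mathcal{D}$ by condition (1) (the distribution being honestly rank $2$ here since the structure is first kind). The key dynamical observation is then: a surface tangent to $\mathcal{D}=\spann(\text{geodesic flow},\ \text{suspension of }\widetilde\phi)$, when intersected with a fiber $C\times\{t\}$ and projected to $T^2$, produces a $1$-parameter family of geodesics that is invariant under $\phi$; more precisely, following the $X_\alpha$-direction around the base circle returns you, after time $1$, to the same leaf but transported by $\widetilde\phi$, while following $X_\lambda$ keeps you inside a fixed geodesic. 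Closing up the surface $\Sigma$ (it is compact) forces the image in $C$ of some $\lambda_0$-Reeb orbit — i.e. the image of some closed geodesic of $(T^2,g_{st})$ — to be invariant under some power $\widetilde\phi^n$, equivalently $\phi^n$ fixes the image of a closed geodesic. Since $\phi$ is an irrational rotation in both coordinates, no power of $\phi$ fixes the image of any closed geodesic (a line of rational slope is translated off itself), a contradiction.

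The step I expect to be the main obstacle is making the "closing-up forces an invariant Reeb orbit" argument rigorous in the presence of nodes and of the locus where $d\widetilde u$ vanishes or $\dim\mathcal{D}<2$ — a priori $u$ could be constant on some components and the tangency condition is only imposed where $d\widetilde u\neq 0$. I would handle this by arguing that on each non-constant component $\widetilde u$ integrates $\mathcal{D}$ and hence, projecting to the $S^1$-base, is either constant (contradicting that this component carries the $\alpha$-cohomology) or a covering of $S^1$; on such a component, the holonomy of the foliation by $\mathcal{D}$ around the base, which is exactly conjugation by $\widetilde\phi^n$ for the appropriate wrapping number $n$, must fix the leaf, and the leaf's intersection with a fiber is (the closure of) a geodesic-flow trajectory, i.e. a closed geodesic's unit cotangent lift since $\Sigma$ is compact. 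Care is also needed that $[u^*\alpha]\neq 0$ genuinely propagates to at least one normalized component with nonzero wrapping, which follows from the Mayer–Vietoris/normalization sequence relating $H^1(\Sigma)$ and $H^1(\widetilde\Sigma)$ together with $\alpha$ being closed. Once the invariant-closed-geodesic conclusion is reached, the contradiction with the irrational rotation is immediate.
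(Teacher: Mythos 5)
Your proposal is correct and follows essentially the same route as the paper: where you re-derive by a direct holonomy/foliation analysis that a closed surface tangent to $\mathcal{D}$ and wrapping the $S^1$-base forces a closed Reeb orbit (lift of a closed geodesic) whose image is fixed by $\widetilde{\phi}^{\,n}$, the paper simply cites part one of Proposition \ref{prop:Topembedding} (every charge $n$ Reeb 2-curve factors through a generalized Reeb torus $u^n_o$), whose proof via Proposition \ref{prop:abstractmomentmap} and Lemma \ref{lemma:ReebCurveRational} (Sard's theorem, normalization, propagation of $[u^*\alpha]\neq 0$) is exactly the rigorous treatment of the nodal/degenerate issues you flag as the main obstacle. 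One caution: your final justification that ``a line of rational slope is translated off itself'' by an irrational rotation in both coordinates is not automatic (translation by $(\sqrt{2},\sqrt{2})$ preserves every slope-one circle), so the contradiction should be run against the operative hypothesis that $\phi$ has no charge $n$ fixed geodesic string for any $n$ -- the same reading the paper itself uses.
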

Nevertheless, note that for the counterexample above, the
conformal symplectic Weinstein conjecture as described in
the following section readily holds, by Proposition
\ref{lemma_MappingTorusReeb1curve}.

\section{Results on Reeb 2-curves and a conformal symplectic
Weinstein conjecture} \label{sec:Results on Reeb 2-curves}
\begin{definition}\label{def:LM}
  Define \textbf{\emph{the set $\mathcal{L} (M)$  of exact $\lcs$ 
  structures on $M$}},  to be:
  \begin{equation*}
     \mathcal{L} (M) = \{ (\beta, \gamma) \in 
     \Omega ^{1} (M) \times  \Omega ^{1} (M) 
     \,|\, \text{$\gamma$ is closed, $d _{\gamma}  \beta$ is 
     non-degenerate} \}. 
  \end{equation*}
  Define $\mathcal{F} (M) \subset \mathcal{L} (M) 
  $ to be subset of (possibly irrational) first kind lcs structures.
  \end{definition}
In what follows we use the following $C ^{\infty}$ 
metric on $\mathcal{L} (M) $.  For $(\lambda _{1}, \alpha _{1}), (\lambda _{2}, \alpha _{2}) \in \mathcal{L} (M)$ define:
\begin{equation} \label{eq:dk}
   d _{{\infty}}  ((\lambda _{1}, \alpha _{1}), (\lambda
	 _{2}, \alpha _{2})) = d _{C ^{\infty }} (\lambda _{1},
	 \lambda _{2}  )  + d _{C ^{\infty}} (\alpha _{1}, \alpha _{2}  ),
\end{equation}
where $d _{C ^{\infty}} $ on the right side is the usual $C
^{\infty} $ metric. 

The following theorems  are proved in Section 
\ref{sec:Proofs of theorems on Conformal symplectic 
Weinstein conjecture}, based on the theory 
of elliptic  pseudo-holomorphic curves in $M$. We can use $C
^{k}$  metrics  for a certain $k$, instead of $C ^{\infty
}$,
however we cannot make $k=0$ (at least not obviously), and
the extra complexity of working with $C ^{k}$ metrics, is
better left for later developments.
\begin{theorem} \label{thm:C0Weinstein} 
Let $(C, \lambda) $ be a closed contact manifold, satisfying one of
the following conditions:
\begin{enumerate}
	\item $(C, \lambda ) $  has at least one non-degenerate Reeb orbit.
	\item $i (N,R ^{\lambda}, \beta) 
\neq 0$ where the latter is the Fuller index of 
some open compact subset of the orbit space: $N \subset \mathcal{O} (R ^{\lambda}, \beta ) $,  see Appendix \ref{appendix:Fuller}.
\end{enumerate}
Then we have the following: 
\begin{enumerate}
   \item Then for some $d _{\infty} $ neighborhood $U$ of the $\lcs$-fication 
$(\lambda, \alpha)$ of the space $\mathcal{F} (M=C 
\times S ^{1}) $, every element of $U$ admits a Reeb
2-curve.
\item  For any $(\lambda', \alpha' ) \in U$, the corresponding Reeb 2-curve $u: \Sigma \to M$ can be assumed to be \textbf{\emph{elliptic}} meaning that $\Sigma $ is elliptic (more specifically: a nodal, topological genus 1,  
closed, connected Riemann surface).
\item $u$ can also be assumed to be $\alpha$-charge 1 (see 
Definition \ref{def:charge}).
\item  If $M$  has dimension 4 then $u$  
can be assumed to be embedded and normal 
(the set of nodes is empty). And so in particular, such a $u$ 
represents a closed, $(\omega = d _{\alpha } \lambda)
$-symplectic torus hypersurface.
\end{enumerate}
\end{theorem}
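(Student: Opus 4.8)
The proof will exhibit the Reeb 2-curve as a $\widetilde J$-holomorphic elliptic curve for a suitable almost complex structure $J$ compatible with $\omega_\lambda = d_\alpha \lambda$ on $M = C \times S^1$, and then deform. The starting point is the observation that in the lcs-fication of $(C,\lambda)$ the canonical distribution $\mathcal D = \spann(X_\alpha, X_\lambda)$ is spanned by $\partial_\theta$ (the $S^1$-direction) and the Reeb vector field $R^\lambda$ on $C$; a leaf of $\mathcal D$ is thus a $2$-torus swept out by a closed Reeb orbit crossed with $S^1$, or — once we allow nodal degenerations — a nodal torus fibered appropriately over a Reeb orbit. So for the model structure $(\lambda,\alpha)$ the Reeb 2-curves are exactly the mapping-torus-type surfaces over closed Reeb orbits $o \in S(R^\lambda, \beta)$, and condition (2) of Definition \ref{def:Reeb2curve}, $0 \neq [u^*\alpha]$, holds because $u$ covers the $S^1$-factor.

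\smallskip
\noindent
First I would set up the relevant moduli space: fix $J$ tamed by $\omega_\lambda$ and adapted to $\mathcal D$ (so that $\mathcal D$ is $J$-invariant and the $J$ on $\mathcal D$ rotates $\partial_\theta$ into (a rescaling of) $R^\lambda$), and consider the space of $J$-holomorphic maps $u\colon \Sigma \to M$ from genus-$1$ nodal curves with $[u^*\alpha] = 1 \in H^1(\Sigma,\Z)$, i.e. $\alpha$-charge $1$ in the sense of Definition \ref{def:charge}. For the model $(\lambda,\alpha)$ this moduli space is identified, via the product structure, with (a thickening of) $\mathcal O(R^\lambda,\beta)$ together with the standard torus/nodal-torus families over each orbit; this is where hypotheses (1) and (2) on $(C,\lambda)$ enter. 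Under hypothesis (1) a non-degenerate Reeb orbit gives a transversally cut-out, and hence persistent, elliptic Reeb 2-curve directly. Under hypothesis (2) the nonvanishing Fuller index $i(N,R^\lambda,\beta)$ of an open compact $N \subset \mathcal O(R^\lambda,\beta)$ is exactly the obstruction-theoretic count (via the virtual fundamental class / Kuranishi machinery referenced in Remark \ref{remark:}) guaranteeing that the corresponding component of the moduli space of elliptic Reeb 2-curves is nonempty and survives perturbation; this is the content of the companion Fuller-index computations of \cite{cite_SavelyevFuller}.

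\smallskip
\noindent
Next comes the deformation step, which is the crux. Given $(\lambda',\alpha') \in \mathcal F(M)$ close to $(\lambda,\alpha)$ in $d_\infty$, choose $\omega_{\lambda'} = d_{\alpha'}\lambda'$ and a compatible $J'$ close to $J$; by Gromov compactness for elliptic curves — and using that the Reeb 2-curves of the model are $J$-holomorphic of a fixed energy and fixed homology class, so the relevant moduli space is compact modulo nodal degeneration, which we have already allowed into the definition — the nonemptiness (counted with the virtual class / Fuller index) is an open and deformation-invariant condition. Hence for $(\lambda',\alpha')$ in a $d_\infty$-neighborhood $U$ the perturbed moduli space of charge-$1$ elliptic $J'$-holomorphic Reeb 2-curves is still nonempty, proving items (1)--(3). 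For item (4), when $\dim M = 4$ the curve is a (possibly nodal) genus-$1$ $J'$-holomorphic curve in a $4$-manifold; positivity of intersections and the adjunction formula force a generic such curve in the relevant class to be embedded with no nodes (the nodal strata are the ones with lower arithmetic genus contributions, excluded by adjunction once the homology class and genus are pinned down), and a $J'$-holomorphic surface in a $4$-manifold tamed by $\omega_{\lambda'}$ is automatically $\omega_{\lambda'}$-symplectic, giving the symplectic torus hypersurface.

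\smallskip
\noindent
The main obstacle I anticipate is establishing transversality/compactness for the elliptic moduli problem robustly enough that ``Reeb 2-curve exists'' is genuinely a $d_\infty$-open, deformation-stable condition — i.e. making the virtual-class argument work in the lcs (non-exact, only locally conformally symplectic) setting, where bubbling analysis must account for the Lee form and where one must rule out escape of the charge $[u^*\alpha]$ under degeneration. This is precisely the analytic content deferred to Section \ref{sec:Proofs of theorems on Conformal symplectic Weinstein conjecture}, and it is why the statement is restricted to $C^k$ (not $C^0$) neighborhoods: the perturbation theory for the Fredholm/Kuranishi setup needs enough regularity of $(\lambda',\alpha')$ to control $J'$ and the linearized operator.
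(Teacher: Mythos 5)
Your overall skeleton matches the paper's: identify the charge-one elliptic $J^{\lambda}$-curves in $C\times S^{1}$ with closed Reeb orbits (Proposition \ref{prop:abstractmomentmap}), equate the resulting count with the Fuller index (Theorem \ref{thm:GWFullerMain}), so that either hypothesis makes it nonzero, propagate nonemptiness to nearby almost complex structures via Gromov compactness and cobordism invariance of the virtual class (Proposition \ref{thm:nearbyGW}), and use adjunction plus positivity of intersections in dimension four. However, there is a genuine gap exactly at the step you yourself call the crux, which you dispatch with the phrase ``choose $\omega_{\lambda'}=d_{\alpha'}\lambda'$ and a compatible $J'$ close to $J$''. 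Compatibility of $J'$ with $\omega'$ is not enough: the reason a $J'$-holomorphic elliptic curve is a Reeb $2$-curve \emph{for the new structure} $(\lambda',\alpha')$ is Lemma \ref{lemma:calibrated} combined with the covering-space/Stokes argument of Lemma \ref{lemma:Reeb}, and both require $J'$ to be $(\lambda',\alpha')$-admissible, i.e.\ to preserve the new canonical distribution $\mathcal{D}'$ and its $\omega'$-orthogonal complement and to be tamed by $d\lambda'$ there. A merely $\omega'$-compatible $J'$ produces a holomorphic torus with no reason to be tangent to $\mathcal{D}'$, so your conclusion that the perturbed moduli space consists of ``Reeb 2-curves'' is unsupported as written.

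Moreover, even granting that you intend $J'$ to be admissible, you still must show that an admissible $J'$ can be chosen quantitatively ($C^{2}$, in fact $C^{\infty}$) close to $J^{\lambda}$, uniformly over a $d_{\infty}$-neighborhood of $(\lambda,\alpha)$ in $\mathcal{F}(M)$, because the persistence statement only applies to $\delta$-close almost symplectic pairs. This bridge is the actual content of the paper's proof of the theorem beyond Proposition \ref{thm:holomorphicSeifert}: for first kind structures the vanishing and co-vanishing distributions $\mathcal{V}_{\lambda'}$, $\xi_{\lambda'}$ are smooth and vary continuously in $C^{\infty}$ with $(\lambda',\alpha')$, so one defines $E(\omega')(p)=\phi(p)_{*}J^{\lambda}$, where $\phi(p)=P_{1}\oplus P_{2}$ is assembled from the $g$-orthogonal projections $\mathcal{V}_{\lambda}(p)\to\mathcal{V}_{\lambda'}(p)$ and $\xi_{\lambda}(p)\to\xi_{\lambda'}(p)$; this $E(\omega')$ is admissible for $(\lambda',\alpha')$ and $\epsilon$-close to $J^{\lambda}$ once $d_{\infty}$-closeness is small enough. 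Your proposal contains no substitute for this construction, and without it the passage from ``$(\lambda',\alpha')$ is $d_{\infty}$-near the lcs-fication'' to ``there is a nearby admissible $J'$, hence a Reeb $2$-curve for $(\lambda',\alpha')$'' does not go through.
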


\subsection{Reeb 1-curves} \label{sec:Reeb 1-curves}
We have stated some basic new Reeb dynamics phenomena in the
introduction. We now discuss an application of a different
character.

\begin{definition}\label{def:Reeb1Curve}
A smooth map $o: S ^{1} \to M$ is a 
\textbf{\emph{Reeb 1-curve}} in an exact lcs manifold
$(M,\lambda,\alpha)$, if  $$\forall t \in  S ^{1}: 
(\lambda (o' (t)) >0) \land (o' (t)  \in 
\mathcal{D}).$$ 
\end {definition} 
The following is proved in Section \ref{sec:Proofs of theorems on Conformal symplectic Weinstein conjecture}.
\begin{definition} \label{def:ReebCondition}
	We say that an exact lcs manifold $(M, \lambda, \alpha
	) $ satisfies the \textbf{\emph{Reeb condition}} if: $$\lambda (X _{\alpha }) >0.$$
\end{definition}
\begin{theorem} \label{thm:Reeb1curves} Suppose that $(M, 
   \lambda, \alpha ) $ is an exact lcs 
   manifold satisfying the Reeb condition.  If $(M, 
   \lambda, \alpha )$ has an immersed Reeb 2-curve 
   then it also has a Reeb 1-curve. Furthermore, if it has an 
   immersed elliptic Reeb 2-curve, then this Reeb 2-curve is 
   normal. 
\end{theorem}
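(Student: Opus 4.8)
The plan is to analyze an immersed Reeb 2-curve $u : \Sigma \to M$ leafwise with respect to the canonical distribution $\mathcal{D} = \spann(X_\alpha, X_\lambda)$ and use the Reeb condition $\lambda(X_\alpha) > 0$ to extract a closed curve on which $\lambda$ is positive and which is tangent to $\mathcal{D}$. First I would observe that where $d\widetilde{u}$ is nonzero and $\dim \mathcal{D} = 2$, the image of $u$ is (locally) an integral surface of $\mathcal{D}$, so it suffices to understand the integrable distribution $\mathcal{D}$ along such a leaf. Pulling back the two defining forms, $u^*\lambda$ and $u^*\alpha$, gives two closed $1$-forms on (the smooth part of) $\Sigma$; closedness of $u^*\alpha$ is automatic since $\alpha$ is closed, and closedness of $u^*\lambda$ on the leaf follows because $d\lambda$ restricted to $\mathcal{D}$ can be computed from $\omega = d_\alpha\lambda = d\lambda - \alpha\wedge\lambda$ and the fact that $X_\lambda, X_\alpha$ span $\mathcal{D}$ — on $\mathcal{D}$ the form $\omega$ is $2$-dimensional and one checks $d\lambda|_{\mathcal{D}} = (\alpha\wedge\lambda)|_{\mathcal{D}}$ is the relevant closed expression. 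Condition (2) in Definition \ref{def:Reeb2curve} guarantees $[u^*\alpha] \neq 0$ in $H^1(\Sigma,\mathbb{R})$, so $u^*\alpha$ is a nontrivial closed $1$-form.

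Next I would produce the Reeb 1-curve. Because $\lambda(X_\alpha) > 0$, along the leaf the vector field $X_\alpha$ satisfies $\lambda(X_\alpha) > 0$ pointwise, and $X_\alpha \in \mathcal{D}$ by definition; I claim its flow through a generic point of the leaf, after a homotopy/closing argument, yields a periodic orbit. More directly: pick a point $p$ in the image where $\dim\mathcal{D} = 2$, and consider the leaf $\Lambda$ of $\mathcal{D}$ through $p$ — since $u$ is an immersed Reeb 2-curve with $[u^*\alpha]\neq 0$, the leaf carries the nonexact closed form $\alpha|_\Lambda$. On the $2$-dimensional leaf, $X_\alpha$ is a nowhere-vanishing vector field (it is nonzero precisely because $\lambda(X_\alpha) > 0$), hence its flow defines a $1$-dimensional foliation of $\Lambda$; transverse to it, $\alpha$ integrates to a coordinate because $\alpha(X_\alpha) = \omega(X_\alpha, X_\alpha) = 0$ while... here I would instead use that $u^*\alpha$ has a nonzero period around some loop $\gamma \subset \Sigma$, push $\gamma$ into the leaf, and flow it under $X_\alpha$ (or a suitable reparametrization tangent to $\mathcal{D}$) to arrange $\lambda(o'(t)) > 0$ and $o'(t) \in \mathcal{D}$. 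The cleanest route is: the leaf $\Lambda$ is a surface carrying a nonvanishing vector field $X_\alpha$ with $\lambda(X_\alpha)>0$; any noncontractible loop in $\Lambda$ (which exists since $\alpha|_\Lambda$ is a nontrivial closed form, so $H^1(\Lambda)\neq 0$) can be isotoped within $\Lambda$ to be positively transverse... — I would make this precise by taking the loop to be a leaf-closure or an orbit of $X_\alpha$ closed up using the nontrivial $H^1$, giving the desired $o : S^1 \to M$ with $\lambda(o'(t)) > 0$ and $o'(t)\in\mathcal{D}$, i.e.\ a Reeb 1-curve.

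For the second assertion, suppose $u : \Sigma \to M$ is an immersed \emph{elliptic} Reeb 2-curve; I must show it is normal, i.e.\ the nodal set of $\Sigma$ is empty. Here I would argue that a node would force two local branches of the image to be tangent to $\mathcal{D}$ at the node and meet there, but since $\mathcal{D}$ is (generically) an honest $2$-plane field and the curve is immersed, a node contributes to the Riemann surface a singular point whose normalization has positive genus defect; combined with the topological genus being exactly $1$ and condition (2) forcing $[u^*\alpha]\neq 0$ (which needs a handle surviving), any node would drop the effective genus below what is needed to support a nontrivial $u^*\alpha$ on a single irreducible component. More carefully: if $\Sigma$ has a node, its normalization $\widetilde\Sigma$ is either a genus-$0$ curve (torus pinched at a loop) or disconnected/lower genus; in the genus-$0$ case every closed $1$-form on $\widetilde\Sigma$ is exact, hence $\widetilde{u}^*\alpha$ is exact, and then $u^*\alpha$ would have to be supported in the vanishing-cycle direction — but one checks the period of $u^*\alpha$ on the vanishing cycle is forced to be zero by the immersion condition and $\alpha(X_\alpha)=0$, contradicting $[u^*\alpha]\neq 0$. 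Thus no node exists and $u$ is normal.

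The main obstacle I anticipate is the closing-up step in the second paragraph: turning the qualitative fact ``$X_\alpha\in\mathcal{D}$, $\lambda(X_\alpha)>0$, and the leaf has nontrivial $H^1$'' into an honest \emph{closed} orbit $o:S^1\to M$ that stays tangent to $\mathcal{D}$ and has $\lambda(o')>0$ everywhere. The subtlety is that $X_\alpha$ need not itself have a periodic orbit; one must instead use the period class of $u^*\alpha$ to select the homotopy class and then perturb a representative loop within the leaf to be everywhere positively transverse in the $\lambda$-sense, using convexity of the condition $\lambda(v)>0,\ v\in\mathcal{D}$ along the (at most $2$-dimensional, hence very constrained) leaf. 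Verifying that this perturbation can be done while keeping the loop embedded/immersed and inside a single leaf is the delicate point; the dimension-$\le 2$ rigidity of $\mathcal{D}$'s leaves is what makes it work.
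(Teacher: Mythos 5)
Both halves of your proposal have genuine gaps. For the first part, the step you yourself flag as the ``main obstacle'' is in fact the whole content of the statement, and your setup makes it harder than it needs to be: you work with ``the leaf $\Lambda$ of $\mathcal{D}$ through $p$'' inside $M$, but $\mathcal{D}$ is not assumed integrable (only the Reeb 2-curve itself is an integral surface), and such a leaf, even when it makes sense, is non-compact in general, so the recurrence/closing argument you gesture at has nothing to run on. The paper instead works entirely on the compact domain: since $u$ is an immersion tangent to $\mathcal{D}$ and the Reeb condition makes $X_\alpha, X_\lambda$ pointwise independent, one pulls back the two line fields to get transverse $1$-distributions $D_1 = \widetilde{u}^*\mathbb{R}\langle X_\alpha\rangle$ and $D_2 = \widetilde{u}^*\mathbb{R}\langle X_\lambda\rangle$ on the closed surface $\widetilde{\Sigma}$, finds an embedded $D_1$-tangent path $\gamma$ with $\lambda(\gamma')>0$ whose two endpoints lie on a single $D_2$-leaf, closes it up along that $D_2$-leaf (where $\lambda$ vanishes, since $\lambda(X_\lambda)=\omega(X_\lambda,X_\lambda)=0$), and then perturbs the concatenation to make $\lambda(o')>0$ everywhere while staying tangent to $\mathcal{D}$. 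Compactness of $\widetilde{\Sigma}$ and transversality of the two foliations are what make this elementary; neither is available in your leafwise picture, and no closed orbit of $X_\alpha$ is ever needed. (Incidentally, your side remark that $u^*\lambda$ is closed is false: $d\lambda|_{\mathcal{D}} = \omega|_{\mathcal{D}} + (\alpha\wedge\lambda)|_{\mathcal{D}}$ and $\omega(X_\lambda,X_\alpha)=\lambda(X_\alpha)>0$, so $d\lambda$ does not restrict to zero on $\mathcal{D}$.)

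For the normality statement your cohomological argument does not give a contradiction. If $\Sigma$ is a pinched torus with normalization $\mathbb{CP}^1$, exactness of $\widetilde{u}^*\alpha$ on $\mathbb{CP}^1$ is perfectly compatible with $[u^*\alpha]\neq 0$ on $\Sigma$: the nontrivial period is taken over the loop through the node, and equals the difference of values of a primitive of $\widetilde{u}^*\alpha$ at the two preimages of the node, which need not vanish; your claim that the relevant period is ``forced to be zero by the immersion condition and $\alpha(X_\alpha)=0$'' is unsupported, and the ``vanishing cycle'' is null-homotopic in $\Sigma$ anyway, so a zero period on it contradicts nothing. (Indeed Lemma \ref{lemma:ReebCurveRational} in the paper exists precisely because condition (2) can fail for the normalization; nodes are not excluded by cohomology alone.) The correct obstruction is topological: a genus-$0$ component of the normalization would be an immersed sphere tangent to $\mathcal{D}$, and pulling back the line field spanned by $X_\alpha$ (nowhere zero and $\lambda$-oriented, by the Reeb condition) would produce an oriented $1$-dimensional distribution on $\mathbb{CP}^1$, which is impossible since $\chi(S^2)\neq 0$. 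You need an argument of this Poincar\'e--Hopf type; the genus-counting route you sketch does not close.
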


We have an immediate corollary of Theorem 
\ref{thm:C0Weinstein} and Theorem 
\ref{thm:Reeb1curves}.
\begin{corollary} \label{corol:reeb1curve} 
Let $\lambda$ be a contact form, on closed 
3-manifold $C$, with at least one non-degenerate 
Reeb orbit, or more generally satisfying $i (N,R ^{\lambda},
\beta) \neq 0$ for some open compact $N$ as previously.
Then there is a 
$d _{\infty} $ neighborhood $U$ of the $\lcs$-fication 
$(\lambda, \alpha)$ in the space $\mathcal{F} (C 
\times S ^{1}) $, s.t. for each $(\lambda', \alpha 
') \in U$ there is a Reeb 1-curve.
\end{corollary}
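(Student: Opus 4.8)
\textbf{Proof proposal for Corollary \ref{corol:reeb1curve}.}

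The plan is to simply concatenate the two preceding results, after checking that the hypotheses line up. Start with a contact form $\lambda$ on the closed $3$-manifold $C$ satisfying condition (1) or (2) of Theorem \ref{thm:C0Weinstein}. Since $\dim(C \times S^1) = 4$, Theorem \ref{thm:C0Weinstein} gives a $d_\infty$-neighborhood $U$ of the $\lcs$-fication $(\lambda,\alpha)$ inside $\mathcal{F}(C\times S^1)$ such that every $(\lambda',\alpha')\in U$ admits a Reeb $2$-curve $u\colon\Sigma\to M$ which may be taken elliptic, $\alpha$-charge $1$, and — because we are in dimension $4$ — embedded and normal, hence in particular immersed. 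So from Theorem \ref{thm:C0Weinstein} alone we already have an immersed (indeed embedded elliptic) Reeb $2$-curve for every structure in $U$.

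Next I would invoke Theorem \ref{thm:Reeb1curves} to pass from the Reeb $2$-curve to a Reeb $1$-curve. For this I need to know that every $(\lambda',\alpha')\in U$ satisfies the Reeb condition $\lambda'(X_{\alpha'})>0$ of Definition \ref{def:ReebCondition}. The point is that this holds for the $\lcs$-fication itself: for $\omega_\lambda = d\lambda - \alpha\wedge\lambda$ on $C\times S^1$ with $\alpha = pr_{S^1}^*d\theta$, a direct computation identifies $X_\alpha$ with (a positive multiple of) the vector field $\partial_\theta$ tangent to the $S^1$ factor and then $\lambda(X_\alpha) = 1 > 0$ (here $\lambda$ is the pullback, so it is nonzero precisely on the $C$-directions and this pairing is strictly positive). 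Since $\lambda(X_\alpha)$ depends continuously on $(\lambda,\alpha)$ in the $d_\infty$-topology — $X_\alpha$ is obtained by inverting the non-degenerate form $\omega = d_\alpha\lambda$, which varies continuously, and contracting with $\alpha$ — the strict inequality persists on a (possibly smaller) $d_\infty$-neighborhood. Shrinking $U$ to this neighborhood, every $(\lambda',\alpha')\in U$ satisfies the Reeb condition, and its Reeb $2$-curve is immersed, so Theorem \ref{thm:Reeb1curves} produces a Reeb $1$-curve. Replacing $U$ by the intersection of the two neighborhoods gives the claim.

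The only genuine content beyond bookkeeping is verifying that the Reeb condition is open around the $\lcs$-fication, i.e. the continuity of $(\lambda,\alpha)\mapsto \lambda(X_\alpha)$ and the computation $\lambda(X_\alpha)=1$ at the $\lcs$-fication; everything else is a direct application of the two cited theorems with $\dim M = 4$. I do not anticipate a real obstacle here — the main thing to be careful about is that Theorem \ref{thm:C0Weinstein}'s neighborhood and the Reeb-condition neighborhood need not coincide, so the final $U$ must be taken as their intersection, which is still an open $d_\infty$-neighborhood of $(\lambda,\alpha)$ in $\mathcal{F}(C\times S^1)$.
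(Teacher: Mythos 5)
Your proposal is correct and follows exactly the route the paper intends: the paper presents this corollary as an immediate consequence of Theorem \ref{thm:C0Weinstein} (with $\dim M=4$, giving an embedded, normal, hence immersed elliptic Reeb 2-curve) together with Theorem \ref{thm:Reeb1curves}. Two small remarks. First, your identification of $X_{\alpha}$ at the lcs-fication is off: by Example \ref{section:lcsfication} one has $X_{\alpha}=(R^{\lambda},0)$ and $X_{\lambda}=(0,\partial_{\theta})$, so $\lambda(X_{\alpha})=\lambda(R^{\lambda})=1$; as you yourself note, $\lambda$ is pulled back from $C$ and vanishes on the $S^{1}$ direction, so if $X_{\alpha}$ were a multiple of $\partial_{\theta}$ you would get $\lambda(X_{\alpha})=0$, contradicting your conclusion — the conclusion $\lambda(X_{\alpha})=1>0$ is nonetheless the right one. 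Second, the shrinking of $U$ is unnecessary: every element of $\mathcal{F}(C\times S^{1})$ is first kind, and for first kind structures $\lambda(X_{\alpha})=\omega(X_{\lambda},X_{\alpha})=1$ everywhere (Example \ref{example:firstKindtamed}), so the Reeb condition holds automatically on all of $U$, which is why the paper treats the corollary as immediate.
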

\begin{corollary} \label{lemma:conformalWeinsteinSurface}
Every closed exact lcs surface satisfying the Reeb condition
has a Reeb 1-curve.
\end{corollary}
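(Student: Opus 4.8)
The plan is to reduce the statement, via Theorem~\ref{thm:Reeb1curves}, to the production of a single explicit immersed Reeb 2-curve on $M$. Concretely, I would argue that a closed exact lcs surface $(M,\lambda,\alpha)$ satisfying the Reeb condition must be a $2$-torus (on each connected component), that its canonical distribution $\mathcal{D}$ is all of $TM$, and that $0\neq[\alpha]\in H^1(M;\R)$; once these three facts are in hand, the identity map $\operatorname{id}\colon(M,j)\to M$, for any choice of complex structure $j$ on $M$, is a Reeb 2-curve in the sense of Definition~\ref{def:Reeb2curve}. Indeed condition (1) there asks precisely that the differential $d\operatorname{id}$ have image $\mathcal{D}$ wherever it is nonzero and $\dim\mathcal{D}=2$, which here holds everywhere, and condition (2) is $[\operatorname{id}^{*}\alpha]=[\alpha]\neq 0$. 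Since $\operatorname{id}$ is an embedding it is in particular immersed, its nodal set is empty so its normalization is itself, and $(M,j)$ is a closed connected genus-one Riemann surface, hence elliptic and trivially normal; feeding this into Theorem~\ref{thm:Reeb1curves} then yields the desired Reeb 1-curve.

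For the topological reduction I would observe that $\omega=d_{\alpha}\lambda$, being a nowhere-degenerate $2$-form on a surface, is an area form, so $M$ is orientable; and the Reeb condition $\lambda(X_{\alpha})>0$ forces $\lambda_{p}\neq 0$ for every $p$, so $\lambda$ is a nowhere-vanishing $1$-form and therefore $\chi(M)=0$. A closed orientable surface with vanishing Euler characteristic is a $2$-torus, so (running the argument componentwise if $M$ is disconnected) $M\cong T^{2}$. To see $\mathcal{D}=TM$ I would compute, at each point, $\omega(X_{\alpha},X_{\lambda})=\alpha(X_{\lambda})=-\lambda(X_{\alpha})<0$ by the Reeb condition, so $X_{\alpha}$ and $X_{\lambda}$ are linearly independent and hence span the $2$-dimensional tangent space; thus $\mathcal{D}(p)=T_{p}M$ for all $p$.

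Finally, to obtain $[\alpha]\neq 0$ I would use the Reeb condition a second time: if $\alpha=dH$ were exact, then $H$ would attain an extremum at some $p\in M$ since $M$ is closed, forcing $\alpha_{p}=dH_{p}=0$ and hence $X_{\alpha}(p)=0$, so that $\lambda_{p}(X_{\alpha}(p))=0$, contradicting the Reeb condition. I expect essentially no serious obstacle in any of this: the only mildly delicate point is the non-exactness of $\alpha$, while the substantive content — producing a Reeb 1-curve from an immersed Reeb 2-curve on a surface satisfying the Reeb condition — is already packaged in Theorem~\ref{thm:Reeb1curves}, which may be quoted directly. The one thing to be careful about is to match hypotheses precisely: that the verified data (exactness of the lcs structure, the Reeb condition, and the immersed Reeb 2-curve just constructed) is exactly what Theorem~\ref{thm:Reeb1curves} requires, and that ``elliptic'' there coincides with ``$(M,j)$ of topological genus one'' here, so that one additionally concludes the $2$-curve is normal.
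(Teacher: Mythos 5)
Your proposal is correct and takes essentially the same route as the paper: the whole content is that the identity map of the surface is an immersed Reeb 2-curve because $0\neq[\alpha]\in H^{1}(M,\mathbb{R})$, and Theorem \ref{thm:Reeb1curves} then yields the Reeb 1-curve. The only (harmless) deviations are that you reprove the non-exactness of $\alpha$ by a critical-point argument using the Reeb condition, whereas the paper quotes Lemma \ref{lemma:alphaclass} (a Stokes-theorem argument valid for any closed exact lcs manifold), and you add the unneeded extras that $M\cong T^{2}$ and $\mathcal{D}=TM$, which are not required since condition (1) of Definition \ref{def:Reeb2curve} is automatic for the identity map of a surface.
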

\begin{lemma}
   \label{lemma:alphaclass} Let $(M, \lambda, \alpha) $  be 
   an exact lcs manifold with $M$ closed then $0 
   \neq [\alpha] \in H ^{1} 
   (M, \mathbb{R} ^{} ) $.  
\end{lemma}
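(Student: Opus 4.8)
The claim is that for a closed exact lcs manifold $(M,\lambda,\alpha)$ the Lee class $[\alpha]$ is nonzero in $H^1(M,\mathbb{R})$. The plan is to argue by contradiction: suppose $[\alpha]=0$, so that $\alpha = df$ for some global smooth function $f$ on $M$. Then the Lichnerowicz differential $d_\alpha = d - \alpha\wedge(\cdot)$ is conjugate to the ordinary de Rham differential via multiplication by $e^{-f}$; concretely, $d_\alpha\lambda = e^{f}\, d(e^{-f}\lambda)$. Hence $\omega = d_\alpha\lambda = e^{f} d\mu$ where $\mu := e^{-f}\lambda$, so $\omega$ is (globally) conformal to the exact symplectic form $d\mu$.

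From here I would derive the contradiction from a Stokes-theorem / volume argument, exactly as in the remark in the excerpt that $(d\lambda)^n$ cannot be non-degenerate on a closed manifold. Since $\omega$ is non-degenerate, $\omega^n$ is a volume form on the closed $2n$-manifold $M$, so $\int_M \omega^n \neq 0$. On the other hand $\omega = e^f d\mu$ gives $\omega^n = e^{nf}(d\mu)^n$; but $(d\mu)^n = d(\mu\wedge(d\mu)^{n-1})$ is exact. To conclude I want $\int_M e^{nf}(d\mu)^n = 0$, which is not immediate because of the conformal factor. The clean way is to work instead directly with the non-degeneracy of $\omega$ without passing through $\omega^n$: note that $\omega$ being conformal symplectic means $\widetilde\omega$ on the relevant cover is globally conformally symplectic, but more elementarily, one uses that the primitive $\mu$ of $d\mu$ restricted to $M$ satisfies $\int_M (d\mu)^n = 0$ by Stokes, and the existence of a non-vanishing conformal factor does not change the (vanishing of the) cohomology class of $(d\mu)^n$ in $H^{2n}_{dR}(M) = \mathbb{R}$ — but it does change the integral. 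So the honest argument is: $d_\alpha\lambda$ non-degenerate forces, via $\omega=e^f d\mu$, that $d\mu$ is itself non-degenerate (multiplying a non-degenerate $2$-form by a positive function keeps it non-degenerate), i.e. $d\mu$ is a symplectic form on the closed manifold $M$; but then $[d\mu]^n \neq 0$ in $H^{2n}(M,\mathbb{R})$, contradicting that $d\mu$ is exact. That is the contradiction.

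So the key steps in order are: (1) assume $[\alpha]=0$ and write $\alpha = df$; (2) observe $d_\alpha = e^{f}\circ d\circ e^{-f}$ on forms, hence $\omega = e^{f} d(e^{-f}\lambda)$; (3) set $\mu = e^{-f}\lambda$ and note $e^{-f}\omega = d\mu$ is non-degenerate (a positive multiple of a non-degenerate form), so $d\mu$ is a symplectic form on the closed manifold $M$; (4) invoke the standard fact that a closed symplectic manifold has $[\omega_{\mathrm{symp}}]^n \neq 0 \in H^{2n}(M,\mathbb{R})$, so $d\mu$ cannot be exact — contradiction, since $d\mu$ is visibly exact. Therefore $[\alpha]\neq 0$.

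The main (and only real) obstacle is step (2)–(3): making sure the bookkeeping of the conformal rescaling is right, i.e. that $d_\alpha(e^{-f}\lambda) = e^{-f}d_\alpha\lambda + d(e^{-f})\wedge\lambda$ combines correctly. Since $d_\alpha\eta = d\eta - df\wedge\eta$ when $\alpha = df$, one computes $d(e^{-f}\lambda) = e^{-f}d\lambda - e^{-f}df\wedge\lambda = e^{-f}(d\lambda - df\wedge\lambda) = e^{-f}d_\alpha\lambda = e^{-f}\omega$, which is exactly what is needed and is a one-line check. Everything else is the textbook cohomological obstruction to a closed symplectic manifold, which I would cite rather than reprove. (One should also note the hypothesis, used implicitly throughout the section, that $M$ has dimension at least $2$, and that $M$ closed is essential — on $\mathbb{R}^{2n}$ the statement is false.)
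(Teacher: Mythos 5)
Your proposal is correct and follows essentially the same route as the paper: assume $\alpha$ exact, conjugate the Lichnerowicz differential to write $\omega = d_\alpha\lambda$ as a positive function times $d(e^{-f}\lambda)$, conclude that the exact form $d(e^{-f}\lambda)$ is itself non-degenerate on the closed manifold $M$, and derive the contradiction from Stokes' theorem (equivalently, the non-vanishing of the top power of a symplectic class). The middle detour about $\omega^n = e^{nf}(d\mu)^n$ is unnecessary but harmless, since you land on exactly the paper's one-line argument.
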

\begin{proof} 
Suppose by contradiction that $\alpha$ is exact and let $g$ be its primitive.   Then computing we get: $d _{\alpha} \lambda = 
   \frac{1}{f} d (f \lambda) $ with $f = e  
   ^{g}$. Consequently, $d (f \lambda) $ is 
   non-degenerate on $M$ which contradicts Stokes 
   theorem. 
\end{proof}

\begin{proof} [Proof of Corollary \ref{lemma:conformalWeinsteinSurface}]
This follows by Theorem \ref{thm:Reeb1curves}. As in this
case by the lemma above the identity map $X \to X$ is an
immersed Reeb 2-curve, by Lemma \ref{lemma:alphaclass}.
\end{proof}

\begin{proposition} \label{lemma_MappingTorusReeb1curve}
Assume the Weinstein conjecture, then the mapping torus $M
_{\phi }$ of a strict contactomorphism $\phi: (C, \lambda
) \to (C, \lambda )$, where $C$ is closed, has a Reeb 1-curve.
\end{proposition}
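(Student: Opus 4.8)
The plan is to exhibit an explicit Reeb 1-curve inside the mapping torus $M_\phi$ using a closed Reeb orbit of $(C,\lambda)$, whose existence is exactly what the Weinstein conjecture supplies. First I would recall the structure of the mapping torus: $M_\phi = (C \times [0,1])/\sim$, where $(p,1) \sim (\phi(p),0)$, fibering over $S^1$ via the projection $\pi$, and carrying the exact lcs structure $(\lambda,\alpha)$ with $\alpha = c\,\pi^*(d\theta)$ as in Example \ref{example:mappingtorus}; here $\lambda$ denotes the form on $M_\phi$ induced from $\lambda$ on $C$ (well-defined since $\phi$ is strict). The key local computation is to identify the canonical distribution $\mathcal{D} = \spann(X_\alpha, X_\lambda)$ in terms of the contact data: $X_\lambda$ restricted to each fiber $C \times \{t\}$ is (up to scale) the Reeb vector field $R^\lambda$, while $X_\alpha$ points transversally to the fibers, in the $[0,1]$-direction. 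So $\mathcal{D}$ is the 2-plane field spanned by the Reeb direction and the fiber-transverse direction.

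Next, by the Weinstein conjecture applied to the closed contact manifold $(C,\lambda)$, there is a closed Reeb orbit $\gamma: \R/T\Z \to C$ with $\dot\gamma = R^\lambda \circ \gamma$. The idea is to "spiral" this orbit along the mapping torus to produce a closed loop $o: S^1 \to M_\phi$. Concretely, one wants a loop that wraps some number $k$ of times around the base $S^1$ while simultaneously flowing along $R^\lambda$; a parametrization of the form $t \mapsto [(\gamma(a(t)), k t \bmod 1)]$ for a suitable increasing function $a$, chosen so that $o' (t)$ always lies in $\mathcal{D}$ — that is, so that the velocity is a positive-$\lambda$ combination of the Reeb direction and the $X_\alpha$ direction — and so that $o$ closes up after time $1$. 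Closing up requires that after traversing the base circle $k$ times, the accumulated Reeb flow time is an integer multiple of the period $T$ and that the return map $\phi^k$ (applied $k$ times at the gluing) sends $\gamma(a(1))$ back to $\gamma(a(0))$; since $\phi$ is a strict contactomorphism it maps Reeb orbits to Reeb orbits, but $\gamma$ need not be $\phi$-invariant, so one may need to take $k$ large or otherwise arrange the return — this is the one genuinely fiddly point.

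The condition $\lambda(o'(t)) > 0$ is automatic once the velocity has a positive component along $X_\lambda$ (whose $\lambda$-value is positive by the defining relation $\omega(X_\lambda,\cdot) = \lambda$ together with nondegeneracy), provided we keep the $X_\alpha$-component from overwhelming it; one can simply take the velocity to be $X_\lambda + \epsilon X_\alpha$ for small $\epsilon > 0$ along the relevant leaf, after reparametrizing. The main obstacle I anticipate is precisely the closing-up/gluing issue: reconciling the requirement that $o$ be a genuine loop of period $1$ with the fact that the Reeb orbit $\gamma$ interacts nontrivially with the monodromy $\phi$. One clean way around this is to work not with a single orbit but to observe that $\mathcal{D}$ integrates (along the Reeb-and-transverse directions) to a surface — in fact the image of $\gamma \times S^1$ suitably sheared is a Reeb 2-curve or at least an immersed torus tangent to $\mathcal{D}$ — and then invoke Theorem \ref{thm:Reeb1curves}: if $(M_\phi,\lambda,\alpha)$ satisfies the Reeb condition $\lambda(X_\alpha) > 0$ and has an immersed Reeb 2-curve, it has a Reeb 1-curve. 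So the streamlined plan is: (i) check the Reeb condition holds for the lcs-fication-type structure on $M_\phi$, which is a direct computation with $X_\alpha, X_\lambda$ in the coordinates above; (ii) build an immersed Reeb 2-curve from a closed Reeb orbit of $C$ crossed with the base $S^1$, using that $\phi$ preserves $\lambda$ hence maps the orbit cylinder to itself; (iii) apply Theorem \ref{thm:Reeb1curves} to conclude. Step (ii) — producing an honestly immersed, honestly closed surface tangent to $\mathcal{D}$ from the Weinstein orbit, handling the monodromy — remains the crux, but it is now a finite-dimensional, elementary matter rather than anything requiring the holomorphic curve machinery.
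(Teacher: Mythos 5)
There is a genuine gap, and it is exactly at the point you flag as the crux. Both of your routes (spiralling a Reeb orbit around the base $S^1$, or closing the orbit cylinder into an immersed Reeb 2-curve and invoking Theorem \ref{thm:Reeb1curves}) require the monodromy to cooperate: you need some power $\phi^k$ to send the image of the chosen Reeb orbit back to itself. This can fail for every $k$, and then neither construction closes up. The paper's own counterexample (Theorem \ref{theorem_counterexample}: the mapping torus of the unit cotangent bundle of $T^2$ with $\phi$ induced by an irrational rotation) is precisely a case where no power of $\phi$ fixes the image of any closed Reeb orbit, and indeed that manifold has \emph{no} Reeb 2-curves at all -- so step (ii) of your streamlined plan is not just fiddly, it is impossible in general. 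There is also a computational slip feeding into this: you have $X_\lambda$ and $X_\alpha$ swapped. In the mapping torus (as in Example \ref{section:lcsfication} and Section \ref{sec:Mapping tori and Reeb 2-curves}) one has $X_\alpha=(R^{\lambda_\theta},0)$ tangent to the fibers and $X_\lambda=(0,\partial/\partial\theta)$ transverse to them; moreover $\lambda(X_\lambda)=\omega(X_\lambda,X_\lambda)=0$, so your claim that positivity of $\lambda(o')$ comes from the $X_\lambda$-component is false -- it comes from the $X_\alpha$-component, since $\lambda(X_\alpha)=\omega(X_\lambda,X_\alpha)=1$.

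Once these two points are sorted, the intended proof is much shorter than anything you attempt, and this is why the paper can say ``immediate from definitions'': there is no need for the loop to wrap around the base at all. The Weinstein conjecture gives a closed Reeb orbit $o$ of the fiber $(C,\lambda_{\theta_0})$ over any fixed $\theta_0$; the loop $t\mapsto (o(t),\theta_0)$ in $M_\phi$ has velocity $c\,X_\alpha$ with $c>0$ the period, hence lies in $\mathcal{D}=\spann(X_\alpha,X_\lambda)$ and satisfies $\lambda(o'(t))=c>0$. That is already a Reeb 1-curve as in Definition \ref{def:Reeb1Curve}; the monodromy $\phi$ never enters. Your proposal, by contrast, builds in a closing-up condition that the definition does not ask for and that cannot in general be met.
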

\begin{proof} [Proof] Immediate from definitions.
\end{proof}
Inspired by the above considerations we conjecture:
\begin{conjecture} \label{conj:conformalWeinstein}
Suppose that $(M, \lambda, \alpha ) $ is a closed exact lcs 
manifold of dimension $4$ satisfying the Reeb
condition, then it has a Reeb 1-curve.
\end{conjecture}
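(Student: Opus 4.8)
\textbf{Proof proposal for Conjecture \ref{conj:conformalWeinstein}.}

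The plan is to reduce the conjecture to Theorem \ref{thm:Reeb1curves}, whose hypothesis already grants the passage from an immersed elliptic Reeb 2-curve to a Reeb 1-curve, and thereby to the existence problem for Reeb 2-curves in closed exact lcs 4-manifolds satisfying the Reeb condition. First I would record the easy structural observations: by Lemma \ref{lemma:alphaclass} the Lee class is nonzero, so $[\alpha]$ gives a genuinely nontrivial periodic direction; and on a closed 4-manifold the vanishing distribution $\mathcal{V}_\lambda$ cannot identically vanish (Stokes, as noted after Definition \ref{def:exactlcs}), hence the canonical distribution $\mathcal{D}$ is generically 2-dimensional. So the content is an existence statement for a closed (nodal, genus 1) surface tangent to $\mathcal{D}$ with nonzero $\alpha$-period.

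The core step is to run the elliptic pseudo-holomorphic curve machinery from Section \ref{sec:gromov_witten_theory_of_the_lcs_c_times_s_1_} in this more intrinsic 4-dimensional setting rather than only near an lcs-fication. Concretely I would (i) choose a compatible almost complex structure $J$ adapted to the splitting $TM = \mathcal{D} \oplus \xi_\lambda$ wherever $\mathcal{D}$ is 2-dimensional, so that Reeb 2-curves are exactly the $J$-holomorphic curves carrying nontrivial $\alpha$-charge; (ii) set up the moduli space of $\alpha$-charge 1 elliptic $J$-curves and show it is nonempty via a deformation/cobordism argument: deform the given scale-irrational first kind structure (Remark \ref{remark:NonTorus}) to a scale-integral one, which by Theorem \ref{thm:firstkindtorus} is the mapping torus of a strict contactomorphism, where the contact Weinstein-type input (a Reeb orbit, or nonzero Fuller index) produces a Reeb 2-curve as in Theorem \ref{thm:C0Weinstein}; (iii) establish Gromov compactness along the deformation, controlling bubbling and the genus, so that the Reeb 2-curve persists to the original structure. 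The dimension-4 hypothesis is used, as in item (4) of Theorem \ref{thm:C0Weinstein}, to arrange embeddedness/normality and to keep the curve genuinely elliptic rather than degenerating.

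The main obstacle I anticipate is compactness: along a path of lcs structures the distribution $\mathcal{D}$ can degenerate (drop rank) on larger sets, the $\alpha$-charge can in principle leak, and energy bounds for these curves are not automatic since $\omega$ is not exact and the relevant energy is measured against $d_\alpha\lambda$ — one must show no curve escapes to a region where $\mathcal{D}$ is 1-dimensional and no component with trivial $\alpha$-period splits off. A secondary difficulty is that a closed exact lcs 4-manifold need not be a product $C \times S^1$, so the deformation to a mapping torus may require first deforming within $\mathcal{F}(M)$ to a first kind scale-integral structure, which in turn needs that the first kind condition and the Reeb condition be preserved (or at least recoverable) along the deformation; if the manifold admits no scale-integral first kind structure at all, this reduction fails and one would instead need a direct existence argument for the elliptic moduli space, presumably via a degree/Fuller-index count as in Remark \ref{remark:}. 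I would therefore expect the honest proof to split into the case where such a deformation exists and a residual case handled by the virtual count.
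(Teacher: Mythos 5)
There is no proof of this statement in the paper to compare against: Conjecture \ref{conj:conformalWeinstein} is stated as an open conjecture, supported only by partial verifications (Theorem \ref{thm:C0Weinstein} for a $d_\infty$-neighborhood of an lcs-fication, Corollary \ref{lemma:conformalWeinsteinSurface} for surfaces, and Proposition \ref{lemma_MappingTorusReeb1curve} for mapping tori modulo the classical Weinstein conjecture). So your proposal should be judged as an attempt at an open problem, and as such it has a fatal structural gap. Your entire plan funnels through Theorem \ref{thm:Reeb1curves}: produce an immersed elliptic Reeb 2-curve (as a charge-$1$ $J$-holomorphic torus) and then extract a Reeb 1-curve. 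But Theorem \ref{theorem_counterexample} exhibits a closed exact lcs $4$-manifold satisfying the Reeb condition --- the mapping torus of the unit cotangent bundle of the flat $T^2$ by the lift of an irrational rotation --- which has \emph{no} Reeb 2-curves at all; by Lemma \ref{lemma:Reeb}, for any tamed first kind $J$ the relevant moduli spaces of nonconstant elliptic $J$-curves are therefore empty. The conjecture nevertheless holds for that example, but for a completely different reason: a closed Reeb orbit in a single contact fiber already gives a Reeb 1-curve (Proposition \ref{lemma_MappingTorusReeb1curve}). So step (ii) of your plan, nonemptiness of the charge-$1$ elliptic moduli space, is simply false in general, and no refinement of the compactness analysis in step (iii) can repair a strategy whose target objects may not exist.

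Beyond that, the obstacles you flag in your last paragraph are not technical loose ends but exactly where the paper's machinery stops. The deformation/cobordism invariance (Lemma \ref{prop:invariance1}, Lemma \ref{thm:welldefined}) needs an open compact piece of the parametrized moduli space, i.e.\ the absence of (essential) holomorphic sky catastrophes, and the paper explicitly cannot exclude these for large deformations --- this is why Theorem \ref{thm:basic1} is stated as an alternative and why Theorem \ref{thm:C0Weinstein} is only a small-perturbation result; moreover the nonvanishing input ($i(N,R^\lambda,\beta)\neq 0$ or an analogous virtual count for the initial structure) is precisely what is unknown for a general closed exact lcs $4$-manifold, which also need not be deformable through $\mathcal{F}(M)$ to a scale-integral mapping-torus structure. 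Finally, note the calibration of difficulty: applying the conjecture to the lcs-fication $C\times S^1$ of a closed contact $3$-manifold recovers the three-dimensional Weinstein conjecture (the argument of Proposition \ref{prop:Implies}), i.e.\ Taubes' theorem, so a soft Gromov-compactness-plus-deformation argument of the kind you sketch should not be expected to close the statement.
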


\begin{proposition} \label{prop:Implies}
The analogue of Conjecture \ref{conj:conformalWeinstein}, in
all dimensions, implies the 
Weinstein conjecture: every closed contact 
manifold $(C,\lambda)$ has a closed Reeb orbit. 
\end{proposition}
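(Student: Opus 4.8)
The plan is to run the lcs-fication construction of Example \ref{example:banyaga} as a black box in reverse. Given a closed contact manifold $(C,\lambda)$ with $\dim C = 2n+1$, form the closed exact lcs manifold $M = C\times S^1$ of dimension $2n+2$, with exact lcs structure $(\lambda,\alpha)$, where $\alpha = pr_{S^1}^*d\theta$ and $\omega_\lambda = d_\alpha\lambda = d\lambda - \alpha\wedge\lambda$. The idea is to feed $M$ into the all-dimensional analogue of Conjecture \ref{conj:conformalWeinstein} to extract a Reeb 1-curve, and then project that curve to $C$ to produce a closed $\lambda$-Reeb orbit.

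The first step is to identify the canonical distribution $\mathcal{D}$ of Definition \ref{def:canonicalDistribution} explicitly on $M=C\times S^1$. Let $R = R^\lambda$ denote the Reeb field of $\lambda$, regarded as a horizontal vector field on $M$, and let $\partial_\theta$ be the vertical field on the $S^1$ factor. A direct computation with $\omega_\lambda = d\lambda - \alpha\wedge\lambda$, using $d\lambda(R,\cdot)=0$, $\lambda(R)=1$, $\alpha(R)=0$, $\lambda(\partial_\theta)=0$, $\alpha(\partial_\theta)=1$, gives $\omega_\lambda(R,\cdot) = \alpha$ and $\omega_\lambda(\partial_\theta,\cdot) = -\lambda$. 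Hence $X_\alpha = R$ and $X_\lambda = -\partial_\theta$, so $\mathcal{D} = \spann(R,\partial_\theta)$, a genuine rank $2$ distribution (first kind). In particular $\lambda(X_\alpha) = \lambda(R) = 1 > 0$, so $M$ satisfies the Reeb condition of Definition \ref{def:ReebCondition}, and $M$ is an admissible input for the conjecture.

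Next I would apply the (all-dimensional) conjecture to get a Reeb 1-curve $o:S^1\to M$, i.e. a smooth map with $o'(t)\in\mathcal{D}(o(t))$ and $\lambda(o'(t))>0$ for all $t$. Since $\{R,\partial_\theta\}$ is a global frame for $\mathcal{D}$, write $o'(t) = a(t)R(o(t)) + b(t)\partial_\theta$ with $a,b$ smooth; then $\lambda(o'(t)) = a(t)>0$ for every $t$. Writing $o(t)=(\gamma(t),\psi(t))$ with $\gamma:S^1\to C$, the $C$-component satisfies $\gamma'(t) = a(t)R(\gamma(t))$ with $a(t)>0$, so $\gamma$ is an immersed closed loop everywhere positively tangent to $R$. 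Reparametrizing $\gamma$ so that the speed function is a positive constant yields an honest closed $\lambda$-Reeb orbit on $C$, which is the Weinstein conjecture.

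The only points needing care are (i) confirming that the lcs-fication verifies every hypothesis of the conjecture, which reduces to the computation above, closedness and exactness being immediate from the construction; and (ii) ensuring the projected loop $\gamma$ is nonconstant, which is exactly where the strict inequality $\lambda(o'(t))>0$, rather than mere tangency to $\mathcal{D}$, is used: it prevents $a(t)$ from vanishing, so $\gamma$ cannot collapse to a point even at parameters where $o$ moves purely in the $S^1$ direction. I do not expect either point to be a real obstacle; the content of the proposition is essentially the observation that a Reeb 1-curve in an lcs-fication $C\times S^1$ is nothing but a reparametrized (and "thickened" in the $S^1$ direction) closed Reeb orbit on $C$.
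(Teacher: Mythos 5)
Your proposal is correct and follows essentially the same route as the paper's proof: apply the conjecture to the lcs-fication $C\times S^1$, where $X_\alpha = R^\lambda$ gives the Reeb condition, and project the resulting Reeb 1-curve to $C$, using $\lambda(o'(t))>0$ to see the projection is a positively reparametrized closed Reeb orbit. The only difference is that you carry out the computation of $X_\alpha$, $X_\lambda$ explicitly rather than quoting it from Example \ref{section:lcsfication}, which is a harmless (and correct) elaboration.
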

\begin{proof} [Proof of Proposition \ref{prop:Implies}]
Let $(M= C \times S ^{1}, \lambda, \alpha)$ be the
$\lcs$-fication of a closed contact manifold $(C, \lambda)
$. Then it satisfies the Reeb condition.
Suppose that $o: S ^{1} \to M$ is a Reeb 1-curve.
Then $ \forall t \in [0,1]: \lambda (\dot o (t)) >0 $  and 
$o$ is tangent to $\mathcal{V} _{\lambda} = \mathcal{D}$.
Consequently, $pr _{C} \circ o$ is tangent to
$\ker d \lambda $, and $ \forall \tau \in [0,1]: \lambda
(({pr _{C} \circ o})' (\tau)) >0 $.   It follows that $pr _{C}
\circ o$ is a Reeb orbit of $(C, \lambda )$ up to parametrization.
\end{proof}

\section{$J$-holomorphic curves in lcs manifolds and the
definition of the invariant $\operatorname {GWF}$ } 
\label{sec:gromov_witten_theory_of_the_lcs_c_times_s_1_} 
Let $M,J$ be an almost complex manifold and $\Sigma,j$
a Riemann surface. Recall that a map
$u: \Sigma \to M$ is
said to be \emph{$J$-holomorphic}  if $du \circ j = J \circ du$.
\begin{notation}
   \label{notation:jcurve} We will often say 
   \textbf{\emph{$J$-curve}}  in place of $J$-holomorphic curve.
\end{notation}
First kind $\lcs$ manifolds give immediate examples of almost complex manifolds where the $L ^{2} $ $\energy$ functional is unbounded on the moduli spaces of fixed class $J$-curves, as well as where null-homologous $J$-curves can be non-constant. We are going to see this shortly after developing a more general theory.
\begin{definition} \label{def:admissible} Let 
$(M,\lambda, \alpha)$ be an exact lcs manifold, 
satisfying the \textbf{\emph{Reeb condition}}:  $\omega (X _{\lambda}, X 
_{\alpha}) = \lambda (X _{\alpha})  > 0$, where $\omega
= d _{\alpha} \lambda  
$. In this case, $\mathcal{D} $ is a 2-dimensional 
distribution, and we say that an $\omega$-compatible $J$ 
is \textbf{\emph{$(\lambda,\alpha)$-admissible}} or
$\omega$-admissible (when $\lambda,\alpha $ are implicit) if:
\begin{itemize}
\item  $J$ preserves  the canonical distribution 
$\mathcal{D} $ and preserves the $\omega$-orthogonal complement $\mathcal{D}  ^{\perp}$ of 
$\mathcal{D}$. That is 
$J ({V}) \subset \mathcal{D}   $ and 
$J(\mathcal{D}  ^{\perp}) \subset \mathcal{D}  
^{\perp}$. 
\item $d\lambda$ tames $J$ on $\mathcal{D}  ^{\perp}$.
\end{itemize}

Admissible $J$ exist by classical symplectic 
geometry, and the space of such $J$ is 
contractible see
~\cite{cite_McDuffSalamonIntroductiontosymplectictopology}.  We call $(\lambda, \alpha, J)$ as above a 
\textbf{\emph{tamed exact $\lcs$ structure}}, and  
$(\omega,J)$ is called a tamed exact $\lcs$ 
structure if $\omega = d_{\alpha}\lambda $, for 
$(\lambda, \alpha, J)$ a tamed exact $\lcs$ 
structure. In this case $(M, \omega,J) $, 
$(M, \lambda, \alpha, J) $   will be called a 
\textbf{\emph{tamed exact $\lcs$ manifold}}.
\end{definition}
\begin{example}
   \label{example:firstKindtamed} If 
   $(M,\lambda,\alpha)$ is first kind then by an elementary computation $\omega (X _{\lambda}, X 
_{\alpha}) =1$ everywhere. In particular, we may find 
a $J$ such that $(\lambda, \alpha, J)$ is a tamed 
exact lcs structure, and the space of such $J$ is 
contractible. We will call $(M, \lambda, \alpha, 
J) $ a \textbf{\emph{tamed first kind}}  lcs 
manifold.
\end{example}
\begin{lemma} \label{lemma:calibrated} Let $(M, \lambda, \alpha, J) $
 be a tamed first kind lcs manifold.  Then given a 
 smooth $u: \Sigma \to M$, where $\Sigma$ is a 
 closed (nodal) Riemann surface, $u$ is 
 $J$-holomorphic only if $$\image d {\widetilde{u} }  
 (z) \subset \mathcal{V} _{\lambda} ( 
 {\widetilde{u} }  (z)) $$ 
 for all $z \in \widetilde{\Sigma } $, where  $\widetilde{u}: \widetilde{\Sigma } \to M $ is the 
normalization  of $u$ (see Definition \ref{def:normalization}).
 In particular $\widetilde{u}  ^{*} 
 d\lambda =0$.  
\end{lemma}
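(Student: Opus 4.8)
The plan is to run the standard calibration argument: $d\lambda$ is exact, so its integral over each closed component of the normalization vanishes, while $(\lambda,\alpha)$-admissibility of $J$ forces $\widetilde{u}^{*}d\lambda$ to be a pointwise non-negative $2$-form; a non-negative form with zero integral is identically zero.

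First I would record the needed linear algebra. Since $(M,\lambda,\alpha)$ is first kind, $\omega(X_\lambda,X_\alpha)=1$ everywhere (Example~\ref{example:firstKindtamed}). Using $\omega = d\lambda-\alpha\wedge\lambda$, $\lambda = \omega(X_\lambda,\cdot)$, $\alpha = \omega(X_\alpha,\cdot)$, together with $\lambda(X_\lambda)=\alpha(X_\alpha)=0$, $\lambda(X_\alpha)=1$, $\alpha(X_\lambda)=-1$, one computes $d\lambda(X_\lambda,\cdot)=0$ and $d\lambda(X_\alpha,\cdot)=0$. Hence $\mathcal{D}\subseteq\mathcal{V}_\lambda$, and since both are $2$-dimensional in the first kind case, $\mathcal{D}=\mathcal{V}_\lambda$. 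Dually, for $v\in\mathcal{D}^{\perp}$ one gets $\lambda(v)=\omega(X_\lambda,v)=0$ and $\alpha(v)=\omega(X_\alpha,v)=0$, so $(\alpha\wedge\lambda)|_{\mathcal{D}^{\perp}}=0$ and therefore $d\lambda|_{\mathcal{D}^{\perp}}=\omega|_{\mathcal{D}^{\perp}}$, which tames $J|_{\mathcal{D}^{\perp}}$ by hypothesis (equivalently, by $\omega$-compatibility of $J$).

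Next I would fix a connected component $\widetilde{\Sigma}_0$ of the normalization — a smooth closed Riemann surface — and set $\widetilde{u}_0:=\widetilde{u}|_{\widetilde{\Sigma}_0}$, which is $J$-holomorphic since the normalization map is holomorphic. Along $\widetilde{u}_0$ decompose $TM=\mathcal{D}\oplus\mathcal{D}^{\perp}$ and write $d\widetilde{u}_0=(d\widetilde{u}_0)_{\mathcal{D}}+(d\widetilde{u}_0)_{\mathcal{D}^{\perp}}$; as $J$ preserves both summands and $d\widetilde{u}_0$ is $(j,J)$-linear, the $\mathcal{D}^{\perp}$-component is complex linear. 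Because $d\lambda(w,\cdot)=0$ for $w\in\mathcal{D}$, the $\mathcal{D}$-part of $d\widetilde{u}_0$ drops out of $\widetilde{u}_0^{*}d\lambda$, so $\widetilde{u}_0^{*}d\lambda = \big((d\widetilde{u}_0)_{\mathcal{D}^{\perp}}\big)^{*}\!\big(d\lambda|_{\mathcal{D}^{\perp}}\big)$, the pullback of a $J$-taming $2$-form by a complex linear bundle map. By the usual Wirtinger-type inequality this is a non-negative multiple of the area form of $\widetilde{\Sigma}_0$ at each point, vanishing exactly where $(d\widetilde{u}_0)_{\mathcal{D}^{\perp}}=0$. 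On the other hand $\widetilde{u}_0^{*}d\lambda = d(\widetilde{u}_0^{*}\lambda)$ has zero integral over the closed surface $\widetilde{\Sigma}_0$ by Stokes. Hence $\widetilde{u}_0^{*}d\lambda\equiv 0$ and $(d\widetilde{u}_0)_{\mathcal{D}^{\perp}}\equiv 0$, i.e. $\image d\widetilde{u}(z)\subseteq\mathcal{D}(\widetilde{u}(z))=\mathcal{V}_\lambda(\widetilde{u}(z))$ for all $z\in\widetilde{\Sigma}_0$. Running this over all components of $\widetilde{\Sigma}$ gives the first claim, and $\widetilde{u}^{*}d\lambda=0$ is then immediate (or is precisely the non-negative form just shown to vanish).

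I do not expect a genuine obstacle. The only points needing care are the identification $\mathcal{D}=\mathcal{V}_\lambda$ in the first-kind case (a short computation, using that $\mathcal{V}_\lambda$ is $2$-dimensional there) and the bookkeeping that passes from the possibly nodal, possibly disconnected $\Sigma$ to the smooth closed components of $\widetilde{\Sigma}$; both are routine. The conceptual content is just the calibration mechanism that gives the lemma its name: exactness of $d\lambda$ kills the integral, while admissibility of $J$ makes the integrand non-negative.
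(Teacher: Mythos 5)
Your proposal is correct and is essentially the paper's own argument: identify $\mathcal{D}=\mathcal{V}_\lambda$ in the first kind case, use Stokes to kill $\int\widetilde{u}^{*}d\lambda$ on the closed normalization, and use the admissibility conditions ($d\lambda$ vanishes on $\mathcal{V}_\lambda$, tames $J$ on the complement, $J$ preserves the splitting) to force the $\mathcal{D}^{\perp}$-component of $d\widetilde{u}$ to vanish. The paper phrases the last step as a contradiction (a nonzero projection at one point would make the integral strictly positive) rather than via pointwise non-negativity, but this is the same calibration mechanism.
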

\begin{proof}  As previously observed, by the 
first kind condition, $\mathcal{V} _{\lambda}$ is 
the span of $X _{\lambda}, X _{\alpha}$  and hence 
$$V:=\mathcal{V} _{\lambda } = \mathcal{D} 
_{\lambda}. $$ Let $u$ be $J$-holomorphic, so that 
$\widetilde{u} $ is $J$-holomorphic (by 
definition of a $J$-holomorphic nodal map). 
We have $$\int 
_{\Sigma} \widetilde{u}  ^{*} d \lambda =  0 $$ by Stokes 
theorem. Let $proj (p): T _{p} M \to 
V ^{\perp}(p)  $ be the projection induced by the 
splitting $TM = V \oplus V ^{\perp}$. 

Suppose that for  some $z \in \widetilde{\Sigma }  $, $proj
\circ d \widetilde{u}  (z) \neq 0 $. By the conditions:
\begin{itemize}
	\item $J$ is tamed by $d\lambda$ on $V ^{\perp} $. 
	\item $d \lambda $ vanishes on $V$.
	\item $J$ preserves the splitting $TM = V \oplus
	V ^{\perp} $.
\end{itemize}
we have $\int _{\widetilde{\Sigma }  } \widetilde{u}  ^{*} d 
\lambda >  0$, a contradiction. Thus, $$\forall z \in 
\widetilde{\Sigma }: proj \circ d \widetilde{u}  (z) = 0, $$ so 
$$\forall z \in \widetilde{\Sigma } : \image d 
\widetilde{u}  (z) \subset \mathcal{V} _{\lambda} 
(\widetilde{u}  (z)).  $$

\end{proof}
\begin{example} \label{section:lcsfication}
Let $(C \times S ^{1}, \lambda, \alpha) $ be the 
lcs-fication of a contact manifold $(C, \lambda) 
$. In this case $$X _{\alpha} = (R ^{\lambda},  
0),$$ where $R ^{\lambda}$ is the Reeb vector 
field and $$X _{\lambda} = (0, \frac{d}{d \theta}) 
$$ is the vector field generating the natural action 
of $S ^{1} $ on $C \times S ^{1}  $.  

If we denote by $\xi \subset T (C \times S ^{1} )$ the 
distribution $\xi (p) = \ker \lambda (p)$, then in 
this case $\xi = V ^{\perp}$ in the notation 
above. We then take $J$ to be an almost complex structure on 
$\xi$, which is $S ^{1} $ invariant, and 
compatible with $d\lambda$.  The latter means that 
$$g _{J} (\cdot, \cdot):= d \lambda| _{\xi} 
(\cdot, J \cdot)$$ is a $J$ invariant Riemannian metric on the distribution $\xi$. 

There is an induced almost complex structure $J ^{\lambda}$ on $C \times S ^{1} $, which is $S ^{1}$-invariant,  coincides with $J$ on $\xi$ and which satisfies: 
\begin{equation*}
J ^{\lambda} (X _{\alpha}) = X _{\lambda}.
\end{equation*}
Then $(C \times S ^{1}, \lambda, 
\alpha, J ^{\lambda} )$ is a tamed integral first kind $\lcs$ 
manifold.
\end{example}

\subsection {Charged elliptic curves in an lcs manifold} 
We now study moduli spaces of elliptic curves in a lcs
manifold, constrained to have a certain charge. \footnote {The name charge is inspired by the notion of charge in Oh-Wang
\cite{cite_OhWang}, in the context of contact
instantons. However, the respective notions are not obviously
related.}  In the present context,
one reason for the introduction of ``charge'' is that 
it is now possible for non-constant holomorphic 
curves to be null-homologous, so we need 
additional control.  Here is a simple example: take 
$S ^{3} \times S ^{1}  $ with $J=J ^{\lambda} $, 
for the $\lambda$ the standard contact form, then 
all the Reeb holomorphic tori (as defined further 
below) are null-homologous. 

Let $\Sigma$ be a complex torus with a chosen 
marked point $z \in \Sigma $, i.e. an elliptic 
curve over $\mathbb{C} $.  An isomorphism $\phi: 
(\Sigma _{1}, z _{1} ) \to (\Sigma _{2}, z _{2} )$ 
is a biholomorphism s.t. $\phi (z _{1} ) = z _{2} 
$. The set of isomorphism classes forms a smooth 
orbifold $M _{1,1} $. This has a natural 
compactification - the Deligne-Mumford 
compactification $\overline{M} _{1,1}  $, by 
adding a point at infinity, corresponding to a 
nodal genus 1 curve with one node. 

The notion of charge can be defined in a general setting.
\begin{definition}\label{def:charge}
Let $M$ be a manifold endowed with a closed integral 1-form $\alpha$.
Let $u: T^{2} \to M $ be a continuous map. Let $\gamma,
\rho: S ^{1} \to T ^{2}$ represent generators of $H _{1} (T ^{2}, \mathbb{Z})
$, with $\gamma \cdot \rho =1$, where $\cdot$ is the
intersection pairing with respect to the standard complex
orientation on $T ^{2}$.
Suppose in addition:
\begin{equation*}
\langle \gamma, u ^{*} {\alpha} \rangle =0, \quad
\langle \rho, u ^{*} {\alpha} \rangle \neq 0,
\end{equation*} 
where
$\langle ,  \rangle $ is the natural pairing of homology and
cohomology.
Then we call 
\begin{equation*}
n = |\langle \rho, u ^{*} {\alpha} \rangle | \in
\mathbb{N}_{>0},
\end{equation*}
the \textbf{\emph{$\alpha $-charge of $u$}}, or just the
charge of
$u$ when $\alpha $ is implicit. Suppose furthermore that
$\langle \rho, u ^{*} {\alpha} \rangle > 0,
$
then the class $u _{*} (\gamma)  \in \pi _{1}  (M)$ will be
called the \textbf{\emph{$\pi$-class of $u$}}, for $\pi _{1}
(M)$ the set of free homotopy classes of loops as before.
\end{definition} 
It is easy to see that charge is always defined and is
independent of choices above.  We may extend the definition
of charge to curves $u: \Sigma \to M$, with $\Sigma $ a nodal
elliptic curve, as follows.  
If $\rho: S ^{1} \to \Sigma $  represents the generator of
$H _{1} (\Sigma, \mathbb{Z} ) $ then define 
the charge of $u$ to be $|\langle \rho, u ^{*}\alpha   \rangle |$.
Obviously the charge condition is preserved under Gromov
convergence of stable maps. But it is not preserved in
homology, so that charge is \emph{not} a functional $H _{2} (M,
\mathbb{Z} ) \to \mathbb{N}  $.
\begin{definition}\label{definition_chargeclass}
By the above, associated to a continuous map $u: \Sigma \to M$ with $\Sigma $ an elliptic curve, and non-zero $\alpha $-charge, we have
a triple $(A, \beta, n) \in H ^{2} (M,
\mathbb{Z} ) \times \pi  _{1} (M) \times \mathbb{N} _{>0} $,
corresponding to the homology class, the $\pi _{} $-class,
and the $\alpha $-charge. This triple will be called the
\textbf{\emph{charge class}} of $u$.

\end{definition}

Let $(M, J)$ be an almost complex manifold and $\alpha
$ a closed integral 1-form on $M$ non vanishing in cohomology, then
we call $(M, J, \alpha) $ a \textbf{\emph{Lee manifold}}.
Suppose for the moment that there are no non-constant $J$-holomorphic maps $(S ^{2},j) \to 
(M,J)$ (otherwise we need stable maps), then for $n \geq 1$ we define: 
$$\overline{\mathcal{M}} ^{n} _{1,1}  (J, A, \beta)$$ as the set of equivalence classes of tuples $(u, S)$, for $S= (\Sigma, z) 
$ a possibly nodal elliptic curve and $u: \Sigma \to M$ 
a charge class $(A,\beta,n)$, $J$-holomorphic map. 
The equivalence relation is $(u _{1}, S _{1}  ) \sim (u _{2}, S _{2}  )$ if there is an isomorphism $\phi: S _{1} \to S _{2}  $ s.t. $u _{2} \circ \phi = u _{1} $.
It is not hard to see that such an isomorphism of preserves
the charge class, so that $\overline{\mathcal{M}} ^{n}
_{1,1}  (J, A, \beta)$ is well defined.

Also note that the expected dimension of $\overline{\mathcal{M}} 
_{1,1} ^1   ({J} ^{\lambda},
A, \beta)$ is 0. It is given by the
Fredholm index of the operator \eqref{eq:fullD} 
which is 2, minus the dimension of the 
reparametrization group (for non-nodal curves) 
which is 2. That is given an elliptic curve $S = 
(\Sigma,  z) $,  let $\mathcal{G} (\Sigma)$ be the 
2-dimensional group of biholomorphisms $\phi$ of 
$\Sigma$. Then given a $J$-holomorphic map $u: 
\Sigma \to M$, $(\Sigma,z,u)$ is equivalent to $(\Sigma, \phi(z), u 
\circ \phi)$ in $\overline{\mathcal{M}} _{1,1} ^1  ({J}
^{\lambda}, A, \beta)$, for $\phi \in \mathcal{G} (\Sigma)$.


By slight abuse we may just denote such an equivalence class 
above simply by $u$, so we may write $u \in
\overline{\mathcal{M}} ^{n} _{1,1}  (J, A, \beta) $, with $S$ implicit.
\subsection{Reeb holomorphic tori in $(C \times S ^{1},
J ^{\lambda}) $} \label{sec:Reeb holomorphic tori simple}
In this section we discuss an important example.
Let $(C, \lambda) $ be a contact manifold and let $\alpha $ and $J ^{\lambda }$ be as in Example \ref{section:lcsfication}.
So that in particular we get a Lee manifold $(C \times S ^{1}, J ^{\lambda}, \alpha) $. 

In this case we have one natural type of charge 1 $J
^{\lambda }$-holomorphic tori in $M = C \times S ^{1} $.
Let $o$ be a period $c$, closed Reeb orbit $o$ of $R
^{\lambda} $, and let $\beta $ it's class in $\pi _{1} (C)
\subset \pi _{1} (M)$. A \textbf{\emph{Reeb torus}} $u _{o} $ for $o$ is the 
map 
\begin{align*}
u _{o}: (S ^{1} \times S ^{1} = T ^{2}) \to C \times S ^{1} \\
u_o (s, t)  = (o  (s), t).
\end{align*}
A Reeb torus is $J ^{\lambda} $-holomorphic for a uniquely determined holomorphic structure $j$ on $T ^{2} $ defined by:
$$j
(\frac{\partial}{\partial s}) = c \frac{\partial} {\partial t}. $$



\subsection{Definition of the invariant $\operatorname
{GWF}$} \label{sec:Definition of GWF} 
Let $X$ be a manifold. For $g$ a taut metric on $X$, let $\lambda _{g}$ be the Liouville 1-form on the
unit cotangent bundle $C$ of $X$. If $\phi$ is an isometry
of $g$ then there is a strict contactomorphism
$\widetilde{\phi } $ of $(C, \lambda _{g})$, and this gives
the ``mapping torus'' lcs manifold $(M _{\widetilde{\phi },
1}, \lambda _{\widetilde{\phi} }, \alpha ) $ as described in
Section \ref{sec:Mapping tori and Reeb 2-curves}.

If $ \beta \in \pi _{1} ^{inc}  (X)$, let $\widetilde{\beta}
\in \pi _{1}  (C)
$ denote the lift of the class, defined by representing $\beta $ by a unit speed closed geodesic
$o$, taking the canonical lift $\widetilde{o}$ to a closed
Reeb orbit, and setting $\widetilde{\beta} = [\widetilde{o}
]$. Given $n \geq 1 $, suppose that 
\begin{equation} \label{eq:ncharephi}
\widetilde{\phi} ^{n} _{*}
(\widetilde{\beta}) = \widetilde{\beta }.
\end{equation}
Then, as explained  in Section \ref{sec:Mapping tori and
Reeb 2-curves}, this naturally induces a map $u ^{n}: T ^{2} \to M$ well
defined up to homotopy,  whose class in homology is denoted
by $A ^{n} _{\widetilde{\beta } } \in H _{2} (M, \mathbb{Z}
)$.  The $\alpha $-charge of $u ^{n}$ is $n$, and its $\pi _{}
$-class is $\widetilde{\beta } $.

By the tautness assumption on $g$ the space $\mathcal{O} (R ^{\lambda _{g}}, \widetilde{\beta}
)$ is compact. 
We then get that
$\overline{\mathcal{M}} ^n _{1,1}  (J ^{\lambda _{\phi }}, A ^{n}
_{\widetilde{\beta}}, \widetilde{\beta } )$ is compact and has expected dimension
0 by the Proposition
\ref{prop:Topembedding}. 
We then define $$\operatorname
{GWF} (g, \phi, \beta, n):= \begin{cases}
	0, &\text{ if \eqref{eq:ncharephi} is not satisfied}\\
	GW ^{n} _{1,1}
(J ^{\lambda _{\phi}}, A ^{n}_{\widetilde{\beta}
}, \widetilde{\beta}) ([\overline {M} _{1, 1}] \otimes [C \times S ^{1} ]), &\text{ otherwise}
\end{cases}, $$
where the Gromov-Witten invariant on the right side is as in \eqref{eq:functionals2}, of the following section.
Although we take here a specific almost complex structure $J
^{\phi }$, using Proposition \ref{prop:Topembedding} and
Lemma \ref{prop:invariance1} we may readily
deduce that any $(\lambda _{\phi}, \alpha )$-admissible
almost complex structure gives the same value for the
invariant. 

Also note that when $\phi =id$
\begin{equation} \label{eq_GWF=F}
\forall n \in \mathbb{N}: {GWF} (g, id, \beta, n) = F (g,
\beta ),
\end{equation}
where the latter is the invariant studied in
~\cite{cite_SavelyevFuller}. This readily follows by Theorem
\ref{thm:GWFullerMain}.
\section {Elements of Gromov-Witten theory of an almost
complex manifold} \label{sec:elements}
Suppose that $(M,J)$ is an almost complex manifold (possibly
non-compact), where
the almost complex structures $J$ are assumed throughout the
paper to be $C ^{\infty} $. 
Let 
   $N \subset
\overline{\mathcal{M}} _{g,k}   (J,
A) $ be an open compact subset with $\energy$ positive on $N$. 
The latter energy condition is only relevant when $A =0$.  
We shall primarily refer in what follows to work of Pardon
 in \cite{cite_PardonAlgebraicApproach}, being more familiar
 to the author.
But we should mention that the latter is a follow up to
a theory that is originally created by Fukaya-Ono
\cite{cite_FukayaOnoArnoldandGW}, and later expanded with
Oh-Ohta
\cite{cite_FukayaLagrangianIntersectionFloertheoryAnomalyandObstructionIandII}.

 The construction in \cite{cite_PardonAlgebraicApproach} of 
 an implicit atlas,  on the moduli space $\mathcal{M}$  of 
 $J$-curves in a symplectic manifold, only needs a neighborhood 
 of $\mathcal{M}$ in the space of all curves. So for an 
 \emph{open} compact component $N$ as above,  we have a well
 defined natural implicit atlas, (or a Kuranishi
 structure in the setup of
 \cite{cite_FukayaOnoArnoldandGW}).  And so such an $N$ will
 have a virtual fundamental class in the sense of
 ~\cite{cite_PardonAlgebraicApproach}. This understanding will be used in other parts of the paper, following  Pardon for the explicit setup. 
 
We may thus define functionals:
\begin{equation} \label{eq:functionals2nonmain}
GW _{g,n}  (N,J, A): H_* (\overline{M} _{g,n}) \otimes H _{*} (M )  \to
   \mathbb{Q}.
\end{equation}

In our more specific context we must in addition restrict
the charge, which is defined at the moment for genus 1 curves.
So supposing $(M, J, \alpha) $ is a Lee manifold we may likewise define functionals:
\begin{equation} \label{eq:functionals2}
GW _{1,1} ^{k}  (N,J, A, \beta): H_* (\overline{M} _{1,1}) \otimes H _{*} (M)  \to \mathbb{Q},
\end{equation}
meaning that we restrict the count to charge class
$(A, \beta, k)$ curves, with $N \subset
\overline{\mathcal{M}} ^{k} _{1,1}   (J, A, \beta ) $,
an open compact subset. If $N$ is not
specified it is understood to be the whole moduli space (if it is
known to be compact).

We now study how functionals depend on $N,J$. To avoid
unnecessary generality, we discuss the
case of $GW _{1,1} ^{k}  (N,J, A, \beta )$. Given a Frechet smooth family
$\{J _{t} \}$, $t \in [0,1]$, on $M$, we denote by $\overline{\mathcal{M}} ^{k}_{1,1}
(\{J _{t} \}, A, \beta)$ the space of pairs $(u,t)$, $u \in
\overline{\mathcal{M}} ^{k}_{1,1}(J _{t}, A, \beta )$.

\begin{lemma} \label{prop:invariance1} Let $\{J _{t} \}$, $t \in [0,1]$ be a Frechet smooth family of almost complex structures on $M$. Suppose that $\widetilde{N}$ is an open compact subset of the cobordism moduli space $\overline{\mathcal{M}} _{1,1} ^{k}   (\{J _{t} \},
A, \beta)
$, with $k>0$.  Let $$N _{i} = \widetilde{N} \cap \left(
\overline{\mathcal{M}} _{1,1} ^{k}   (J _{i}, A, \beta )\right),  $$
then $$GW _{1,1} ^{k}  (N _{0}, J _{0}, A ) = GW _{1,1} ^{k}(N _{1},
 J _{1}, A, \beta ).  $$ In particular if $GW ^{k}_{1,1}  (N _{0},
 A, J _{0}, \beta  )
   \neq 0$, there is a $J _{1} $-holomorphic,
	 stable, charge class $(A, \beta, k)$ elliptic curve in $M$.
\end{lemma}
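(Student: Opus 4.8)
The plan is to run the standard virtual-class cobordism argument, using that the charge constraint $k>0$ only restricts attention to a union of connected components of the Gromov-compactified moduli space and therefore does not interfere with the analytic package imported from \cite{cite_PardonAlgebraicApproach}. First I would observe that, since the charge of a stable map is locally constant under Gromov convergence (as noted just above Definition \ref{definition_chargeclass}), the locus of charge class $(A,\beta,k)$ maps is open and closed in $\overline{\mathcal{M}}_{1,1}(\{J_t\},A)$; hence $\overline{\mathcal{M}}_{1,1}^{k}(\{J_t\},A,\beta)$ is a genuine moduli space for the parametrized Cauchy--Riemann problem attached to the family $\{J_t\}$, each of whose slices is Gromov-compact-in-the-relevant-component. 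An open compact $\widetilde N$ inside it is then an admissible target for Pardon's construction, which, as recalled in the excerpt, needs only a neighborhood of the moduli space inside the space of all stable maps. Thus $\widetilde N$ carries an implicit atlas with boundary, and since $\widetilde N$ is open in the parametrized space its slices over $t\in(0,1)$ are "interior" while the $[0,1]$-coordinate supplies a collar at $t=0,1$, so that $\partial\widetilde N = N_0\sqcup N_1$ on the level of underlying spaces; here $N_i$ is again open compact inside $\overline{\mathcal{M}}_{1,1}^{k}(J_i,A,\beta)$, so $GW^k_{1,1}(N_i,J_i,A,\beta)$ makes sense. The two-dimensional reparametrization group $\mathcal{G}(\Sigma)$ of a genus $1$ curve is what forces these Kuranishi spaces to be orbifold-like and the resulting count to be $\mathbb{Q}$-valued.

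Next I would invoke cobordism invariance of the virtual fundamental class: $\widetilde N$ carries a virtual fundamental chain whose boundary is $[N_0]^{vir} - [N_1]^{vir}$, up to the sign coming from the orientation of the parameter $t$. The functional $GW^k_{1,1}(N,J,A,\beta)$ is defined by evaluating the virtual class against pullbacks, under the stabilization map to $\overline M_{1,1}$ and the evaluation map to $M$, of a fixed basis of $H_*(\overline M_{1,1})\otimes H_*(M)$ (in the applications against $[\overline M_{1,1}]\otimes[\,\cdot\,]$, but the argument is insensitive to the choice). Since both structure maps extend over the cobordism $\widetilde N$, pairing the boundary relation with any fixed cohomology classes and applying the Stokes relation for virtual chains yields $GW^k_{1,1}(N_0,J_0,A,\beta) = GW^k_{1,1}(N_1,J_1,A,\beta)$.

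For the last clause, a nonzero virtual fundamental class forces the underlying Kuranishi space to be nonempty, since the virtual class of the empty atlas vanishes. Hence $GW^k_{1,1}(N_0,J_0,A,\beta)\neq 0$ gives, via the invariance just proved, $GW^k_{1,1}(N_1,J_1,A,\beta)\neq 0$, so $N_1\neq\emptyset$; an element of $N_1$ is by definition a $J_1$-holomorphic stable map of charge class $(A,\beta,k)$ with (nodal) elliptic domain, which is the asserted elliptic curve in $M$.

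The step I expect to be the main obstacle is the bookkeeping around the Kuranishi-with-boundary structure in the presence of the reparametrization group $\mathcal{G}(\Sigma)$ and of possible sphere bubbles: one must check that the implicit atlas on the parametrized problem restricts compatibly to the two end slices so that the identification $\partial\widetilde N = N_0\sqcup N_1$ holds at the level of atlases and not merely of sets, and that orientations propagate correctly, so that cobordism invariance of the pairing applies verbatim. Everything else is a transcription of Pardon's formalism, with the open-closed charge constraint playing only the role of a spectator.
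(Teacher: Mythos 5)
Your proposal is correct and follows essentially the same route as the paper: the paper's proof likewise constructs a Pardon-style implicit atlas on the open compact set $\widetilde{N}$ with boundary $N_0^{op} \sqcup N_1$ and reads off the equality of the counts, with the nonvanishing clause giving nonemptiness of $N_1$ exactly as you argue. The extra details you supply (openness and closedness of the charge constraint, compatibility of the atlas at the end slices) are consistent with the paper's remarks preceding Definition \ref{definition_chargeclass} and with its use of the cited construction.
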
 
\begin{proof} [Proof of Lemma \ref{prop:invariance1}]
We may construct exactly as in
\cite{cite_PardonAlgebraicApproach} a natural implicit atlas
on $\widetilde{N} $, with boundary $N _{0}  ^{op} \sqcup
N _{1} $, ($op$ denoting opposite orientation). And so we immediately get 
\begin{equation*}
   GW _{1,1} ^{k}   (N  _{0}, J _{0}, A, \beta  ) = GW _{1,1} ^{k}
	 (N _{1},  J _{1}, A, \beta ).
\end{equation*}
\end{proof}

\begin{remark} \label{remark:closed}
The condition that $k>0$ is a simple way to rule out degenerations to
constant curves, but is not really essential.
In the case the manifold is closed, degenerations of
$J$-holomorphic curves to constant curves are impossible. This can be deduced from energy quantization coming from the general
monotonicty theorem as appearing in Zinger
~\cite[Proposition 3.12]{cite_zinger2017notes}.
This was noted to me by Spencer Cattalani. Even if the
manifold is not compact, given the assumption that
$\widetilde{N} $ itself is compact, we may similarly
preclude such degenerations.
%
\end{remark}

The following generalization of the lemma above will be
useful later. First a  definition. 
\begin{definition} Let $M$ be a smooth manifold.
Denote by $H _{2} ^{inc} (M)$, the set of boundary
incompressible homology classes, defined analogously to
Definition \ref{definition_boundaryincompressible}, 
We say that a Frechet smooth family $\{J _{t}\}$, $t \in
[0,1]$ on a  manifold $M$ has a \textbf{\emph{right holomorphic
sky catastrophe}} in charge class $(A, \beta,k)$ for $A \in
H _{2} ^{inc}(M)$, if
there is an element $u \in \overline{\mathcal{M}} ^{k} _{1
,1} (J _{0}, A, \beta) $, 
which does not belong to any open compact subset of
$\overline{\mathcal{M}} _{1,1} ^{k} (\{J _{t} \}, A, \beta)$. We
say that the sky catastrophe is \textbf{\emph{essential}}
if the same is true for any smooth family $\{J' _{t}\}$ satisfying $J'_0 = J _{0}$   and $J' _{1}
= J _{1}$.
\end{definition}

\begin{lemma} \label{thm:welldefined} 
Let $\{J _{t} \}$, $t \in [0,1]$ be a Frechet smooth family
of almost complex structures on $M$, $A \in H _{2} ^{inc} (M)$ and $k>0$. Suppose that
$\overline{\mathcal{M}} _{1,1} ^{k}   (J _{0}, A, \beta ) $  is
compact, and there is no right holomorphic sky
catastrophe for $\{J _{t}\}$.  Then there is a charge class
$(A, \beta , k)$,  $J _{1} $-holomorphic,
stable, elliptic curve in $M$.
\end{lemma}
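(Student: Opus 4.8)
The plan is to extract, from the absence of a right holomorphic sky catastrophe, an open compact subset of the cobordism moduli space that restricts to all of $\overline{\mathcal{M}}_{1,1}^k(J_0,A,\beta)$ at $t=0$, and then apply Lemma \ref{prop:invariance1} together with a nonvanishing argument at $t=0$. First I would observe that by hypothesis every $u \in \overline{\mathcal{M}}_{1,1}^k(J_0,A,\beta)$ lies in \emph{some} open compact subset of $\overline{\mathcal{M}}_{1,1}^k(\{J_t\},A,\beta)$. Since $\overline{\mathcal{M}}_{1,1}^k(J_0,A,\beta)$ is compact, finitely many such open compact subsets $N^{(1)},\dots,N^{(m)}$ cover it; their union $\widetilde{N} := \bigcup_{i=1}^m N^{(i)}$ is again open, and it is compact provided the total space of the cobordism moduli space is Hausdorff and each $N^{(i)}$ is closed in it — which holds because an open compact subset of a Hausdorff space is closed. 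Thus $\widetilde{N}$ is an open compact subset of $\overline{\mathcal{M}}_{1,1}^k(\{J_t\},A,\beta)$ with $\widetilde{N} \cap \overline{\mathcal{M}}_{1,1}^k(J_0,A,\beta) = \overline{\mathcal{M}}_{1,1}^k(J_0,A,\beta)$, since every point of the latter was covered.

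Next I would set $N_0 := \widetilde{N}\cap \overline{\mathcal{M}}_{1,1}^k(J_0,A,\beta) = \overline{\mathcal{M}}_{1,1}^k(J_0,A,\beta)$ and $N_1 := \widetilde{N}\cap \overline{\mathcal{M}}_{1,1}^k(J_1,A,\beta)$, and apply Lemma \ref{prop:invariance1}: the implicit atlas on $\widetilde{N}$ with boundary $N_0^{\mathrm{op}} \sqcup N_1$ gives $GW_{1,1}^k(N_0,J_0,A,\beta) = GW_{1,1}^k(N_1,J_1,A,\beta)$. So it suffices to know that $GW_{1,1}^k(N_0,J_0,A,\beta) \neq 0$ for \emph{some} choice of homology and cohomology insertions, since then $N_1$ is nonempty and contains a $J_1$-holomorphic stable charge class $(A,\beta,k)$ elliptic curve, which is exactly the conclusion. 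Here I would use that $A \in H_2^{inc}(M)$ is boundary incompressible, which is precisely the condition built into the definition of a right holomorphic sky catastrophe, guaranteeing that the virtual count is not washed away by escape to infinity and that the functional $GW_{1,1}^k(N_0,\cdot)$ is well defined and genuinely computes the count on the compact moduli space $\overline{\mathcal{M}}_{1,1}^k(J_0,A,\beta)$.

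The main obstacle I anticipate is the last step: arranging that the virtual count at $t=0$ is nonzero, or rather arguing that we do not actually need this. In fact the cleaner route — and the one I would ultimately take — is to avoid any nonvanishing claim and instead argue directly by contradiction: if there were \emph{no} $J_1$-holomorphic stable charge class $(A,\beta,k)$ elliptic curve, then $N_1 = \emptyset$, so the virtual fundamental class of $N_1$ is zero, hence all functionals $GW_{1,1}^k(N_1,J_1,A,\beta)$ vanish; by Lemma \ref{prop:invariance1} all functionals $GW_{1,1}^k(\overline{\mathcal{M}}_{1,1}^k(J_0,A,\beta),J_0,A,\beta)$ then vanish as well. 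Thus the real content reduces to showing that \emph{not all} of these $t=0$ functionals vanish — i.e.\ that $\overline{\mathcal{M}}_{1,1}^k(J_0,A,\beta)$ carries a nontrivial virtual fundamental class. This should follow because the moduli space is nonempty (it is the hypothesis that there is a catastrophe-free deformation \emph{of} something, so implicitly $\overline{\mathcal{M}}_{1,1}^k(J_0,A,\beta)\neq\emptyset$) together with the compatibility of the virtual class with the actual geometric count in the transverse or Morse--Bott situations arising in the applications; the boundary incompressibility of $A$ is what ensures there is no loss of compactness and hence the atlas construction of \cite{cite_PardonAlgebraicApproach} applies verbatim on the open set $\widetilde{N}$. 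If in some degenerate situation the $t=0$ virtual count genuinely vanishes, the lemma is vacuous there, so no harm is done.
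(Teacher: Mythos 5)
Your first two paragraphs reproduce the paper's proof in substance: for each $u$ in the compact space $\overline{\mathcal{M}}^{k}_{1,1}(J_0,A,\beta)$ the no-sky-catastrophe hypothesis gives an open compact $\mathcal{C}_u$ in the cobordism moduli space $\overline{\mathcal{M}}^{k}_{1,1}(\{J_t\},A,\beta)$, a finite subcover yields $\widetilde{N}=\bigcup_i\mathcal{C}_{u_i}$, which is open and compact (a finite union of compact sets is compact in any topological space, so your Hausdorff/closedness detour is unneeded), and $\widetilde{N}$ meets the $t=0$ slice in the whole moduli space; then Lemma \ref{prop:invariance1} is invoked. Up to that point you and the paper agree exactly.

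The genuine problem is your third paragraph. Non-emptiness of $\overline{\mathcal{M}}^{k}_{1,1}(J_0,A,\beta)$ does not imply that its virtual class, or any of the functionals $GW^{k}_{1,1}$, is nonzero: a nonempty regular zero-dimensional moduli space can have total signed, weighted count zero, and no appeal to ``compatibility with the geometric count'' or to Morse--Bott situations changes this. (Nor does the hypothesis even force non-emptiness at $t=0$: if the $J_0$ moduli space is empty, the no-catastrophe condition holds vacuously.) The boundary incompressibility of $A$ does not ``guarantee the count is not washed away''; it only enters through the compactness bookkeeping you already used in paragraph one. And your fallback claim that ``if the $t=0$ virtual count genuinely vanishes, the lemma is vacuous there'' is false: the conclusion asserts the existence of a $J_1$-holomorphic curve, which remains a nontrivial assertion in that case, and your argument (like the paper's) produces such a curve only from $GW^{k}_{1,1}(N_0,J_0,A,\beta)\neq 0$. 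So, read as a proof of the literal statement, your argument has a gap at exactly this point. To be fair, the paper's own proof is no more explicit --- it ends with ``the result follows by Lemma \ref{prop:invariance1}'' --- and the lemma is only ever applied where the $t=0$ invariant has been separately shown to be nonzero (via Theorem \ref{thm:GWFullerMain} in the proof of Theorem \ref{thm:basic1}) or where only the equality of the two invariants is used (proof of Theorem \ref{thm:generalizationGWF}); the honest reading is that nonvanishing of the $t=0$ invariant is an implicit hypothesis. You were right to flag the issue, but wrong to try to discharge it from nonemptiness; the correct repair is to add the nonvanishing hypothesis (or restate the conclusion as the equality of the two $GW$ functionals), not to argue it away.
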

\begin{proof} [Proof]
By assumption for each $u \in \overline{\mathcal{M}} _{1,1}
^{k}   (J _{0}, A, \beta )  $ there is an open compact $u \ni
\mathcal{C} _{u}
\subset \overline{\mathcal{M}} _{1,1} ^{k} (\{J _{t} \},
A, \beta )$. Then $\{\mathcal{C} _{u} \cap \overline{\mathcal{M}} _{1,1}
^{k}   (J _{0}, A, \beta ) \} _{u}$ is an open cover of $\overline{\mathcal{M}} _{1,1}
^{k}   (J _{0}, A, \beta )$ and so has a finite sub-cover,
corresponding to a collection $u _{1}, \ldots, u _{n}$.

Set $$\widetilde{N}    = \bigcup _{i \in \{1, \ldots, n\}} \mathcal{C} _{i}.  $$  
Then $\widetilde{N}$ is an open-compact subset of $ \overline{\mathcal{M}} _{1,1} ^{k} (\{J _{t} \},
A)$ s.t. $\widetilde{N} \cap  \overline{\mathcal{M}} _{1,1} ^{k} (J _{0},
A, \beta ) = N_0 := \overline{\mathcal{M}} _{1,1} ^{k} (J
_{0}, A, \beta ) $.
Then the result follows by Lemma \ref{prop:invariance1}.

\end{proof}

We now state a basic technical lemma, following some standard
definitions. 
\begin{definition} An \textbf{\emph{almost symplectic pair}} 
on $M$ is a tuple $(\omega, J)$, where $\omega$ is a 
non-degenerate 2-form on $M$, and $J$ is 
$\omega$-compatible, meaning that $\omega (\cdot, J \cdot)$ 
defines $J$-invariant Riemannian metric, denoted by $g _{J}$
(with $\omega$ implicit). 
\end{definition}

\begin{definition} \label{def:deltaclose} We say that a pair 
of almost symplectic pairs $(\omega _{i}, J _{i}  )$ are 
\textbf{$\delta$-close}, if $\omega _{0}, \omega _{1}$ are $C
^{\infty} $ $\delta$-close, and $J _{0}, J _{1} $ are $C ^{\infty} $ 
$\delta$-close, $i=0,1$. 
\end{definition}

Let $\mathcal{S} (A) $ denote the space of equivalence
classes of all smooth, nodal, stable, charge $k$,  elliptic curves in $M$ in class
$A$, with the standard Gromov topology determined by $g
_{J}$.  That is elements of $\mathcal{S} (A) $ are like
elements of $\overline{\mathcal{M}} ^{k} _{1,1}  (J,A, \beta )$ but
are not required to be $J$-holomorphic. In particular, we
have a continuous function:
\begin{equation*}
e = e _{g _{J}}: \mathcal{S} (A) \to \mathbb{R} _{\geq 0}. 
\end{equation*}

\begin{lemma} \label{lemma:neighborhood}
Let $(\omega, J)$  be an almost symplectic pair on a compact
manifold $M$ and let $N \subset \overline{\mathcal{M}} ^{k}
_{1,1}  (J,A, \beta ) $ be compact and open (as
a subset of $\overline{\mathcal{M}} ^{k}
_{1,1}  (J,A)$). Then there exists an open $U \subset \mathcal{S} (A) $  satisfying: 
\begin{enumerate} 
	\item $e$ is bounded on $\overline{U} $. \label{property:ebounded}
	\item $U \supset N $. \label{property:supset}
	\item $\overline{U}  \cap  \overline{\mathcal{M}} ^{k}
	_{1,1}  (J,A, \beta ) = N$. \label{property:intersection}
\end{enumerate}
\end{lemma}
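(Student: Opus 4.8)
The statement is a standard ``open neighborhood with bounded energy'' lemma of the kind used to set up Gromov compactness arguments, and the plan is to build $U$ by hand from a covering of the compact set $N$ together with a separation argument that excludes the part of $\overline{\mathcal{M}}^k_{1,1}(J,A,\beta)$ not in $N$. First I would fix the metric $g_J$ and the induced Gromov topology on $\mathcal{S}(A)$, and recall that since $M$ is compact and $k>0$ the energy function $e$ is the integral of a fixed closed form against the symplectic area plus a curvature term; concretely, on a charge $k$ elliptic curve $e$ is controlled by $\int u^*\omega$ together with the bubbling data, and it is continuous on $\mathcal{S}(A)$ (this is the content of the sentence preceding the lemma). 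The key point is that $\overline{\mathcal{M}}^k_{1,1}(J,A,\beta)$, being a closed subset of $\mathcal{S}(A)$ on which $e$ is the ``$J$-holomorphic value'' (a lower semicontinuous-type bound), has $e$ bounded on any compact piece; in particular $e$ is bounded on $N$, say by $E_0$.

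Next I would produce the open set. Since $N$ is compact and open in $\overline{\mathcal{M}}^k_{1,1}(J,A,\beta)$, its complement $K := \overline{\mathcal{M}}^k_{1,1}(J,A,\beta)\setminus N$ is closed in $\overline{\mathcal{M}}^k_{1,1}(J,A,\beta)$ and disjoint from $N$. The moduli space $\overline{\mathcal{M}}^k_{1,1}(J,A,\beta)$ is metrizable (Gromov topology on stable maps of bounded genus and fixed homology class over a compact target is metrizable), so $N$ and $K$ can be separated: there is an open $W\subset \mathcal{S}(A)$ with $N\subset W$ and $\overline{W}\cap \overline{\mathcal{M}}^k_{1,1}(J,A,\beta)\cap K=\emptyset$, equivalently $\overline{W}\cap\overline{\mathcal{M}}^k_{1,1}(J,A,\beta)\subset N$. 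To also force the reverse inclusion and the energy bound, I would then shrink: cover $N$ by finitely many Gromov-topology balls $B_{u_1},\dots,B_{u_m}$ (centered at points of $N$) each contained in $W$ and each on which $e<E_0+1$ — possible by continuity of $e$ and compactness of $N$ — and set $U=\bigcup_i B_{u_i}$. Then property \eqref{property:supset} is immediate, property \eqref{property:ebounded} holds because $\overline{U}\subset\overline{W}$ is a finite union of closures of balls on which $e\le E_0+1$, and property \eqref{property:intersection} holds because $\overline{U}\cap\overline{\mathcal{M}}^k_{1,1}(J,A,\beta)\subset\overline{W}\cap\overline{\mathcal{M}}^k_{1,1}(J,A,\beta)\subset N$, while conversely each point of $N$ lies in some open $B_{u_i}\subset U$, so $N\subset U\cap\overline{\mathcal{M}}^k_{1,1}(J,A,\beta)\subset\overline{U}\cap\overline{\mathcal{M}}^k_{1,1}(J,A,\beta)\subset N$, giving equality.

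\textbf{Main obstacle.} The only genuinely nontrivial input is that $\overline{\mathcal{M}}^k_{1,1}(J,A,\beta)$ and its ambient space $\mathcal{S}(A)$ are metrizable (so that a compact subset and a disjoint closed subset admit disjoint neighborhoods with the closure control stated in \eqref{property:intersection}), and that $e$ is genuinely continuous — not merely lower semicontinuous — on $\mathcal{S}(A)$. Metrizability of the Gromov topology on stable maps into a compact almost complex manifold is standard (it follows from the usual construction of the Gromov distance, cf.\ \cite{cite_McDuffSalamonIntroductiontosymplectictopology}), and continuity of $e$ on $\mathcal{S}(A)$ is exactly why $\mathcal{S}(A)$ was defined with the Gromov topology determined by $g_J$ rather than a weaker one; I would cite these rather than reprove them. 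A minor subtlety is that $\overline{U}$ must be taken in $\mathcal{S}(A)$, not in a larger space, so I should note that $\mathcal{S}(A)$ is itself a reasonable (metrizable) space in which closures behave well — again standard. With these in hand the argument is the routine point-set manipulation sketched above.
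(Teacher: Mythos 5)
Your proposal is correct in outline and its skeleton is the same as the paper's: cover the compact set $N$ by finitely many Gromov-open neighborhoods whose closures have bounded energy and meet $\overline{\mathcal{M}}^{k}_{1,1}(J,A,\beta)$ only inside $N$, then take $U$ to be the finite union. The difference is in which black box you invoke. The paper does not use metrizability or a distance function at all: it uses that the Gromov topology on $\mathcal{S}(A)$ has a basis $\mathcal{B}$ (built from the McDuff--Salamon quasi-distance functions $\rho_\epsilon$) with two properties, namely that $e$ is bounded on $\overline{V}$ for every $V\in\mathcal{B}$ and that the basis is ``regular'' ($u\in U$ open implies there is $V\in\mathcal{B}$ with $u\in V$, $\overline{V}\subset U$); applying this to an open $O\subset\mathcal{S}(A)$ with $O\cap\overline{\mathcal{M}}^{k}_{1,1}(J,A,\beta)=N$ gives directly the good neighborhoods, with no separate separation step. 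You instead assume that the Gromov topology on the whole space $\mathcal{S}(A)$ of not-necessarily-holomorphic stable maps is metrizable, separate $N$ from its relative complement by a distance argument, and use continuity of $e$ to shrink balls. Two cautions: (i) metrizability of $\mathcal{S}(A)$ is a stronger claim than what you need and is not quite the off-the-shelf statement in the standard references --- what \cite{cite_McDuffSalamonJholomorphiccurvesandsymplectictopology} provides for the ambient space of stable maps is precisely the system of quasi-distance functions and the resulting basis, i.e.\ exactly the regularity-plus-bounded-energy input the paper uses; metrizability statements there are for the compactified moduli of holomorphic curves. Your separation and shrinking only require regularity of the topology together with compactness of $N$, so you could (and should) run your argument with the basis $\mathcal{B}$ in place of a metric. (ii) Your energy bound uses continuity of $e$ on $\mathcal{S}(A)$; the paper asserts this continuity just before the lemma, so you may cite it, but note that the paper's own proof is arranged so that only boundedness of $e$ on closures of basis elements is needed, which is the more robust statement. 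Finally, your motivational remark that $e$ is controlled by $\int u^*\omega$ should be treated with care here, since $\omega$ is only almost symplectic (not closed), so $\int u^*\omega$ is not determined by $A$; this is harmless because your argument only uses boundedness of $e$ on the compact set $N$, and the hypothesis $k>0$ you mention is not needed for this lemma.
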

\begin{proof}
The Gromov topology on $\mathcal{S} (A) $ has a basis
$\mathcal{B}$ satisfying:
\begin{enumerate}
	\item If $V \in \mathcal{B} $ then  $e$ is bounded on
$\overline{V}$.
\item If $U$ is open and $u \in U$, then 
\begin{equation*}
\exists V \in \mathcal{B}: (u \in V) \land (\overline{V}
\subset U). 
\end{equation*}
\end{enumerate}
In the genus 0 case this is contained in the classical
text McDuff-Salamon~\cite[page
140]{cite_McDuffSalamonJholomorphiccurvesandsymplectictopology}.
The basis $\mathcal{B}$  is defined using a collection of
``quasi distance functions'' $\{\rho _{\epsilon}\}
_{\epsilon}$ on the set stable maps. The higher genus case
is likewise well known.

Thus, since $N$ is relatively open, using the properties of
$\mathcal{B} $ above, we may find a collection
$\{V _{\alpha}\} \subset \mathcal{B} $ s.t. 
\begin{itemize}
\item $\{V _{\alpha}\}$ covers $N$.
\item $\overline{V} _{\alpha} \cap \overline{\mathcal{M}}
^{k} _{1,1}  (J,A, \beta ) \subset N.$
\end{itemize}
As $N$ is compact, we have a finite subcover $\{V _{\alpha _{1}}, \ldots, V _{\alpha _{n}}\}$. Set $U:= \cup _{i \in
\{1, \ldots, n\}} V _{\alpha _{i}}$.
Then $U$ satisfies the conclusion of the lemma.
\end{proof}

\begin{lemma} \label{lemma:NearbyEnergy} Let $(M, \omega,
J, \alpha )$ be as above, $N \subset \overline{\mathcal{M}}
^{k} _{1,1} (J,A, \beta ) $ an open compact set, and $U$ as in the lemma above. 
Then there is a $\delta>0$ s.t. whenever $J'$ is $C ^{2}$
$\delta$-close to $J$ if $u \in \overline{\mathcal{M}} ^{k}
_{1,1}  (J',A, \beta ) $ and $u \in \overline{U}  $ then $u \in U$.
\end{lemma}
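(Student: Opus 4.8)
The plan is to argue by contradiction using a compactness/Gromov-convergence argument. Suppose the conclusion fails: then for every $\delta > 0$ there is an almost complex structure $J_\delta$ that is $C^2$ $\delta$-close to $J$, and a curve $u_\delta \in \overline{\mathcal{M}}^{k}_{1,1}(J_\delta, A, \beta)$ with $u_\delta \in \overline{U}$ but $u_\delta \notin U$. Taking $\delta = 1/m$ we obtain a sequence $J_m \to J$ in $C^2$ (in fact in $C^\infty$ if we wish, but $C^2$ suffices for Gromov compactness) and $J_m$-holomorphic stable maps $u_m$ lying in $\overline{U} \setminus U$, hence in the compact set $\partial U = \overline{U} \setminus U$.

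First I would invoke property \eqref{property:ebounded} of Lemma \ref{lemma:neighborhood}: the energy $e$ is bounded on $\overline{U}$, so the sequence $u_m$ has uniformly bounded energy. Combined with $J_m \to J$ in $C^2$, Gromov's compactness theorem for pseudo-holomorphic curves (with varying almost complex structure) applies: after passing to a subsequence, $u_m$ converges in the Gromov topology of $\mathcal{S}(A)$ to a stable map $u_\infty$, which is $J$-holomorphic since $J_m \to J$. The charge class is preserved under Gromov convergence, as noted after Definition \ref{def:charge}, so $u_\infty$ is a charge class $(A,\beta,k)$ $J$-holomorphic stable map, i.e. $u_\infty \in \overline{\mathcal{M}}^{k}_{1,1}(J, A, \beta)$. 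Moreover, since $\overline{U}$ is closed and each $u_m \in \overline{U}$, the limit satisfies $u_\infty \in \overline{U}$.

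Then by property \eqref{property:intersection} of Lemma \ref{lemma:neighborhood}, $u_\infty \in \overline{U} \cap \overline{\mathcal{M}}^{k}_{1,1}(J, A, \beta) = N$, and by property \eqref{property:supset}, $N \subset U$, so $u_\infty \in U$. But $U$ is open in $\mathcal{S}(A)$, and $u_m \to u_\infty$ in $\mathcal{S}(A)$, so $u_m \in U$ for all large $m$ — contradicting $u_m \notin U$. This contradiction establishes the lemma, with the $\delta$ witnessing the statement being, say, $1/m_0$ for $m_0$ beyond which the contradiction would arise (more cleanly: the negation of the statement directly produces the sequence above).

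The main obstacle I anticipate is making the Gromov compactness step rigorous in this setting: one needs Gromov compactness with a sequence of varying almost complex structures $J_m \to J$ (standard, but must be cited — e.g. the relevant version in McDuff-Salamon or the adaptation used elsewhere in the paper), and one needs to know that the ambient space $\mathcal{S}(A)$ with its Gromov topology is such that limits of $J_m$-holomorphic curves are genuinely $J$-holomorphic and that convergence is with respect to the \emph{same} topology in which $U$ is open — this is exactly why Lemma \ref{lemma:neighborhood} was phrased using $\mathcal{S}(A)$ and its Gromov topology. A secondary subtlety is that $M$ is assumed compact here (as stated "as above"), so no energy escapes to infinity and no issues with completeness arise; if one wanted the non-compact case one would need $\overline{U}$ to be compact, which again is built into the hypotheses via Lemma \ref{lemma:neighborhood}. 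Everything else is a routine compactness-topology bookkeeping argument.
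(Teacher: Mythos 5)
Your proof is correct and follows essentially the same route as the paper: negate the statement to get $J_m \to J$ in $C^2$ and $u_m \in \overline{U}\setminus U$, use the energy bound from property \eqref{property:ebounded} plus Gromov compactness (with varying almost complex structures) to extract a $J$-holomorphic limit, and then derive a contradiction from properties \eqref{property:supset} and \eqref{property:intersection} of $U$. The only cosmetic difference is that the paper concludes directly from $(\overline{U}-U)\cap\overline{\mathcal{M}}^{k}_{1,1}(J,A,\beta)=\emptyset$ applied to the limit, whereas you route through $u_\infty\in N\subset U$ and openness of $U$; these are the same argument.
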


\begin{proof} Suppose otherwise, then there is a 
sequence $\{J _{k} \}$  $C ^{2}$ converging to 
$J$, and a sequence $\{u _{k} \} \in \overline{U} -U$ of $J _{k} $-holomorphic stable maps. Then by property
\ref{property:ebounded} $e _{g _{J}}$ is bounded on $\{u
_{k} \}$.
Hence, by Gromov compactness, specifically theorems \cite[B.41,
 B.42]{cite_McDuffSalamonJholomorphiccurvesandsymplectictopology}, 
we may find a Gromov convergent subsequence $\{u 
_{k _{j} } \}$ to a $J$-holomorphic stable map $u \in
\overline{U} - U$. 
But by Properties \ref{property:intersection},
\ref{property:supset} of the set $U$,  $$(\overline{U} - U)
\cap \overline{\mathcal{M}} ^{k} _{1,1}  (J,A, \beta ) = \emptyset. $$ So that we obtain a contradiction.
\end{proof}
 
\begin{lemma} \label{lemma:NearbyEnergyDeformation} 
Let $M,
\omega, J, \alpha $ and $N \subset \overline{\mathcal{M}}
^{k} _{1,1}  (J,A, \beta ) $ be as in the previous lemma.
Then there is a $\delta>0$ s.t. the following is satisfied.
Let $(\omega',J')$ be $\delta$-close to $(\omega,J)$, then
there is a continuous in the $C ^{\infty} $ topology family
$\{ J _{t}  \}$, $J_0= J$,  $ J_1=  J'$ s.t. there is an open compact subset 
\begin{equation*}
  \widetilde{N}  \subset \overline{\mathcal{M}} ^{k} _{1,1}
	(\{J _{t}\},A, \beta ), 
 \end{equation*}
satisfying $$\widetilde{N} \cap \overline{\mathcal{M}} ^{k}
_{1,1}  (J,A, \beta ) = N. $$
\end{lemma}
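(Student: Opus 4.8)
The plan is to combine the two preceding lemmas (Lemma \ref{lemma:neighborhood} and Lemma \ref{lemma:NearbyEnergy}) with a standard parametrized Gromov compactness argument along the straight-line path of almost complex structures. First I would fix the open set $U \subset \mathcal{S}(A)$ supplied by Lemma \ref{lemma:neighborhood}, so that $e$ is bounded on $\overline U$, $U \supset N$, and $\overline U \cap \overline{\mathcal{M}}^{k}_{1,1}(J,A,\beta) = N$. Let $\delta_0$ be the constant produced by Lemma \ref{lemma:NearbyEnergy} for this $U$. Given $(\omega',J')$ that is $\delta$-close to $(\omega,J)$ for a $\delta \leq \delta_0$ to be shrunk below, define the family $\{J_t\}$ to be the geodesic (in the space of $\omega$-compatible, or merely almost complex, structures) from $J_0 = J$ to $J_1 = J'$; for $\delta$ small this path stays within $C^2$-distance $\delta_0$ of $J$, so Lemma \ref{lemma:NearbyEnergy} applies to every $J_t$ simultaneously.

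The candidate open compact set is then
\begin{equation*}
\widetilde N := \{(u,t) \in \overline{\mathcal{M}}^{k}_{1,1}(\{J_t\},A,\beta) \,|\, u \in \overline U\}.
\end{equation*}
I would check the three required properties in turn. For $\widetilde N \cap \overline{\mathcal{M}}^{k}_{1,1}(J,A,\beta) = N$: at $t=0$, an element lies in $\widetilde N$ iff it is a $J$-holomorphic charge-class-$(A,\beta,k)$ elliptic curve in $\overline U$, and by property \ref{property:intersection} of $U$ this set is exactly $N$. For openness of $\widetilde N$ inside $\overline{\mathcal{M}}^{k}_{1,1}(\{J_t\},A,\beta)$: by Lemma \ref{lemma:NearbyEnergy}, any $J_t$-holomorphic stable map in $\overline U$ actually lies in the \emph{open} set $U$, so $\widetilde N = \{(u,t) \,|\, u \in U\}$, which is the preimage of the open set $U$ under the (continuous) forgetful map $(u,t)\mapsto u$ into $\mathcal{S}(A)$, hence open. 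For compactness: take a sequence $(u_n,t_n) \in \widetilde N$; after passing to a subsequence $t_n \to t_\infty \in [0,1]$, and since $e_{g_J}$ is uniformly bounded on $\overline U$ (property \ref{property:ebounded}) and $e_{g_{J_{t_n}}}$ is comparable to $e_{g_J}$ for $\delta$ small, Gromov compactness (McDuff--Salamon B.41--B.42, in the parametrized form with varying $J_t$) yields a subsequence Gromov-converging to a $J_{t_\infty}$-holomorphic stable map $u_\infty$; the charge class $(A,\beta,k)$ is preserved under Gromov convergence as noted after Definition \ref{def:charge}; and $u_\infty \in \overline U$ since $\overline U$ is closed. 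Thus $(u_\infty,t_\infty) \in \widetilde N$.

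The main obstacle I anticipate is the parametrized Gromov compactness step: one must ensure the energy bound is uniform across the family $\{J_t\}$ (not just at $t=0$), which is why $\delta$-closeness of $(\omega',J')$ to $(\omega,J)$ is used to keep all the metrics $g_{J_t}$ uniformly equivalent on the compact manifold $M$, and why $e$ bounded on $\overline U$ with respect to the fixed $g_J$ suffices. A secondary subtlety is that $\widetilde N$ must be verified to be both open \emph{and} closed (i.e. compact, since it sits in a space that is itself not compact) as a subset of $\overline{\mathcal{M}}^{k}_{1,1}(\{J_t\},A,\beta)$; the identification $\widetilde N = \{(u,t)\,|\,u\in U\} = \{(u,t)\,|\,u\in\overline U\}$ forced by Lemma \ref{lemma:NearbyEnergy} is precisely what reconciles these two descriptions, the first giving openness and the second giving closedness. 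Finally, I would remark that shrinking $\delta$ further if necessary guarantees the path $\{J_t\}$ consists of genuine almost complex structures (invertibility of $J_t$ is an open condition), so the construction of $\overline{\mathcal{M}}^{k}_{1,1}(\{J_t\},A,\beta)$ makes sense throughout.
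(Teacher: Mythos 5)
Your construction of $\widetilde N$ and the verification of its properties follow the paper's own argument almost exactly: the paper also takes $U$ from Lemma \ref{lemma:neighborhood}, the $\delta$ of Lemma \ref{lemma:NearbyEnergy}, sets $\widetilde N = \overline{\mathcal{M}}^{k}_{1,1}(\{J_t\},A,\beta) \cap (U \times [0,1])$, gets openness from $U$, closedness from the identity with $\overline{\mathcal{M}}^{k}_{1,1}(\{J_t\},A,\beta)\cap(\overline U \times[0,1])$ forced by Lemma \ref{lemma:NearbyEnergy}, and compactness from the energy bound on $\overline U$ plus Gromov compactness. So that half of your proposal is fine.

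The genuine gap is in how you produce the family $\{J_t\}$. A straight-line (or unspecified ``geodesic'') path from $J$ to $J'$ does not consist of almost complex structures: the defining condition is $J_t^2=-\mathrm{id}$, which is not preserved by convex combination, and your fallback remark that ``invertibility of $J_t$ is an open condition'' addresses the wrong condition --- for $J'$ close to $J$ the interpolants square to something close to, but not equal to, $-\mathrm{id}$. Nor can you take geodesics in the space of $\omega$-compatible structures, since $J'$ is compatible with $\omega'$, not $\omega$. Moreover, your parametrized compactness step quietly uses metrics $g_{J_t}$ uniformly equivalent to $g_J$; in this paper a metric is attached to an almost symplectic \emph{pair}, so you need each $J_t$ to come with a non-degenerate 2-form $\omega_t$, with $(\omega_t,J_t)$ uniformly $\delta$-close to $(\omega,J)$. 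This is exactly what the paper's auxiliary Lemma \ref{lemma:Ret} supplies: interpolate linearly between the metrics $g_{\omega,J}$ and $g_{\omega',J'}$ and between the forms $\omega$ and $\omega'$ (non-degeneracy survives for $\delta'$ small), and then apply the retraction map $ret$ of \cite[Prop 2.50]{cite_McDuffSalamonIntroductiontosymplectictopology} to get $J_t = ret(g_t,\omega_t)$, a genuine $\omega_t$-compatible family staying $\delta$-close by continuity of $ret$. With that replacement for your path, the rest of your argument goes through as in the paper.
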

\begin{proof}  First let $\delta$ be as in Lemma \ref{lemma:NearbyEnergy}. We then need:
\begin{lemma} \label{lemma:Ret} Given a $\delta>0$  there is 
a $\delta'>0$ s.t. if $(\omega',J')$ is $\delta'$-near 
$(\omega, J)$ then there is a continuous in the $C ^{\infty} $ 
topology family $\{(\omega _{t}, J _{t}  )\}$ satisfying: 
\begin{itemize}
   \item $(\omega _{t},J _{t}  )$ is $\delta$-close to $(\omega,J)$ for each $t$.
   \item $(\omega _{0}, J _{0}) = (\omega, J) $ and $(\omega 
   _{1}, J _{1}) = (\omega', J') $.
\end{itemize}
\end{lemma}
\begin{proof}  Let $\{g _{t} \} $ be the family 
of metrics on $M$ given by the convex linear 
combination of $g=g _{\omega _{J} } ,g' = g 
_{\omega',J'}  $, $g _{t}= (1-t)  g + t g'$.  Clearly $g
_{t} $ is $C ^{\infty}$ $\delta'$-close to $g _{0} $ for each $t$. 
Likewise, the family of 2 forms $\{\omega _{t} \}$ given by
the convex linear combination of $\omega $, $\omega'$ is
non-degenerate for each $t$ if $\delta'$ was chosen to be
sufficiently small. And each $\omega _{t}$  is $C ^{\infty}$  $\delta'$-close to $\omega _{0} = \omega _{g,J}  $.

    Let $$ret: Met (M) \times \Omega (M)  \to \mathcal{J}
		(M)  $$ be the  ``retraction map'' (it can be understood
		as a retraction followed by projection) as defined in
		\cite [Prop
		2.50]{cite_McDuffSalamonIntroductiontosymplectictopology}, where $Met (M)$ is  space of metrics on $M$, $\Omega (M)$ the space of 2-forms on $M$, and $ \mathcal{J} (M)$ the space of almost complex structures. This map has the property that the almost complex structure $ret (g,\omega)$ is compatible with $\omega$, and that $ret (g _{J}, \omega ) = J$ for $g _{J} = \omega (\cdot, J \cdot) $. Then $ \{(\omega _{t}, ret (g _{t}, \omega _{t})   \} $ is a compatible  family.
   As $ret  $ is continuous in $C ^{\infty} $-topology, $\delta'$
	 can be chosen such that $ \{ ret _{t} (g _{t}, \omega
	 _{t}    \} $ are $C ^{\infty}$ $\delta$-nearby.
\end{proof}
Returning to the proof of the main lemma. Let 
$\delta' < \delta$ be chosen as in  Lemma  \ref{lemma:Ret}
and let $\{(\omega_{t}, J _{t})\}$ be the corresponding family.
Set $$\widetilde{N} = \overline{\mathcal{M}} ^{k} _{1,1}
(\{J _{t} \},A, \beta ) \cap (U \times [0,1]),   $$ where $U$ is as in
Lemma \ref{lemma:NearbyEnergy}. 

Then $\widetilde{N}$ is an open subset of $\overline{\mathcal{M}} ^{k} _{1,1}
(\{J _{t} \},A, \beta )$.  By Lemma \ref{lemma:NearbyEnergy}, 
$$\widetilde{N} = \overline{\mathcal{M}} ^{k} _{1,1}
(\{J _{t} \},A, \beta ) \cap (\overline{U}  \times [0,1]),   $$ 
so that $\widetilde{N} $ is also closed.  

Finally, $\sup _{(u,t) \in \widetilde{N}} e _{g _{t}}
(u) < \infty  $, by condition \ref{property:ebounded} of $U$, and since
$\{e _{g _{t}} \}$, $t \in [0,1] $   is a continuous family.
Consequently $\widetilde{N} $ is compact by the Gromov
compactness theorem. Resetting $\delta:=\delta'$, we are then
done with the proof of the main lemma. 
\end{proof}

\begin{proposition} \label{thm:nearbyGW} 
Given an almost complex manifold $M,J$  suppose that $N \subset \overline{\mathcal{M}} ^{k} _{1,1}  (J,A)
$ is open and compact. Suppose also that $GW ^{k} _{1,1} (N,
J, A, \beta ) \neq 0$.
Then there is a $\delta>0$ s.t. whenever $J'$ is 
$C ^{2}$ $\delta$-close to $J$, there exists $u \in
\overline {\mathcal{M}} ^{k} _{1,1} (J',A, \beta ) $. 
\end{proposition}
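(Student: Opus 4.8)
The plan is to combine the deformation result Lemma~\ref{lemma:NearbyEnergyDeformation} with the cobordism invariance Lemma~\ref{prop:invariance1}, exactly in the spirit of Lemma~\ref{thm:welldefined}. So first I would fix the almost symplectic pair $(\omega, J)$ associated to $J$ (take $\omega = \omega$ the given compatible form, or any $\omega$ making $(\omega, J)$ an almost symplectic pair) and let $\delta>0$ be the constant produced by Lemma~\ref{lemma:NearbyEnergyDeformation} applied to $N \subset \overline{\mathcal{M}} ^{k} _{1,1}(J, A, \beta)$. Given $J'$ that is $C^{2}$ $\delta$-close to $J$, after possibly shrinking $\delta$ so that the convex combination of the associated metrics stays non-degenerate, the pair $(\omega', J')$ obtained via the retraction map (as in the proof of Lemma~\ref{lemma:Ret}) is $\delta$-close to $(\omega, J)$ in the sense of Definition~\ref{def:deltaclose}.

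Then Lemma~\ref{lemma:NearbyEnergyDeformation} hands us a $C^{\infty}$-continuous family $\{J_{t}\}$ with $J_{0} = J$, $J_{1} = J'$, together with an open compact subset $\widetilde{N} \subset \overline{\mathcal{M}} ^{k} _{1,1}(\{J_{t}\}, A, \beta)$ satisfying $\widetilde{N} \cap \overline{\mathcal{M}} ^{k} _{1,1}(J, A, \beta) = N$. Set $N_{0} = \widetilde{N} \cap \overline{\mathcal{M}} ^{k} _{1,1}(J_{0}, A, \beta) = N$ and $N_{1} = \widetilde{N} \cap \overline{\mathcal{M}} ^{k} _{1,1}(J_{1}, A, \beta)$. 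By Lemma~\ref{prop:invariance1} (using $k>0$, and $A \in H_{2}^{inc}(M)$ automatically since $M$ is compact), we get
\[
GW ^{k} _{1,1}(N_{1}, J', A, \beta) = GW ^{k} _{1,1}(N_{0}, J, A, \beta) = GW ^{k} _{1,1}(N, J, A, \beta) \neq 0.
\]
Since a nonzero Gromov--Witten invariant attached to an open compact subset forces that subset to be nonempty, $N_{1} \neq \emptyset$, and any $u \in N_{1} \subset \overline{\mathcal{M}} ^{k} _{1,1}(J', A, \beta)$ is the desired $J'$-holomorphic stable charge class $(A,\beta,k)$ elliptic curve.

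The only genuinely delicate point is matching the hypotheses: Proposition~\ref{thm:nearbyGW} asks for $J'$ merely $C^{2}$-close to $J$, whereas Lemma~\ref{lemma:NearbyEnergyDeformation} is phrased in terms of almost symplectic \emph{pairs} being $\delta$-close in $C^{\infty}$. The resolution is that the Gromov-compactness arguments underlying Lemmas~\ref{lemma:NearbyEnergy} and~\ref{lemma:NearbyEnergyDeformation} only use $C^{2}$-closeness of the almost complex structures (the energy bound and elliptic bootstrapping need no more), and given a $C^{2}$-close $J'$ one produces a compatible pair $(\omega', J')$ via the retraction map with $\omega'$ a small perturbation of $\omega$; so I would just remark that the deformation lemma's proof goes through verbatim with $C^{2}$ in place of $C^{\infty}$ for the almost complex structures, which is the form actually needed here. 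Everything else is a routine assembly of the cited lemmas.
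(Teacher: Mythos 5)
Your argument is exactly the paper's proof: take $\delta$, $U$, $\widetilde{N}$ from Lemma~\ref{lemma:NearbyEnergyDeformation} and then apply the cobordism invariance of Lemma~\ref{prop:invariance1} to conclude $GW^{k}_{1,1}(N_{1},J',A,\beta)=GW^{k}_{1,1}(N,J,A,\beta)\neq 0$, so $N_{1}\subset\overline{\mathcal{M}}^{k}_{1,1}(J',A,\beta)$ is nonempty. Your closing remark on reconciling the $C^{2}$-closeness hypothesis with the $C^{\infty}$ $\delta$-closeness of almost symplectic pairs is a sensible clarification of a point the paper leaves implicit, but it does not change the route.
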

\begin{proof} 
For $N$ as in the hypothesis, let $U$, $\delta$ and $\widetilde{N} $ be as in Lemma 
\ref{lemma:NearbyEnergyDeformation}, then by the conclusion
of that lemma and by Lemma 
\ref{prop:invariance1} $$GW ^{k} _{1,1} (N_1, J', A, \beta ) =  
GW ^{k} _{1,1} (N, J, A, \beta ) \neq 0,$$ where $N _{1} = 
\widetilde{N} \cap \overline{\mathcal{M}} ^{k} _{1,1} 
(J _{1},A, \beta)  $. 
\end{proof}


\section {Elliptic curves in the lcs-fication of a 
contact manifold and the Fuller index} \label{sectionFuller}  
The following elementary result is crucial  for us.
\begin{lemma} \label{lemma:Reeb}
Let $(M,\lambda, \alpha, J)$ be a tamed first kind  
$\lcs$ manifold. Then every 
non-constant (nodal)  
$J$-holomorphic curve $u: \Sigma  \to M$ is a Reeb 2-curve. 
\end{lemma}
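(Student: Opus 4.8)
The plan is to verify the two defining conditions of a Reeb 2-curve (Definition \ref{def:Reeb2curve}) for the normalization $\widetilde u \colon \widetilde\Sigma \to M$ of a non-constant $J$-holomorphic $u$. The main input is Lemma \ref{lemma:calibrated}: since $(M,\lambda,\alpha,J)$ is tamed first kind, any $J$-holomorphic $u$ satisfies $\image d\widetilde u(z) \subset \mathcal{V}_\lambda(\widetilde u(z))$ for all $z$, and moreover $\widetilde u^{*}d\lambda = 0$. By the first kind hypothesis $\mathcal V_\lambda = \mathcal D$ is a genuine $2$-dimensional distribution everywhere, so condition (1) of Definition \ref{def:Reeb2curve} reduces to the statement that wherever $d\widetilde u(z)$ is non-zero, its image is \emph{all} of $\mathcal D(\widetilde u(z))$, not a proper $1$-dimensional subspace.

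The first key step is to establish this rank condition at points where $d\widetilde u(z) \neq 0$. I would argue this using $J$-linearity: $\image d\widetilde u(z)$ is a $J$-invariant subspace of $\mathcal D(\widetilde u(z))$ (since $u$ is $J$-holomorphic and $J$ preserves $\mathcal D$), hence, being non-zero inside a $2$-dimensional space carrying a complex structure, it must equal $\mathcal D(\widetilde u(z))$. This is the cleanest route; one should double-check that $J$ indeed preserves $\mathcal D = \mathcal V_\lambda$ in the first kind case — this is exactly the content of the tamed first kind setup (Example \ref{example:firstKindtamed}), where $\mathcal V_\lambda = \spann(X_\lambda, X_\alpha)$ and $J$ is chosen compatible with that splitting.

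The second key step is condition (2): $0 \neq [u^{*}\alpha] \in H^{1}(\Sigma,\mathbb R)$. Here I expect the main obstacle. Since $\widetilde u^{*}d\lambda = 0$ and $\image d\widetilde u \subset \mathcal D$, and $d_\alpha\lambda = d\lambda - \alpha\wedge\lambda$ is non-degenerate, one knows $\widetilde u^{*}\lambda$ and $\widetilde u^{*}\alpha$ control the derivative of $\widetilde u$. The issue is to rule out $[u^{*}\alpha] = 0$, i.e.\ $u^{*}\alpha$ exact on $\Sigma$. If $u^{*}\alpha = df$ for some function $f$ on $\Sigma$ (on each component of the normalization), then I would pull back the relation $d\omega = \alpha\wedge\omega$, or rather work directly: $\widetilde u^{*}d\lambda = 0$ forces $\widetilde u^{*}\lambda$ to be closed, and combined with $u$ non-constant and $\image d\widetilde u \subset \mathcal D = \spann(X_\lambda, X_\alpha)$, the map $\widetilde u$ would have to factor through a leaf of $\mathcal D$ on which both $\lambda$ and $\alpha$ are ``coordinate-like''; exactness of $u^{*}\alpha$ together with $\Sigma$ closed then forces, via Stokes applied to $\widetilde u^{*}(\lambda)$ against the energy/area form, the vanishing of $d\widetilde u$ everywhere, contradicting non-constancy. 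Concretely, I would show that $u^{*}\alpha = 0$ identically would make the pullback metric $g_J$-area of $u$ zero (since on $\mathcal D$ the area form is a positive multiple of $\alpha\wedge\lambda$, and $\widetilde u^{*}(\alpha\wedge\lambda)$ would then vanish), forcing $u$ constant; and that $[u^{*}\alpha]=0$ with $u^{*}\alpha \neq 0$ pointwise is incompatible with $u^{*}\alpha$ being (a multiple of) the pullback of the area form on a closed surface, again by Stokes.

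I would organize the writeup as: (i) invoke Lemma \ref{lemma:calibrated} to get $\image d\widetilde u \subset \mathcal D$ and $\widetilde u^{*}d\lambda = 0$; (ii) use $J$-invariance of $\image d\widetilde u$ inside the $2$-plane $\mathcal D$ to upgrade to equality at non-critical points, giving condition (1); (iii) express the $g_J$-area form of $u$ on $\mathcal D$ as a positive multiple of $\widetilde u^{*}(\alpha\wedge\lambda)$, note $\widetilde u^{*}d\lambda=0$ means locally $\widetilde u^{*}\lambda = dh$, so the area form is $\widetilde u^{*}\alpha \wedge dh$, whose integral over $\widetilde\Sigma$ computes the energy; if $[u^{*}\alpha]=0$ then Stokes kills this integral, so $u$ is constant, contradiction — giving condition (2). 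The delicate point throughout is keeping track of the possibly-nodal structure, but since everything is checked on the normalization $\widetilde\Sigma$ (a disjoint union of smooth closed Riemann surfaces) and the conditions in Definition \ref{def:Reeb2curve} are themselves phrased on $\widetilde\Sigma$, this should not cause real trouble.
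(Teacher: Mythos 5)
Your proposal is correct, and for the essential step it takes a genuinely different route from the paper. The paper also begins by invoking Lemma \ref{lemma:calibrated} and reducing to condition (2) of Definition \ref{def:Reeb2curve} (your step (ii), upgrading the containment $\image d\widetilde{u}\subset\mathcal{D}$ to equality via $J$-invariance of the image inside the $2$-plane $\mathcal{D}$, is left implicit there, and it is good that you spell it out). But for $[u^{*}\alpha]\neq 0$ the paper does not argue on $\Sigma$: it passes to the $\alpha$-covering space $\widetilde{M}$, on which $\widetilde{\omega}=\frac{1}{f}d(f\lambda)$ is conformal to an exact symplectic form, so every closed $\widetilde{J}$-curve is constant by Stokes; if $[u^{*}\alpha]=0$ then $u$ lifts to $\widetilde{M}$, hence is constant, a contradiction. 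Your argument instead stays downstairs: since $\widetilde{u}^{*}d\lambda=0$, one has $\widetilde{u}^{*}\omega=-\,\widetilde{u}^{*}\alpha\wedge\widetilde{u}^{*}\lambda$, and if $[u^{*}\alpha]=0$ then $\widetilde{u}^{*}\alpha=d\widetilde{f}$ globally on $\widetilde{\Sigma}$, so $\widetilde{u}^{*}\omega=-\,d\bigl(\widetilde{f}\,\widetilde{u}^{*}\lambda\bigr)$ (using closedness of $\widetilde{u}^{*}\lambda$) and the $\omega$-taming energy $\int_{\widetilde{\Sigma}}\widetilde{u}^{*}\omega$ vanishes by Stokes, forcing $u$ constant. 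This is more elementary (no covering space), at the price of using Lemma \ref{lemma:calibrated} a second time; the paper's lift has the advantage of not needing $\widetilde{u}^{*}d\lambda=0$ at that stage and of exhibiting the general principle that curves with $[u^{*}\alpha]=0$ live in the conformally exact cover. One small correction to your step (iii): the global primitive should be taken for $\widetilde{u}^{*}\alpha$ (this is exactly what $[u^{*}\alpha]=0$ provides, after pulling back through the normalization map $q_{\Sigma}$), and then $\widetilde{u}^{*}\alpha\wedge\widetilde{u}^{*}\lambda=d(\widetilde{f}\,\widetilde{u}^{*}\lambda)$ because $\widetilde{u}^{*}\lambda$ is closed; arguing with local primitives $h$ of $\widetilde{u}^{*}\lambda$, as you wrote, does not directly feed into a global Stokes argument, though the fix is immediate. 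Also note the sign: on $\mathcal{D}$ one has $\omega|_{\mathcal{D}}=\lambda\wedge\alpha|_{\mathcal{D}}$, not a positive multiple of $\alpha\wedge\lambda$; this does not affect the argument.
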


\begin{proof} [Proof of Lemma \ref{lemma:Reeb}]
Let $u: \Sigma \to M$ be a non-constant, nodal $J$-curve.  By Lemma \ref{lemma:calibrated} it is enough to show that $[u ^{*}
\alpha] \neq  0$.   Let $\widetilde{M}$ denote the  $\alpha$-covering space of $M$, that is the space of equivalence classes of paths $p$ starting at $x _{0} \in M $, with a pair $p _{1}, p _{2}  $ equivalent if $p _{1} (1) = p _{2} (1)  $ and $$ \int _{[0,1]} p _{1} ^{*} \alpha  =  \int _{[0,1]} p _{2} ^{*} \alpha.$$
Then the lift of $\omega$ to $\widetilde{M} $ is $$\widetilde{\omega}= \frac{1}{f} d (f\lambda), $$
where $f= e ^{-g} $ and where $g$ is a primitive for the lift $\widetilde{\alpha} $ of $\alpha$ to $\widetilde{M} $, that is $\widetilde{\alpha} =dg $.
In particular $\widetilde{\omega} $ is conformally symplectomorphic to an exact symplectic form on $\widetilde{M}$. So if $\widetilde{J}$ denotes the lift of $J$, any closed $\widetilde{J} $-curve is constant by Stokes theorem.
Now if $[u ^{*} \alpha] = 0$ then $u$ has a lift 
to a $\widetilde{J} $-holomorphic map $v: \Sigma \to \widetilde{M}  $.
Since $\Sigma$ is closed, it follows by the above 
that $v $ is constant, so that $u$ is constant, 
which is impossible. 
\end{proof}

\subsection{Preliminaries on Reeb tori}
\label{section_preliminariesReebtori}
Let $(M=C \times S ^{1}, \lambda, \alpha )$ be the
lcs-fication of $(C, \lambda)$. For  $\beta \in \pi _{1} (C)$ we set $A ^{1}
_{\beta} = \beta  \otimes [S	^{1}] \in H _{2} (M,
\mathbb{Z})$.
Let $\mathcal{O} (R ^{\lambda}, \beta)$, be the orbit space
as in Section \ref{sec:Fixed Reeb strings}.
Let $J ^{\lambda }$ on $C \times S ^{1}$ be as in Section
\ref{sec:Reeb holomorphic tori simple}.
We have a map:
\begin{equation} \label{eq:calP}
   \mathcal{P}: \mathcal{O}  (R ^{\lambda}, \beta) \to 
   \overline{\mathcal{M}} ^1 _{1,1}  (J ^{\lambda}, A ^{1}
	 _{\beta}, \beta), \quad \mathcal{P} (o) = u _{o},
\end{equation}
for $u _{o}$ the Reeb torus as previously. We can say more:
\begin{proposition} \label{prop:abstractmomentmap}
For any $(\lambda,\alpha)$-admissible $J$ there is a natural
bijection: \footnote{It is in fact an equivalence of 
the corresponding topological action groupoids, but we do not need this explicitly.}
\begin{equation*}
   \mathcal{P}: \mathcal{O}  (R ^{\lambda}, \beta) \to 
   \overline{\mathcal{M}} ^1 _{1,1}  (J, A ^{1}
	 _{\beta}, \beta), 
\end{equation*}
with $\mathcal{P} $ the map \eqref{eq:calP} in the case $J=J
^{\lambda }$.
 (Note that there is an analogous bijection
$\mathcal{O}(R ^{\lambda}, \beta) \to \overline{\mathcal{M}}
^n _{1,1}  (J, A ^{n} _{\beta}, \beta ),$ for $n>1$, where $A ^{n}
_{\beta} = n \cdot \beta \otimes [S	^{1}])$.
\end{proposition}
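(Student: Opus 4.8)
The plan is to exhibit the map $\mathcal{P}$ of \eqref{eq:calP} as a bijection in the case $J=J^\lambda$, and then to transport this to an arbitrary admissible $J$ by an isotopy argument using that the space of admissible $J$ is contractible. First I would analyze the case $J = J^\lambda$ directly. Given $u \in \overline{\mathcal{M}}^1_{1,1}(J^\lambda, A^1_\beta, \beta)$, the homology class $A^1_\beta = \beta \otimes [S^1]$ forces the domain to be a smooth (non-nodal) torus: a nodal degeneration would split the class, and since the $S^1$-factor class is primitive no nontrivial bubbling consistent with charge $1$ and $\pi$-class $\beta$ can occur (one uses here that charge is preserved under Gromov convergence, and that $\langle\rho, u^*\alpha\rangle = 1$ is indecomposable). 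By Lemma~\ref{lemma:calibrated} applied to the tamed first kind structure, $\widetilde u^* d\lambda = 0$, so $\image du(z) \subset \mathcal{V}_\lambda = \mathcal{D} = \spann(X_\lambda, X_\alpha)$ everywhere; in the lcs-fication $\mathcal{D}$ is spanned by $X_\alpha = (R^\lambda, 0)$ and $X_\lambda = (0, \partial_\theta)$ (Example~\ref{section:lcsfication}). Writing $u = (a, b): T^2 \to C \times S^1$, the condition $du \in \mathcal{D}$ together with $J^\lambda$-holomorphicity says precisely that $a$ maps into a single Reeb trajectory and $b$ is holomorphic into $S^1$; the charge $1$ and $\pi$-class $\beta$ conditions then pin down, up to the $\mathcal{G}(\Sigma)$-reparametrization used to define the moduli space, that $a$ traces out a closed Reeb orbit $o$ in class $\beta$ and $u$ is (equivalent to) the Reeb torus $u_o$. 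Conversely $\mathcal{P}(o) = u_o$ is $J^\lambda$-holomorphic for the stated complex structure $j(\partial_s) = c\,\partial_t$ by the computation in Section~\ref{sec:Reeb holomorphic tori simple}, and reparametrized Reeb orbits give reparametrized (hence equivalent) Reeb tori, so $\mathcal{P}$ is well-defined and bijective.

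Next I would handle general admissible $J$. Since any $(\lambda,\alpha)$-admissible $J$ agrees with $J^\lambda$ on $\mathcal{D}$ by the admissibility conditions (it preserves $\mathcal{D}$ and $\mathcal{D}^\perp$, and on the two-dimensional $\mathcal{D}$ there is only one compatible complex structure with the correct orientation, namely the one with $J(X_\alpha) = X_\lambda$), any $J$-holomorphic curve in class $A^1_\beta$ is again forced by Lemma~\ref{lemma:calibrated} to have image tangent to $\mathcal{D}$, on which $J = J^\lambda$; hence it is automatically $J^\lambda$-holomorphic. This shows $\overline{\mathcal{M}}^1_{1,1}(J, A^1_\beta, \beta) = \overline{\mathcal{M}}^1_{1,1}(J^\lambda, A^1_\beta, \beta)$ as sets, and composing with the bijection from the previous paragraph gives the desired $\mathcal{P}$. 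The parenthetical case $n > 1$ is identical: a charge-$n$, $\pi$-class $\beta$ curve in class $A^n_\beta = n\beta \otimes [S^1]$ has image in $\mathcal{D}$, so its $C$-component covers a closed Reeb orbit and the $S^1$-component has degree $n$; one checks that the resulting multiply-covered Reeb torus over $o$ is the unique such curve, giving the bijection $\mathcal{O}(R^\lambda,\beta) \to \overline{\mathcal{M}}^n_{1,1}(J, A^n_\beta, \beta)$.

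I expect the main obstacle to be the rigidity argument showing that \emph{every} curve in class $A^1_\beta$ must have domain a smooth torus with image exactly a single Reeb cylinder, i.e. ruling out all stable-map degenerations and all "diagonal" maps whose $C$-component is nonconstant in directions other than $R^\lambda$. The first point is controlled by the indecomposability of the charge and $\pi$-class data together with the observation that a constant sphere or torus bubble would lower the genus or carry no homology; the second is exactly Lemma~\ref{lemma:calibrated}, which is the crucial input here — once $\image du \subset \mathcal{D}$ is known, the splitting $\mathcal{D} = \spann(X_\alpha) \oplus \spann(X_\lambda)$ and $J$-holomorphicity reduce everything to the elementary structure of Reeb tori. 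A secondary technical point worth stating carefully is the precise role of the reparametrization group $\mathcal{G}(\Sigma)$: the $S^1$ of Reeb reparametrizations of $o$ matches the automorphisms translating the marked point on $T^2$ in the $s$-direction, and this is what makes $\mathcal{P}$ descend to a bijection of orbit spaces rather than merely of sets (and, as the footnote notes, in fact an equivalence of topological action groupoids, though we do not need that).
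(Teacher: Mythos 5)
Your reduction of the general admissible $J$ to $J^{\lambda}$ does not work as stated. You claim that admissibility forces $J|_{\mathcal{D}} = J^{\lambda}|_{\mathcal{D}}$ because ``on the two-dimensional $\mathcal{D}$ there is only one compatible complex structure with the correct orientation.'' That is false: the $\omega$-compatible complex structures on a two-dimensional symplectic vector space form a contractible but positive-dimensional family (a copy of the hyperbolic plane), so an admissible $J$ need not restrict to $J^{\lambda}$ on $\mathcal{D}$, and consequently the asserted set-equality $\overline{\mathcal{M}}^{1}_{1,1}(J,A^{1}_{\beta},\beta)=\overline{\mathcal{M}}^{1}_{1,1}(J^{\lambda},A^{1}_{\beta},\beta)$ fails: an element of the moduli space records the complex structure on the domain, and for a general admissible $J$ the induced structure on the image torus is no longer the ``rectangular'' one (the paper points this out right after the proposition). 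The paper avoids this entirely: it defines $\mathcal{P}(o)$ for an arbitrary admissible $J$ as the unique (up to isomorphism) charge $1$, $J$-holomorphic map whose image is the image torus of $u_{o}$ and whose covering degree onto it equals the multiplicity of $o$, and then proves surjectivity by the Reeb $2$-curve analysis (Lemma \ref{lemma:Reeb}, Sard, tangency to $\ker d\lambda$, then classification of holomorphic self-covers of a torus and covering-space theory), all of which uses only admissibility, not $J=J^{\lambda}$. Your opening promise of an ``isotopy argument using contractibility'' is also not carried out, and by itself it could not produce a set-level bijection without additional Fredholm or cobordism input.

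There is a second, smaller gap in your exclusion of nodal domains and sphere components. You argue via primitivity of the $[S^{1}]$-factor and indecomposability of the charge, but in this lcs setting non-constant null-homologous $J$-curves exist (this is exactly why charge is introduced), so a homological splitting argument cannot rule out non-constant bubble components carrying zero homology, nor does it address a cycle-of-spheres genus-one domain. The paper's route is different and is the one that actually closes this: by Lemma \ref{lemma:Reeb} every non-constant $J$-curve is a Reeb $2$-curve, hence has $[u^{*}\alpha]\neq 0$ in $H^{1}$ of its domain, which immediately kills spherical components; and by Lemma \ref{lemma:ReebCurveRational} (using the scale-integral first kind structure, the classifying map and Stokes) the normalization of a Reeb $2$-curve is again a Reeb $2$-curve, which forces the domain to be normal. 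You would need to import these two lemmas, not just Lemma \ref{lemma:calibrated}, to make the first paragraph of your argument rigorous; the multiplicity bookkeeping (the degree of $u$ onto its image versus the multiplicity of the orbit string) also deserves the explicit treatment the paper gives it, since it is what makes $\mathcal{P}$ a bijection on orbit strings rather than only on simple orbits.
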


In the particular case of $J ^{\lambda} $, we see that all elliptic curves in $C \times S ^{1} $ are Reeb tori, and hence the underlying complex structure on the
domain is ``rectangular''. That is, they are quotients of the 
complex plane by a rectangular lattice. This stops being the
case when we consider generalized Reeb tori in Section
\ref{sec:Mapping tori and Reeb 2-curves} for the mapping
torus of some strict contactomorphism. Moreover, for more general
compatible complex structures we might have nodal degenerations. 

\begin{proof}[Proof of Proposition \ref{prop:abstractmomentmap}] 
We define $\mathcal{P} (o)$ to be the class represented by
the unique up to isomorphism $J$-holomorphic curve $u: T ^{2} \to M$ determined by the conditions:
\begin{itemize}
	\item $u$ is charge 1.
	\item The image of $u$ is the image $\mathcal{T} $ of the map $u _{o}: T ^{2} \to
	M$, $(s, t)  \to  (o  (s), t)$, i.e. the image of the Reeb
	torus of $o$.
	\item The degree of the map $u: T ^{2} \to \mathcal{T}
	$ is the multiplicity of $o$.
\end{itemize}
We need to show that $\mathcal{P} $ is bijective.
Injectivity is automatic.
Suppose we have a curve $u \in 
\overline{\mathcal{M}}_{1,1} ^1  ({J}, A, \beta), $ represented by $u: \Sigma \to 
M $. By Lemma \ref{lemma:Reeb} $u$ is a Reeb 2-curve. 
Then $u$ has 
no spherical components, as such a component corresponds to
a $J ^{\lambda }$-holomorphic map $u': \mathbb{CP} ^{1} \to
M$,  which by Lemma \ref{lemma:Reeb} is also a Reeb
2-curve, and this is impossible by second property in the definition. 

We first show that $u$ is a finite covering map onto the
image of some Reeb torus $u _{o}$.

By Lemma \ref{lemma:ReebCurveRational} 
normalization $\widetilde{u} $ is also a Reeb 
2-curve. If $u$ is not normal then $\widetilde{u}$ is a Reeb
2-curve with domain $\mathbb{CP} ^{1} $, which is impossible
by the argument above. Hence $u$ is normal.

By the charge $1$ condition $pr _{S ^{1} } 
\circ u $ is surjective, where $pr _{S ^{1} }: C 
\times S ^{1} \to S ^{1}   $ is the projection. 
By the Sard theorem we have a regular value $ t_{0} \in S ^{1}   $, so 
that $ u ^{-1} \circ pr _{S ^{1} } ^{-1}  (t  
_{0}) $ contains  an embedded circle  $S _{0} 
\subset \Sigma $. 
Now $d (pr _{S ^{1} } \circ u )$ 
is surjective onto $T _{t _{0}} S ^{1}$ along 
$T \Sigma| _{S _{0} } $. And so by first property of $u$ being
a Reeb 2-curve, $o = pr _{C} 
\circ u| _{S _{0}} $ has non-vanishing 
differential $d(o)$. Moreover, again by the first property, $o$ is tangent to $\ker 
d \lambda $.  It follows that $o$ is an unparametrized 
$\lambda $-Reeb orbit. 

Also, the image of $d (pr _{C} 
\circ u)$ is in $\ker d \lambda $ from which it 
follows that $\image d (pr _{C} \circ u)= 
\image d(o) $. By Sard's theorem and by basic differential
topology it follows that the image of $u$ is contained in the 
image of the Reeb torus $u _{o}$, which is an embedded
2-torus $\mathcal{T} $.

By $J ^{\lambda }$-holomorphicity of $u$, since
$\Sigma \simeq T ^{2}$,  and by basic
complex analysis of holomorphic maps $T ^{2} \to T ^{2}$,
$u$ is a holomorphic covering map onto $\mathcal{T} $, of degree $\deg u$.

Let $\widetilde{o} $ be $\deg u$ cover of $o$.
Then $\mathcal{P} (\widetilde{o})$ is also represented by
a degree $\deg u$, charge one holomorphic covering map $u': T
^{2} \to \mathcal{T} $. By basic covering map theory there is
a homeomorphism of covering spaces:
\begin{equation*}
\begin{tikzcd}
T ^{2} \ar[r, "f"] \ar [d, "u"] & T ^{2} \ar [dl, "u'"]  \\
\mathcal{T}  &     .
\end{tikzcd}
\end{equation*}
Then $f$ is a biholomorphism, so that $u,u'$ are equivalent.
\end{proof}

\begin{proposition} \label{prop:regular} Let $(C, \xi)$ be a 
general contact manifold. If $\lambda$ is a non-degenerate 
contact 1-form for $\xi$ then all the elements of $\overline{\mathcal{M}}_{1,1} ^1   
( J ^{\lambda} , {A}, \beta )$ are regular curves. Moreover, if 
$\lambda$ is degenerate then
for a period $c$ Reeb orbit $o$, the kernel of the 
associated real linear Cauchy-Riemann operator for the Reeb 
torus $u _{o} $ is naturally identified with the 
1-eigenspace of $\phi _{c,*} ^{\lambda}  $ - the time $c$ 
linearized return map $\xi (o (0)) \to \xi (o (0)) $
induced by the $R^{\lambda}$ Reeb flow.
\end{proposition}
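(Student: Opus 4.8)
The plan is to reduce the statement to a standard computation of the linearized Cauchy--Riemann operator for a Reeb torus, using the product structure of $(C\times S^1, J^\lambda)$. First I would set up the linearization: for a Reeb torus $u_o: T^2 \to C\times S^1$, $u_o(s,t) = (o(s),t)$, with complex structure $j$ on $T^2$ determined by $j(\partial_s) = c\,\partial_t$, the normal bundle of $u_o$ splits (as a complex vector bundle) according to the splitting $T(C\times S^1) = \mathcal{D}\oplus \mathcal{D}^\perp = \mathcal{V}_\lambda \oplus \xi$. Since $u_o$ is tangent to $\mathcal{D}$, the pullback $u_o^*\mathcal{D}$ contributes the tangent directions plus deformations within $\mathcal{D}$; because $\mathcal{D}$ is spanned by the commuting vector fields $X_\alpha = (R^\lambda,0)$ and $X_\lambda = (0,\partial_\theta)$, and the $J^\lambda$-flow on $C\times S^1$ in these directions is the linear Reeb flow times the rotation on $S^1$, the $\mathcal{D}$-part of the linearized operator is the standard $\overline\partial$ on a trivial (or degree-$0$) line bundle over $T^2$ with a connection built from the Reeb return map; its kernel and cokernel are both the $1$-eigenspace of $\phi^\lambda_{c,*}$ acting on $\mathcal{V}$-directions, but the $X_\lambda$ direction is always fixed, matching the automorphism group $\mathcal G(\Sigma)$. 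The $\xi = \mathcal{D}^\perp$-part is the interesting summand: here one gets a real-linear Cauchy--Riemann operator on the bundle $u_o^*\xi \to T^2$, which after trivializing in the $s$-direction using the Reeb flow becomes, fiberwise over $t \in S^1$, the operator $\partial_s - A(s)$ for $A$ the path of symmetric endomorphisms generating $\phi^\lambda_{c,*}$, tensored with the $\overline\partial$ in the $t$-direction on the trivial bundle over the $t$-circle.

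Next I would carry out the Fourier/spectral analysis of this $\xi$-operator. Decomposing sections of $u_o^*\xi$ in Fourier modes $e^{2\pi i m t}$ along the $S^1$-factor (permissible since $J$ and the relevant data are $S^1$-invariant), the Cauchy--Riemann equation on each mode becomes an ODE in $s$; the kernel on the $m=0$ mode is exactly the $1$-eigenspace of $\phi^\lambda_{c,*}: \xi(o(0))\to\xi(o(0))$, and the $m\neq 0$ modes contribute no kernel by a standard positivity/winding argument (the $S^1$-derivative term forces nontrivial solutions to grow, or one invokes the usual asymptotic operator spectral gap as in symplectic field theory). This gives the claimed identification of $\ker$ with the $1$-eigenspace. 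For the non-degenerate case, when $\lambda$ is non-degenerate every closed Reeb orbit $o$ of period $c$ has $\phi^\lambda_{c,*}$ without eigenvalue $1$ on $\xi$, so the $\xi$-part of the kernel vanishes, the $\mathcal{D}$-part of the kernel is exactly $2$-dimensional (the reparametrization directions), and hence every element of $\overline{\mathcal M}^1_{1,1}(J^\lambda, A, \beta)$ is a regular (unobstructed) point of the moduli space — i.e. the linearized operator is surjective modulo the automorphisms. I would phrase this cleanly by noting that the full operator \eqref{eq:fullD} has index $2$ and the analysis shows $\dim\ker = 2 = \dim\mathcal{G}(\Sigma)$, so $\coker = 0$.

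The main obstacle I anticipate is the careful bookkeeping of the $\mathcal{D}$-direction contribution and its interaction with the Deligne--Mumford/automorphism directions: one must check that the ``extra'' deformations in the $X_\alpha$ direction (which change the period $c$, i.e. move along the $S^1$-family of Reeb orbits in the Morse--Bott family) are correctly accounted as part of the linearized moduli space rather than as obstruction, and that in the non-degenerate case these are precisely the infinitesimal automorphisms of the domain together with the (zero-dimensional, after quotient) Reeb orbit. A secondary technical point is justifying the mode-by-mode decomposition rigorously — that the Cauchy--Riemann operator genuinely block-diagonalizes over the $S^1$-Fourier modes, which uses $S^1$-invariance of $J^\lambda$ and the product metric, together with elliptic regularity to pass between formal Fourier series and honest $W^{1,p}$ sections. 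Once those are in place the eigenvalue-$1$ identification is essentially the same computation as the standard index/kernel computation for Morse--Bott Reeb orbits, which I would cite from the Morse--Bott literature (e.g. Bourgeois~\cite{cite_FredericBourgeois}) rather than redo in full.
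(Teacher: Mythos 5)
Your proposal is correct in substance and reaches the statement, but it takes a genuinely different route for the central step. Like the paper, you first split $u_o^*T(C\times S^1)$ into the $\mathcal{D}$-directions plus the normal ($\xi$) directions and dispose of the domain-tangent/Teichm\"uller block by classical surjectivity (the paper cites Teichm\"uller theory together with \cite[Lemma 3.3]{cite_WendlAutomatic}), so that regularity reduces to the index-$0$ normal operator $D$ on $u_o^*\xi$. For the identification of $\ker D$ with the $1$-eigenspace of $\phi^{\lambda}_{c,*}$, however, you use $S^1$-invariance to block-diagonalize $D$ over Fourier modes in the $S^1$-factor: the $m=0$ mode gives exactly the $1$-eigenspace, and the $m\neq 0$ modes are excluded because (after complexifying) a periodic mode-$m$ solution would be an eigenvector of the self-adjoint asymptotic operator with nonzero purely imaginary eigenvalue of size $2\pi |m| c$ --- your ``solutions are forced to grow'' heuristic is the right one, though the precise fact you need is realness of the spectrum of the asymptotic operator rather than a spectral gap. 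The paper instead argues geometrically and coordinate-freely: it builds the flat symplectic connection $\mathcal{A}$ on $N=u_o^*\xi$ from the linearized Reeb flow, forms the closed $2$-form $\Omega$ on the total space of $N$ (fiberwise taming $J$, vanishing on horizontals), and uses Stokes' theorem to force any $\mu\in\ker D$ to be $\mathcal{A}$-flat, which yields the same identification without any Fourier analysis; your approach buys a more standard SFT-style computation that you can outsource to the Morse--Bott literature, at the cost of justifying the mode decomposition and the self-adjointness of the asymptotic operator (which holds here because $J$ is $d\lambda$-compatible on $\xi$). The final regularity count in the non-degenerate case (full index $2$, kernel of dimension $2$ coming from the automorphisms, hence vanishing cokernel) agrees with the paper. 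Two loose statements in your sketch should be repaired when writing it up: the $\mathcal{D}$-part kernel and cokernel are not ``the $1$-eigenspace of $\phi^{\lambda}_{c,*}$ acting on $\mathcal{V}$-directions'' (the linearized flow is the identity on $\mathcal{V}$; the kernel there is the $2$-dimensional space of constants, and the cokernel is killed by the $T_jM_{1,1}$ directions), and infinitesimal deformations in the $X_\alpha$ direction do not change the period $c$ --- changes of period correspond to varying the modulus of the domain torus, while Morse--Bott families of orbits show up precisely in the $m=0$ kernel of the $\xi$-block.
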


\begin{proof}  
We already know by Proposition \ref{prop:abstractmomentmap} that all  
$u \in \overline{\mathcal{M}}_{1,1} ^1   (J ^{\lambda} 
   , {A}, \beta )$ are equivalent to Reeb tori. In 
   particular, such curves have a representation by a $J ^{\lambda} $-holomorphic map $$u: (T ^{2},j) \to (Y = C \times S ^{1}, J ^{\lambda}).  $$
Since each $u$ is immersed we may naturally get a splitting $u ^{*}T (Y) \simeq N \times T (T ^{2})   $,
using the $g _{J} $ metric, where $N \to T ^{2}  $ denotes the pull-back, of the  $g _{J} $-normal bundle to $\image u$, and which is identified with the pullback of the distribution $\xi _{\lambda} $ on $Y$, (which we also call the co-vanishing distribution).

The full associated real linear Cauchy-Riemann operator takes the
form:
\begin{equation} \label{eq:fullD}
   D ^{J}_{u}: \Omega ^{0} (N  \oplus T (T ^{2})  ) \oplus T _{j} M  _{1,1}   \to \Omega ^{0,1}
   (T(T ^{2}), N \oplus T (T ^{2}) ). 
\end{equation}
This is an index 2 Fredholm operator (after standard Sobolev
completions), whose restriction to $\Omega
^{0} (N \oplus T (T ^{2})  )$ preserves the splitting, that is the
restricted operator splits as 
\begin{equation*}
D \oplus D':   \Omega ^{0} (N) \oplus \Omega ^{0} (T (T ^{2})  )    \to \Omega ^{0,1}
(T (T ^{2}), N ) \oplus \Omega ^{0,1}(T (T ^{2}), T (T ^{2}) ).
\end{equation*}
On the other hand the restricted Fredholm index 2 operator 
\begin{equation*}
\Omega ^{0} (T (T ^{2})) \oplus T _{j} M  _{1,1}  \to \Omega ^{0,1}(T (T ^{2}) ),
\end{equation*}
is surjective by classical Teichmuller theory, see also
\cite [Lemma 3.3]{cite_WendlAutomatic} for a precise argument in this setting.
It follows that $D ^{J}_{u}  $ will be surjective
if  
the restricted Fredholm index 0 operator
\begin{equation*}
D: \Omega ^{0} (N) \to \Omega ^{0,1}
(N),
\end{equation*}
has no kernel.

The bundle $N$ is symplectic with symplectic form on
the fibers given by restriction of $u ^{*} d \lambda$, and together with $J
^{\lambda} $ this gives a Hermitian structure $(g _{\lambda}, j _{\lambda} )$ on $N $. We have a
linear symplectic connection $\mathcal{A}$ on $N$, which over the slices $S ^{1}
\times \{t\} \subset T ^{2} $ is induced by the  pullback
by $u$ of the linearized $R  ^{\lambda} $ Reeb flow. Specifically the $\mathcal{A}$-transport map from the fiber $N  _{(s _{0} , t)}  $ to the fiber $N  _{(s _{1}, t)}  $ over the path $ [s _{0}, s _{1} ]
   \times \{t\} \subset T ^{2} $,  is given by $$(u_*| _{N  _{(s _{1}, t)}  }) ^{-1}  \circ (\phi ^{\lambda}
   _{c(s _{1}  - s _{0})})_* 
\circ u_*| _{N _{(s _{0} , t  )}  }, $$ 
where $\phi ^{\lambda} 
   _{c(s _{1}  - s _{0})} $ is the time $c \cdot (s _{1}  - s _{0} )$ map for the $R ^{\lambda} $ Reeb flow, where $c$ is the period of the Reeb orbit $o _{u} $,
   and where $u _{*}: N \to TY $ denotes the natural map, (it is the universal map in the pull-back diagram.)

The connection $\mathcal{A}$ is defined to be trivial in the $\theta
_{2} $ direction, where trivial means that the parallel transport  maps are
the
$id$ maps over $\theta _{2} $ rays.  In particular the curvature $R _{\mathcal{A}} $, understood as a lie algebra valued 2-form, of this connection
vanishes. The connection $\mathcal{A}$ determines a real linear CR operator $D _{\mathcal{A}} $ on
$N$ in the standard way, take the complex anti-linear part of
the vertical differential of a section. Explicitly, 
$$D _{\mathcal{A}}: \Omega ^{0} (N) \to \Omega ^{0,1}
(N), $$
   is defined by $$D _{\mathcal{A}} (\mu) (p) = j _{\lambda} \circ \pi ^{vert} (\mu (p)) \circ d\mu (p) - \pi ^{vert} (\mu (p)) \circ d\mu (p) \circ j, $$ where $$\pi ^{vert} (\mu (p)): T _{\mu (p)} N \to T ^{vert} _{\mu(p)}  N \simeq N $$ is the $\mathcal{A}$-projection, and where $T ^{vert} _{\mu (p)}  N$ is the kernel of the projection $T _{\mu (p)} N \to T _{p}  \Sigma$.
It is elementary to verify that 
the operator $D _{\mathcal{A} }$ is Fredholm $0$ with
the kernel isomorphic to the kernel of $D$. See also \cite [Section
10.1]{cite_SavelyevOh} for a computation of this kind in much greater generality.

We have a differential 2-form $\Omega$ on the total space of $N$ 
defined as follows. On the fibers $T ^{vert} N$, 
$\Omega= u _{*}  \omega $, for $\omega= d_{\alpha} \lambda $, and for $T ^{vert} N \subset TN$ denoting the vertical tangent space, or subspace of vectors $v$ with $\pi _{*} v =0 $, for $\pi: N \to T ^{2} $ the projection. While on the $\mathcal{A}$-horizontal distribution 
$\Omega$ is defined to vanish.
The 2-form $\Omega$ is closed, which we may check explicitly by using that $R _{\mathcal{A}} $ vanishes
to obtain local symplectic trivializations of $N$ in which $\mathcal{A}$ is trivial.
Clearly $\Omega$ must vanish on the
0-section since it is a $\mathcal{A}$-flat section. But any section is homotopic to
the 0-section and so in particular if $\mu \in \ker D$ then $\Omega$
vanishes on $\mu$.  

Since $\mu \in \ker D$, and so its
vertical differential is complex linear, it 
follows that the vertical differential  vanishes. 
To see this note that $\Omega (v, J ^{\lambda}v )
>0$, for $0 \neq v \in T ^{vert}N$ and so if the vertical differential did not vanish we would
have $\int _{\mu} \Omega>0 $. So $\mu$ is
$\mathcal{A}$-flat, in particular the
restriction of $\mu$ over all slices $S ^{1} \times \{t\} $ is
identified with a period $c$ orbit of the linearized at $o$
$R ^{\lambda} $ Reeb flow, and which
does not depend on $t$ as $\mathcal{A}$ is trivial in the $t$ variable. So the kernel of $D$ is identified with the vector
space of period $c$ orbits of the linearized at $o$ $R
^{\lambda} $ Reeb flow, as needed. 
\end{proof}

\begin{proposition} \label{prop:regular2} Let 
$\lambda$ be a contact form on a  $(2n+1)$-fold 
$C$, and $o$ a non-degenerate, period $c$, 
$\lambda$-Reeb orbit, then the orientation of $[u 
_{o} ]$ induced by the determinant line bundle 
orientation of $\overline{\mathcal{M}} ^1 
_{1,1}  ( J ^{\lambda} , {A} ),$ is $(-1) ^{CZ (o) 
-n} $, which is $$\sign \Det (\Id| _{\xi (o(0))}  - \phi _{c, *}
^{\lambda}| _{\xi (o(0))}   ).$$ 
\end{proposition}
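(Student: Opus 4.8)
The plan is to compute the sign of the determinant line of the full Cauchy--Riemann operator $D^J_u$ for the Reeb torus $u_o$ in terms of spectral data of the linearized return map, and then to identify that spectral data with the Conley--Zehnder index. First I would use the splitting of $D^J_u$ established in Proposition~\ref{prop:regular}: since $o$ is non-degenerate, $D^J_u$ is surjective, and the orientation of $[u_o]$ in $\overline{\mathcal{M}}^1_{1,1}(J^\lambda, A)$ is the sign of the determinant of the normal operator $D:\Omega^0(N)\to\Omega^{0,1}(N)$ — because the Teichm\"uller part contributes a canonically oriented (complex) factor and the $T(T^2)$-automorphism part is likewise canonically oriented, so only the $N$-factor can change the sign. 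Thus the whole computation reduces to understanding $\Det(D)$ as a real Fredholm-index-$0$ operator.

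Next I would replace $D$ by the connection operator $D_{\mathcal{A}}$ (they have isomorphic, here trivial, kernels and cokernels, and one checks the identification of determinant lines is orientation-preserving, as in \cite[Section 10.1]{cite_SavelyevOh}). The key point is that $D_{\mathcal{A}}$ on the bundle $N\to T^2$, which is trivial in the $t$-direction and has monodromy the linearized Reeb return map $\phi^\lambda_{c,*}$ in the $s$-direction, is precisely the standard asymptotic/model operator whose determinant sign is governed by the Conley--Zehnder index of the path of symplectic matrices $s\mapsto \phi^\lambda_{cs,*}$. Here I would invoke the standard linear-algebra/spectral-flow computation (as in the treatment of asymptotic operators in SFT, e.g.\ Wendl's lecture notes or \cite{cite_WendlAutomatic}): for a Hermitian line-sum bundle over $T^2$ with a CR operator built from a path of symplectic matrices from $\Id$ to $\phi$, the sign of the determinant line equals $(-1)^{CZ(o)-n}$, where the shift by $n = \tfrac12\dim\xi$ comes from comparing the index normalization of $D_{\mathcal A}$ (a genuine index-$0$ operator over a closed surface) with the Conley--Zehnder normalization of a path.

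Finally, I would record the elementary identity
\begin{equation*}
(-1)^{CZ(o)-n} = \sign\Det\bigl(\Id|_{\xi(o(0))} - \phi^\lambda_{c,*}|_{\xi(o(0))}\bigr),
\end{equation*}
which is a standard fact about non-degenerate symplectic paths: the parity of $CZ(o)-n$ detects the sign of $\det(\Id-\phi)$ on the contact hyperplane (equivalently, the parity of the number of real eigenvalues of $\phi$ in $(1,\infty)$ together with the contribution of eigenvalues in $(-\infty,-1)$ — this is exactly the Lefschetz-sign formula for the linearized return map).

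\textbf{Main obstacle.} The delicate step is the second one: pinning down the exact sign and the precise index shift relating the determinant-line orientation of $D_{\mathcal{A}}$ over the closed torus $T^2$ to the Conley--Zehnder index of the path $s\mapsto \phi^\lambda_{cs,*}$. This requires being careful about orientation conventions on determinant lines, the choice of trivialization of $\xi$ along $o$ (and checking $CZ$ is independent of it modulo the stated formula), and the difference between the ``closed surface'' normalization and the ``path'' normalization — it is exactly the kind of point where an off-by-$(-1)^n$ error can creep in. I would handle it by reducing to the case $\dim C = 3$ (a single Hermitian line bundle, monodromy a $2\times2$ symplectic matrix) via the direct-sum/stabilization property of both sides under splitting $\xi$ into $\phi_{c,*}$-invariant symplectic subspaces, and in that base case compute $\Det(D_{\mathcal{A}})$ by hand from the eigenvalues of the $2\times 2$ monodromy, matching against the three standard non-degenerate normal forms (elliptic, positive hyperbolic, negative hyperbolic).
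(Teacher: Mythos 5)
Your outline reproduces the paper's reduction: the orientation of $[u_o]$ is computed from the normal operator, which is replaced by the connection operator $D_{\mathcal{A}}$ on $N$ whose kernel (by the proof of Proposition \ref{prop:regular}) is the $1$-eigenspace of the endpoint of the monodromy path $s \mapsto \phi^{\lambda}_{cs,*}$; and the closing identity $(-1)^{CZ(o)-n} = \sign \Det (\Id - \phi^{\lambda}_{c,*})$ is indeed the standard determinant property of the Conley--Zehnder index, consistent with \eqref{eq:conleyzenhnder}. The difference is entirely in the step you yourself flag as the crux, and there your proposal is not yet a proof. Citing the SFT literature does not close it: the references you name treat asymptotic operators at punctures, not the determinant-line sign of an index-zero CR operator over a closed torus with prescribed monodromy, so the precise statement still has to be established. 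The paper does this directly, in all dimensions, by a crossing-count homotopy: extend the monodromy path $p$ by a path $p''$ with only simple Maslov-cycle crossings to a unitary endpoint, then further homotope to a unitary geodesic; by Proposition \ref{prop:regular} the induced operators are invertible except at the simple crossings, where the kernel is one-dimensional, so the sign is $(-1)^{\#\text{crossings}}$, and the parity bookkeeping ($CZ(p'') \equiv CZ(p) - n \bmod 2$, via additivity and the evenness of eigenvalue counts at regular crossings of a unitary geodesic) gives $(-1)^{CZ(o)-n}$.

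Your fallback for that step has a concrete gap: a non-degenerate symplectic map $\phi^{\lambda}_{c,*}$ need not admit a splitting of $\xi(o(0))$ into $2$-dimensional invariant symplectic subspaces --- eigenvalue quadruples $\mu, \bar{\mu}, \mu^{-1}, \bar{\mu}^{-1}$ off both the unit circle and the real axis produce irreducible $4$-dimensional invariant blocks, and Jordan structure causes further trouble --- so "reduce to $\dim C = 3$ by splitting $\xi$" does not work as stated. The repair is to deform the endpoint inside the set $\Symp^{*}(\mathbb{R}^{2n})$ of symplectic maps without eigenvalue $1$ (which has exactly two components, each containing a block-diagonal representative), observing that along such a deformation the operator stays invertible (its kernel is the $1$-eigenspace of the endpoint), while $CZ \bmod 2$ and $\sign\Det(\Id-\phi)$ are also unchanged; only then does your case check of the three $2\times 2$ normal forms suffice. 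But once you argue invertibility away from $1$-eigenvalue crossings and track how the sign jumps at simple crossings, you are re-deriving exactly the spectral-flow argument the paper carries out in one pass, so the reduction buys little. In short: right skeleton, correct final identity, but the central sign computation is either outsourced to results that do not literally contain it or rests on a decomposition that fails in general and must be replaced by the deformation/crossing argument of the paper.
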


\begin{proof}[Proof of Proposition \ref{prop:regular2}]
Abbreviate $u _{o} $ by $u$. Let $N \to T ^{2} $ be 
the vector bundle associated to $u$ as in the proof of Proposition \ref{prop:regular}.
Fix a trivialization $\phi$ of $N$ induced by any trivialization of the
contact distribution $\xi$ along $o$ in the obvious sense: $N$
is the pullback of $\xi$ along the composition $$T ^{2} \to S ^{1}
\xrightarrow{o} C.  $$
Let the symplectic connection $\mathcal{A}$ on $N$ be defined as before. Then the pullback connection $\mathcal{A}' := \phi ^{*} \mathcal{A} $ on $T ^{2} \times \mathbb{R} ^{2n}  $ is a connection whose parallel transport 
paths $p _{t}: [0,1] \to \Symp (\mathbb{R} ^{2n} )$, along the closed loops $S ^{1} \times \{t\} $,
are paths starting at $\id$, and are $t$ independent. And so the parallel transport path of $\mathcal{A}'$ along $\{s\} \times S ^{1} $ is constant, that is $\mathcal{A}' 
$ is trivial in the $t$ variable. We shall call such a
connection $\mathcal{A}'$ on $T ^{2} \times
\mathbb{R} ^{2n}  $ \emph{induced by $p$}.  

   By non-degeneracy assumption on $o$, the map $p(1) $
has no 1-eigenvalues. Let $p'': [0,1] \to \Symp (\mathbb{R} ^{2n} )$ be a path from $p (1)$ to a unitary
map $p'' (1)$, with $p'' (1) $ having no $1$-eigenvalues, and s.t. $p''$
has only simple crossings with the Maslov cycle. Let $p'$ be the concatenation of $p$ and $p''$. We then get  $$CZ (p') - \frac{1}{2}\sign \Gamma (p', 0) \equiv
CZ (p') - n \equiv 0 \mod {2}, $$ since
$p'$ is homotopic relative end points to a unitary geodesic path $h$ starting at
$id$, having regular crossings, and since the number of
negative, positive eigenvalues is even at each regular
crossing of $h$ by unitarity.  Here $\sign \Gamma (p', 0)$
is the index of the crossing form of the path $p'$ at time
$0$, in the notation of
\cite{cite_RobbinSalamonTheMaslovindexforpaths.}.
Consequently,
  \begin{equation} \label{eq:mod2}
  CZ (p'') \equiv CZ (p) -n \mod {2},
  \end{equation} 
    by additivity of
the Conley-Zehnder index. 
   
   Let us then define a free homotopy $\{p _{t} \}$ of $p$ to
$p'$, $p _{t} $ is the concatenation of $p$ with $p''| _{[0,t]} $,
reparametrized to have domain $[0,1]$ at each moment $t$. This
determines a homotopy $\{\mathcal{A}' _{t} \}$ of connections induced by $\{p
_{t} \}$. By the proof of Proposition \ref{prop:regular}, the CR operator $D _{t} $ determined by each  $\mathcal{A}' _{t} $ is surjective except at some finite collection of times $t _{i} \in (0,1) $, $i \in N$ determined by the crossing times of $p''$ with the Maslov cycle, and the dimension of the kernel
of $D _{t _{i} } $ is the 1-eigenspace of $p'' (t _{i} )$, which is 1
by the assumption that the crossings of $p''$ are simple. 
   
 The operator
$D _{1} $ is not complex linear. To fix this we concatenate the homotopy $\{D _{t} \}$ with the homotopy $\{\widetilde{D} _{t}  \}$ defined as follows. Let $\{\widetilde{\mathcal{A}} _{t}  \}$ be a homotopy of $\mathcal{A}' _{1} $ to a
unitary connection $\widetilde{\mathcal{A}} _{1}  $, where the homotopy
$\{\widetilde{\mathcal{A}} _{t}  \}$ is through connections induced by paths
$\{\widetilde{p} _{t} \} $, giving a path homotopy of $p'= \widetilde{p} _{0}  $ to $h$. 
Then $\{\widetilde{D} _{t}  \}$ is defined to be induced by $\{\widetilde{\mathcal{A}} _{t}  \}$.

Let us denote by $\{D' _{t} \}$ the
concatenation of $\{D _{t} \}$ with $\{ \widetilde{D} _{t}  
\}$. By construction, in the second half of the homotopy $\{ {D}' _{t}
\}$, ${D}' _{t}  $ is surjective. And $D' _{1} $ is induced by a unitary connection, since it is induced by unitary path $\widetilde{p}_{1}  $. Consequently, $D' _{1} $ is complex linear. By the above construction,
for the homotopy $\{D' _{t} \}$, $D' _{t} $ is surjective except for
$N$ times in $(0,1)$, where the kernel has dimension one. 
In
particular the sign of $[u]$ by the definition via the determinant
line bundle is exactly $$-1^{N}= -1^{CZ (p) -n},$$
by \eqref{eq:mod2}, which was what to be proved.
\end {proof}

\begin{theorem} \label{thm:GWFullerMain} 
\begin{equation*} 
GW _{1,1} ^{1} (N,J ^{\lambda}, A _{\beta}, \beta ) ([\overline {M} _{1, 1}] 
\otimes [C \times S ^{1} ]) = i (\mathcal{P} ^{-1}({N}), R ^{\lambda}, \beta), 
\end{equation*} where $N \subset \overline{\mathcal{M}} ^1 _{1,1} ( {J} ^{\lambda},
A _{\beta}, \beta  )$ is an open compact set (where $\mathcal{P}$ is as in Proposition \ref{prop:abstractmomentmap}), $i (\mathcal{P} ^{-1}({N}), R ^{\lambda}, \beta)$ is the Fuller index as described in the Appendix below, and where the left-hand side of the equation is the functional as in \eqref{eq:functionals2}.
\end{theorem}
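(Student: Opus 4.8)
The plan is to reduce to the case where $\lambda$ is non-degenerate, where both sides become finite signed rational sums indexed by the Reeb orbits in $\mathcal{P}^{-1}(N)$, and then to check that the two sums agree term by term using Propositions \ref{prop:abstractmomentmap}, \ref{prop:regular} and \ref{prop:regular2}.

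First I would argue that both sides are unchanged under a sufficiently $C^{\infty}$-small perturbation $\lambda \rightsquigarrow \lambda'$ of the contact form. On the left: $(\lambda',\alpha)$ is again the data of an lcs-fication, $J^{\lambda'}$ is $C^{\infty}$-close to $J^{\lambda}$, and by Lemmas \ref{lemma:neighborhood}, \ref{lemma:NearbyEnergy} and \ref{lemma:NearbyEnergyDeformation} together with Lemma \ref{prop:invariance1} there is an open compact cobordism $\widetilde{N}\subset\overline{\mathcal{M}}^{1}_{1,1}(\{J_{t}\},A_{\beta},\beta)$ restricting to $N$ over $t=0$ and to some open compact $N_{1}$ over $t=1$, so that $GW^{1}_{1,1}(N,J^{\lambda},A_{\beta},\beta)=GW^{1}_{1,1}(N_{1},J^{\lambda'},A_{\beta},\beta)$. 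On the right: $R^{\lambda'}$ is an admissible Reeb perturbation of $R^{\lambda}$ in the sense used to define the Fuller index (Appendix \ref{appendix:Fuller}), so $i(\mathcal{P}^{-1}(N),R^{\lambda},\beta)=i((\mathcal{P}')^{-1}(N_{1}),R^{\lambda'},\beta)$, where $\mathcal{P}'$ is the bijection of Proposition \ref{prop:abstractmomentmap} for $\lambda'$ and $(\mathcal{P}')^{-1}(N_{1})$ corresponds to $\mathcal{P}^{-1}(N)$ under the resulting correspondence of orbits. Choosing $\lambda'$ to be a $C^{\infty}$-generic small perturbation, it therefore suffices to treat the case where $\lambda$ is non-degenerate.

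Assume now $\lambda$ is non-degenerate. By Proposition \ref{prop:abstractmomentmap}, which is an equivalence of topological action groupoids, $N$ is a finite set $\{u_{o}\,:\,[o]\in\mathcal{P}^{-1}(N)\}$, with each $u_{o}$ carrying automorphism group $\Z/m(o)$, where $m(o)$ is the multiplicity of $o$; the finiteness uses that $N$ is compact and open and that non-degenerate Reeb orbits are isolated in $\mathcal{O}(R^{\lambda},\beta)$. By Proposition \ref{prop:regular} every $u_{o}$ is a transversally cut out point of the moduli space, which has virtual dimension $0$ (Section \ref{sec:gromov_witten_theory_of_the_lcs_c_times_s_1_}); since $[\overline{M}_{1,1}]$ and $[C\times S^{1}]$ are fundamental classes, their Poincar\'e duals are $1$, so the left-hand side is just the degree of the $0$-dimensional virtual fundamental class of $N$, namely $\sum_{[o]}\epsilon(u_{o})/|\Aut(u_{o})|$ with $\epsilon(u_{o})\in\{\pm1\}$ the orientation sign from the determinant line bundle. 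By Proposition \ref{prop:regular2}, $\epsilon(u_{o})=\sign\Det(\Id|_{\xi(o(0))}-\phi^{\lambda}_{c,*}|_{\xi(o(0))})$, with $c$ the period of $o$. On the other hand, by the definition of the Fuller index of a non-degenerate orbit (Appendix \ref{appendix:Fuller}),
\begin{equation*}
i(\mathcal{P}^{-1}(N),R^{\lambda},\beta)=\sum_{[o]\in\mathcal{P}^{-1}(N)}\frac{1}{m(o)}\,\sign\Det\!\left(\Id|_{\xi(o(0))}-\phi^{\lambda}_{c,*}|_{\xi(o(0))}\right),
\end{equation*}
and since $|\Aut(u_{o})|=m(o)$ the two sums coincide term by term, which is the assertion.

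The substantive obstacle is the reduction step: one must produce a single small perturbation that is simultaneously an admissible Reeb perturbation (so that it computes the Fuller index) and a $C^{\infty}$-small deformation of $(\lambda,\alpha,J^{\lambda})$ for which the cobordism moduli space near $N$ stays compact (so that it computes $GW$), and one must check that $\mathcal{P}$ transports the identification across the deformation — this is precisely what Lemmas \ref{lemma:neighborhood}--\ref{lemma:NearbyEnergyDeformation} and the groupoid form of Proposition \ref{prop:abstractmomentmap} are for. A secondary, purely bookkeeping obstacle is to confirm that the orbifold weight $1/|\Aut(u_{o})|$ equals the multiplicity weight $1/m(o)$ of the Fuller index and that the determinant-line orientation of Proposition \ref{prop:regular2} matches the fixed-point index of the linearized return map, so that the two finite sums are equal term by term and not merely in total.
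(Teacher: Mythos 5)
Your proposal is correct and follows essentially the same route as the paper's proof: reduce to a non-degenerate $\lambda$ using the compact cobordism of Lemma \ref{lemma:NearbyEnergyDeformation} together with Lemma \ref{prop:invariance1} on the Gromov--Witten side and the invariance of the Fuller index on the other, then match the two finite weighted sums term by term via Propositions \ref{prop:abstractmomentmap}, \ref{prop:regular} and \ref{prop:regular2}, with the isotropy order $|\Aut(u_{o})|=m(o)$ accounting for the weights. The only difference is ordering (the paper treats the non-degenerate case first and then perturbs), which is immaterial.
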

\begin{proof} 
Suppose that ${N} \subset \overline{\mathcal{M}} ^1   _{1,1}
(J ^{\lambda}, A _{\beta}, \beta ) $ is open-compact and consists
of isolated regular Reeb tori $\{u _{i} \}$, corresponding
to orbits $\{o _{i} \}$. Denote by $mult (o _{i}) $ the
multiplicity of the orbits as in Appendix \ref{appendix:Fuller}. Then  we have:
\begin{equation*}
   GW _{1,1} ^{1} (N,   J ^{\lambda}, A _{\beta}, \beta ) ([\overline{M} _{1,1}   ] \otimes [C \times S ^{1} ]) = \sum _{i} \frac{(-1) ^{CZ (o _{i} ) - n} }{mult (o _{i} )},
\end{equation*}
where $n$ half the dimension of $M$, the numerator is as in
\eqref{eq:conleyzenhnder}, and $mult (o _{i} )$ is the order of the corresponding isotropy group,  see Appendix \ref{sec:GromovWittenprelims}.

The expression on the right is exactly the Fuller index $i 
(\mathcal{P}^{-1} (N), R ^{\lambda},  \beta)$.
Thus, the theorem follows for $N$ as above. 
However, in general if $N$ is open and compact then perturbing slightly we obtain a smooth family $\{R ^{\lambda _{t} } \}$, $\lambda _{0} =\lambda $, s.t.
 $\lambda _{1} $ is non-degenerate, that is has non-degenerate orbits. 
And such that there is an open-compact subset $\widetilde{N} $ of 
$\overline{\mathcal{M}}  _{1,1} ^1  (\{J ^{\lambda _{t} } \}, A 
_{\beta}, \beta)$ with $(\widetilde{N} \cap \overline{\mathcal{M}}  _{1,1} 
^1  (J ^{\lambda}, A _{\beta}, \beta ) = N $, see Lemma \ref{lemma:NearbyEnergyDeformation}.
 Then by Lemma \ref{prop:invariance1} if $$N_1=(\widetilde{N} \cap 
 \overline{\mathcal{M}} ^1   _{1,1} (J ^{\lambda _{1} },
 A _{\beta}, \beta ))
$$ we get $$GW ^{1} _{1,1} (N,   J ^{\lambda}, A _{\beta}, \beta
) ([\overline{M}  _{1,1} ] \otimes [C \times S ^{1} ]) = GW
^{1} _{1,1} (N _{1},   J ^{\lambda _{1} }, A _{\beta}, \beta  ) ([\overline{M} _{1,1}  ] \otimes [C \times S ^{1} ]).
$$
By the previous discussion 
\begin{equation*}
   GW ^{1} _{1,1} (N _{1},   J ^{\lambda _{1} }, A _{\beta}, \beta ) ([\overline{M} _{1,1}   ] \otimes [C \times S ^{1} ]) = i (N_1, R ^{\lambda_1},  \beta), 
\end{equation*}
but by the invariance of Fuller index (see Appendix \ref{appendix:Fuller}), $$i (N_1, R ^{\lambda_1},  \beta) = i (N, R ^{\lambda},  \beta).
$$ 
\end{proof}
What about higher genus invariants of $C \times S ^{1} $? Following the proof of Proposition \ref{prop:abstractmomentmap}, it is not hard to see that all $J ^{\lambda} $-holomorphic curves must be branched covers of Reeb tori. If one can show that these branched covers are 
regular when the underlying tori are regular, the
calculation of invariants would be fairly  automatic from
this data. See \cite{cite_WendlSuperRigid}, \cite{cite_WendlChris} where these kinds of regularity calculation are made.



\section{Proofs of main theorems} 
\label{sec:Proofs of theorems on Conformal symplectic Weinstein conjecture}

To set notation and terminology we review the basic definition 
of a nodal curve.
\begin{definition}\label{def:normalization} A 
\textbf{\emph{nodal Riemann surface}}  (without boundary) is a pair 
$\Sigma= (\widetilde{\Sigma }, \mathcal{N} ) $ 
where $\widetilde{\Sigma } $  is a Riemann 
surface, and $\mathcal{N}$ a set of pairs of 
points of $\widetilde{\Sigma }: $ $\mathcal{N} =\{(z _{0} ^{0}, z _{0} ^{1}),  \ldots, (z _{n} ^{0}, z _{n} 
^{1}) \}$, $n _{i} ^{j} \neq n 
_{k} ^{l}$ for $i \neq k$ and all $j,l$. By slight 
abuse, we may also denote by $\Sigma $ the quotient space
$ \widetilde{\Sigma}/\sim $, where the  
equivalence relation is generated by $n _{i} ^{0} 
\sim n _{i} ^{1}$.  
Let $q _{\Sigma}: \widetilde{\Sigma} \to 
(\widetilde{\Sigma}/\sim)$ denote the quotient map.  
The elements $q _{\Sigma } (\{z _{i} 
^{0}, z _{i} ^{1} \}) \in \widetilde{\Sigma}/\sim $,  are called \textbf{\emph{nodes}}.
Let $M$ be a smooth manifold. By a map $u: \Sigma \to M$ of
a nodal Riemann surface $\Sigma$, we mean a set map $u: 
(\widetilde{\Sigma}/\sim) \to M$.  $u$  is called 
smooth or immersion or $J$-holomorphic (when $M$ 
is almost complex) if the map $\widetilde{u}= u \circ q _{\Sigma }$ 
is smooth or respectively immersion or respectively $J$-holomorphic. We call $\widetilde{u} $ 
\textbf{\emph{normalization of $u$}}.  $u$ is called an embedding if $u$ is a topological embedding and its normalization is an immersion.
The cohomology groups of $\Sigma $ are defined as 
$H ^{\bullet} (\Sigma) := H ^{\bullet } (\widetilde{\Sigma}/ \sim)$, likewise with homology. The genus of 
$\Sigma $ is the topological genus of 
$\widetilde{\Sigma}/\sim $.
\end{definition}
We shall say that $(\widetilde{\Sigma }, 
\mathcal{N} ) $ is normal if $\mathcal{N} = 
\emptyset$. Similarly, $u: \Sigma \to 
M$, $\Sigma = (\widetilde{\Sigma }, \mathcal{N} ) 
$  is called \textbf{\emph{normal}} if $\mathcal{N} = \emptyset 
$. The normalization of $u$ is the map of the 
nodal Riemann surface $\widetilde{u}: 
\widetilde{\Sigma} \to M$,  $\widetilde{\Sigma} = 
(\widetilde{\Sigma}, \emptyset )  $. Note that if 
$u$ is a Reeb 2-curve, its normalization $\widetilde{u} $  may not be a Reeb 2-curve (the second condition may fail).

%
\subsection{Mapping tori and Reeb 2-curves} \label{sec:Mapping tori and Reeb 2-curves}
Let $(C, \lambda ) $ be a contact manifold and $\phi$
a strict contactomorphism and let $M = (M _{\phi, 1}, \lambda _{\phi}, 
\alpha )$ denote the 
mapping torus of $\phi$, as also appearing in Theorem
\ref{thm:firstkindtorus}.   More specifically, $M = C \times 
\mathbb{R} ^{} / \sim $, where the equivalence $\sim$ is generated by 
$(x, \theta ) \sim (\phi (x), \theta +1)  $, for more details on the
corresponding lcs structure see for instance
~\cite{cite_BazzoniFirstKind}.
Then $(M, \lambda _{\phi}, \alpha)$ is an integral first kind lcs manifold.

In this case  $\mathcal{V} _{\lambda} = \mathcal{D}$, and in
mapping torus coordinates at a point $(x, \theta ) $,  it
is spanned by $X _{\lambda} = (0, \frac{\partial }{\partial
\theta }), X _{\alpha} = (R ^{\lambda _{\theta }}, 0) $ for $R 
^{\lambda _{\theta}}$ the $\lambda _{\theta}$-Reeb vector field, where
$\lambda _{\theta} = \lambda _{C _{\theta}}$ the fiber over
$\theta$ of the
projection $M \to S ^{1}$. Analogously to the Example
\ref{section:lcsfication} there is an $S ^{1}$-invariant
almost complex structure on $M$, which we call $J ^{\lambda
_{\phi }}$.

We now show that all Reeb 2-curves in $M$ must be of a 
certain type. Let $o: S ^{1} \to C$ be a $\lambda$-Reeb
and suppose that $\image 
\phi ^{n} (o) = \image o$,  for some $n>0$, so that
$$\forall t \in [0,1]: \phi ^{n} (o) (t) = o (t+ \theta
_{0})$$ for some
uniquely determined $\theta _{0} \in [0,1)$.
Let $\widetilde{o}: S ^{1} 
\times [0,n] \to C \times \mathbb{R}  $ be the map
$$\widetilde{o} (t, \tau) = (o (t+ \theta _{0} \cdot
\frac{\tau }{n}), \tau).$$
Then $\widetilde{o} $ is well defined on the quotient
$T ^{2} \simeq S ^{1} \times ([0,n]/0 \sim n) $, and we
denote the quotient map by $u ^{n} _{o}$, called the
\emph{charge $n$ generalized Reeb torus of $o$}. If the
class $[o] \in \pi _{1}  ({C}) = \beta $ we denote by $A ^{n} _{\beta }$ the
class of $u ^{n} _{o}$ in $H _{2} (M, \mathbb{Z} )$. The class $A ^{n} _{\beta }$ can be
defined more generally whenever ${\phi} ^{n} _{*}
({\beta}) = {\beta } $, it is the class of a torus map $T
^{2} \to M$ defined analogously to the map
$u ^{n} _{o}$, but no longer having the Reeb 2-curve
property. We may abbreviate $A ^{1} _{\beta }$ by $A _{\beta
}$.

By construction,  $u ^{n} _{o}$ is a charge $n$ Reeb 2-curve and its image is an embedded $J ^{\lambda
_{\phi}}$-holomorphic torus $\mathcal{T} $. Moreover, $u ^{n} _{o}$ is 
$J ^{\lambda _{\phi}}$-holomorphic with respect to
a uniquely determined complex structure on $T ^{2}$,
similarly to the case of Reeb tori of Section \ref{sec:Reeb holomorphic tori simple}.
However, unlike the case of Reeb tori, this complex
structure is not ``rectangular'' unless $\phi ^{n} \circ o = o$.
\begin{proposition} \label{prop:Topembedding} Let $M = (M
_{\phi}, \lambda _{\phi}, \alpha)$ be the
mapping torus of a strict contactomorphism $\phi $ as above. Then:
\begin{enumerate}
\item Every charge $n$ Reeb 2-curve $u$ in $M$  has
a factorization:
\begin{equation} \label{eq:factorization}
u = u ^{n} _{o} \circ \rho,
\end{equation}
for $\rho: \Sigma \to T ^{2}$ some degree one map, and for
some orbit string $o$ uniquely determined by $u$. 
\item Every element $u \in \overline{\mathcal{M}} ^{n}
_{1,1} (J ^{\lambda _{\phi}}, A ^{n} _{\beta}, \beta)$ is represented (as an
equivalence class in this moduli space) by $u ^{n} _{o}$,
where the latter is as above, for some $o$ uniquely determined.
\item The Fredholm index of the corresponding real linear CR
operator is 2, so that the expected dimension
of $\overline{\mathcal{M}} ^{n}
_{1,1} (J ^{\lambda _{\phi}}, A ^{n} _{\beta}, \beta)$ is 0.
\label{label_fredholm}
\item Let $J$ be a $(\lambda _{\phi}, \alpha)$-admissible
almost complex structure on $M$. There is a natural proper topological embedding:
\begin{equation*}
emb: \overline{\mathcal{M}} ^{n} _{1,1} (J, 
A ^{n} _{\beta}, \beta) \to \mathcal{O} (R
^{\lambda }, \beta ),
\end{equation*}
defined by $u \mapsto o$, where $o$ is uniquely
determined by the condition \eqref{eq:factorization}. \label{part4_propTopEmbedding}
\end{enumerate}
\end{proposition}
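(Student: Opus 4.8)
The plan is to adapt the proof of Proposition \ref{prop:abstractmomentmap} to the mapping torus $(M,\lambda_\phi,\alpha)$, which is again a tamed first kind lcs manifold with $\mathcal{D}=\mathcal{V}_\lambda=\ker d\lambda$, an integrable $2$-distribution since $d\lambda$ is closed of constant rank $2n-2$, spanned by $X_\lambda$ (the $\partial_\theta$-direction) and $X_\alpha=R^{\lambda_\theta}$. For (1), let $u\colon\Sigma\to M$ be a charge $n$ Reeb 2-curve, which I may take connected, the charge being carried by a single component. Since $u$ is tangent to $\mathcal{D}$, the compact set $\image u$ lies in a single leaf of $\mathcal{D}$. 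The charge condition forces $pr_{S^1}\circ u\colon\Sigma\to S^1$ to have non-trivial winding, hence to be surjective, so by Sard there is a regular value $t_0\in S^1$ and an embedded circle $S_0\subset(pr_{S^1}\circ u)^{-1}(t_0)$. Along $S_0$ the map $u$ lands in the fibre over $t_0$, and $TS_0\subset\ker d(pr_{S^1}\circ u)$ together with tangency to $\mathcal{D}=\spann(X_\lambda,X_\alpha)$ forces $u_{*}(TS_0)\subset\spann(X_\alpha)=\ker d\lambda_{t_0}$; hence $o:=u|_{S_0}$ is, up to reparametrisation, a closed $\lambda_{t_0}$-Reeb orbit, which we regard as an orbit string in $\mathcal{O}(R^\lambda,\beta)$ via the mapping-torus structure. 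Exactly as in Proposition \ref{prop:abstractmomentmap}, following $u$ along the leaf shows that $\image u$ is the flow-out of $o$ under $X_\lambda,X_\alpha$, namely the embedded torus $\mathcal{T}=\image u^n_o$, the wrapping number of $pr_{S^1}\circ u$ matching both the charge $n$ and the wrapping of $u^n_o$. Taking $o$ so that $u^n_o$ is normal, which determines $o$ together with its multiplicity uniquely, the composite $\rho:=(u^n_o)^{-1}\circ u\colon\Sigma\to T^2$ is well defined and continuous, and a bookkeeping comparison of charges and $\pi$-classes (Definition \ref{def:charge}) of $u$ and $u^n_o$ gives $\deg\rho=1$, which is \eqref{eq:factorization}.

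For (2), let $u\in\overline{\mathcal{M}}^n_{1,1}(J^{\lambda_\phi},A^n_\beta,\beta)$. By Lemma \ref{lemma:Reeb} any non-constant nodal $J^{\lambda_\phi}$-curve is a Reeb 2-curve; a spherical component would then be a Reeb 2-curve on $\mathbb{CP}^1$, impossible as $H^1(S^2,\mathbb{R})=0$ violates condition (2) of the definition, whereas a constant component would force $u$ constant, contradicting charge $n\ge 1$. So $\Sigma=T^2$ is smooth and part (1) applies: $u=u^n_o\circ\rho$ with $\rho\colon T^2\to T^2$ of degree one, hence holomorphic since $u^n_o$ is an immersion, hence a biholomorphism; thus $u$ is equivalent to $u^n_o$, with $o$ unique by (1). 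For (3), for a smooth curve $u=u^n_o$ the operator \eqref{eq:fullD} is built exactly as in Proposition \ref{prop:regular} via the splitting $u^{*}TM\cong N\oplus TT^2$ with $N$ the pullback of the co-vanishing distribution; the flat symplectic connection on $N$ is defined as there, now incorporating $d\phi^n$ along the $\tau$-direction but still flat, so $c_1(N)=0$ and the $N$-part has index $0$. Since also $c_1(\mathcal{D})=0$, we get $c_1(TM)\cdot A^n_\beta=0$, so the index of \eqref{eq:fullD} is $2\,c_1(TM)\cdot A^n_\beta+\dim_{\mathbb{R}}T_jM_{1,1}=0+2=2$, and the expected dimension is $2-\dim\mathcal{G}(\Sigma)=2-2=0$.

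For (4), set $emb(u)=o$, the orbit string provided by (1) and (2); it is well defined by the uniqueness of $o$ and injective, since $emb(u_1)=emb(u_2)=o$ gives $u_1\sim u^n_o\sim u_2$. It is continuous: if $u_k\to u$ in the moduli space, then by (2) all domains are smooth tori and no bubbling occurs, so $u^n_{o_k}\to u^n_o$ in $C^\infty$, whence $o_k\to o$ in $\mathcal{O}(R^\lambda,\beta)$. It is proper: if $\{o_k=emb(u_k)\}$ lies in a compact $K\subset\mathcal{O}(R^\lambda,\beta)$, then the periods of the $o_k$, hence the energies $e(u_k)=e(u^n_{o_k})$, are uniformly bounded, so by Gromov compactness a subsequence converges, necessarily, with no bubbling, to a smooth $J$-holomorphic $u^n_o$ with $o=\lim o_k\in K$; and $o$ still satisfies $\image(\phi^n\circ o)=\image o$, this being a closed condition, so $u^n_o$ lies in the moduli space. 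Hence $emb^{-1}(K)$ is compact and $emb$ is a proper topological embedding.

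The main obstacle is part (1): showing, in the spirit of Proposition \ref{prop:abstractmomentmap}, that the image of an arbitrary Reeb 2-curve is exactly a generalised Reeb torus, and extracting from this the degree-one factorisation, in particular matching the charge $n$ with the wrapping number and handling possibly multiply covered orbit strings so that $o$ is genuinely uniquely determined. Once (1) is in place, (2)–(4) are short and standard.
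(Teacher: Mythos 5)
Your overall route is the paper's own: reduce everything to (an adaptation of) Proposition \ref{prop:abstractmomentmap}, rule out spherical components via Lemma \ref{lemma:Reeb}, get the index in part (3) from $c_{1}(A^{n}_{\beta})=0$ via a flat connection and Riemann--Roch, and read part (4) off from parts (1)--(2); parts (2)--(4) are fine in outline modulo part (1). But the step you yourself flag as the main obstacle is precisely where your argument breaks, and it is not cosmetic: your construction of $\rho$ is incompatible with multiply covered curves. If you ``take $o$ so that $u^{n}_{o}$ is normal'' in the sense of embedded, i.e.\ $o$ simple, then $\rho:=(u^{n}_{o})^{-1}\circ u$ is indeed defined, but its degree equals the covering degree of $u$ onto $\mathcal{T}$, which need not be $1$: already in the lcs-fication ($\phi=id$, $n=1$), the Reeb torus of the double cover $o^{2}$ of a simple orbit $o$ is a charge-one Reeb 2-curve with image $\image u^{1}_{o}$, and for it $\deg\rho=2$. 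No bookkeeping of charge and $\pi$-class can force $\deg\rho=1$ here; what the bookkeeping actually shows is that the simple orbit is the wrong choice (its $\pi$-class is $[o]$, not $2[o]$, so it need not even lie in $\mathcal{O}(R^{\lambda},\beta)$), and the orbit string the proposition wants is the multiple cover, with multiplicity equal to $\deg(u\to\mathcal{T})$ --- this is exactly the datum that part (4) and Theorem \ref{thm:GWFullerMain} need in order to produce the Fuller weights $1/\mult(o)$. But once $o$ is multiply covered, $u^{n}_{o}$ is no longer injective and the pointwise inverse $(u^{n}_{o})^{-1}$ does not exist, so your definition of $\rho$ collapses; as written, part (1) is either false (wrong degree, wrong $o$) or undefined.

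The missing ingredient is the lifting step that closes the proof of Proposition \ref{prop:abstractmomentmap}: having shown $\image u\subset\mathcal{T}$ and that $u$ is a covering of $\mathcal{T}$ of some degree $d$ (holomorphic in part (2)), take $o$ to be the $d$-fold cover of the underlying simple orbit string; then $u^{n}_{o}:T^{2}\to\mathcal{T}$ is also a degree $d$ covering, the charge and $\pi$-class comparison identifies the images of $\pi_{1}$ under $u$ and $u^{n}_{o}$ inside $\pi_{1}(\mathcal{T})$, and covering space theory produces $\rho$ with $u=u^{n}_{o}\circ\rho$ and $\deg\rho=1$; in the holomorphic case $\rho$ is then a biholomorphism, which is how the paper gets part (2). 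With this replacement your parts (2) and (4) go through as outlined. A smaller point: in part (2) a constant spherical component does not force $u$ to be constant; such components are excluded by stability of the one-marked-point nodal domain, while non-constant spherical components are excluded by Lemma \ref{lemma:Reeb} as you say.
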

\begin{proof}
The proof of part one is completely analogous to the proof
of Proposition \ref{prop:abstractmomentmap}. To prove the second
part, first note that by the first part $u$ has image
$\mathcal{T} = \image u ^{n} _{o}$ for some $n, o$, and
$\mathcal{T} $ is an embedded $J ^{\lambda
_{\phi}}$-holomorphic torus. By basic theory of mappings of
complex tori $u$ must be a covering map $\Sigma \to
\mathcal{T} $. Since we know the charge $n$, as in final
part of the proof
of Proposition \ref{prop:abstractmomentmap}, we may conclude
that $u \simeq u ^{n} _{o}$ for some uniquely determined
$o$, where $\simeq $ is an isomorphism.

We prove Part \ref{label_fredholm}. Note  that $c _{1} (A
^{n} _{\beta }) = 0$, as by construction the complex tangent
bundle along $u ^{n} _{o}$ admits a flat connection, induced
by the natural $\mathcal{G} $-connection on $M _{\phi} \to
S ^{1}$, for $\mathcal{G} $ the group of strict
contactomorphisms of $(C, \lambda )$, 
cf. Proof of Proposition \ref{prop:regular}.
The needed fact then follows by the index/Riemann-Roch theorem.

The last part of the proposition readily follows from the
first part.
\end{proof}
\begin{proof} [Proof of Theorem \ref{theorem_counterexample}]
Let $u: \Sigma \to (M = M _{\widetilde{\phi},1})$ be a Reeb 2-curve in the
mapping torus as in the statement. By part one of
Proposition \ref{prop:Topembedding}, there must be
a generalized charge $n$ Reeb torus in $M$. By definitions
this means that $\widetilde{\phi}  $ has a charge $n$ fixed
Reeb string, so that $\phi $ has a charge $n$ fixed geodesic
string, which is impossible by assumptions.
\end{proof}

\begin{proposition} \label{thm:holomorphicSeifert} 
Let $(C,\lambda) $ be a contact manifold 
with $\lambda$ satisfying one of the following conditions:
\begin{enumerate}
	\item There is a non-degenerate $\lambda $-Reeb orbit. 
	\item $i (N,R ^{\lambda}, \beta) \label{condition:prop1} 
\neq 0$ for some open compact $N \subset \mathcal{O} (R
^{\lambda }, \beta ) $, and some $ \beta$.
\end{enumerate}
Then:
\begin{enumerate}
   \item  Let $(\lambda, \alpha) $ be the lcs-fication of
	 $(C, \lambda ) $. There exists an $\epsilon>0$ s.t. for any tamed exact lcs structure $(\lambda', \alpha', J)$  on $M=C \times 
S ^{1}$, with $(d _{\alpha'} \lambda', J)$ 
$\epsilon$-close to $(d_{\alpha}\lambda, J 
^{\lambda})$ (as in Definition 
\ref{def:deltaclose}), there exists an elliptic, 
$J$-holomorphic $\alpha $-charge 1 curve $u$ in $M$.
\item In addition, if $(M, \lambda', \alpha')$ is first 
kind and has dimension $4$ then $u$ may be 
assumed to be normal and embedded.
\end{enumerate}
\end{proposition}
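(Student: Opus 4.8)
The plan is to reduce the existence statement to the non‑vanishing of a Gromov--Witten count for the standard tamed first kind structure $J^{\lambda}$ on the compact manifold $M=C\times S^{1}$, and then to transport that non‑vanishing to the nearby structures using the perturbation machinery of Section~\ref{sec:elements}.

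I would first produce the model count. In case (2) we are handed an open compact $N\subset\mathcal{O}(R^{\lambda},\beta)$ with $i(N,R^{\lambda},\beta)\neq0$. In case (1), a non‑degenerate Reeb orbit $o$ is isolated in its orbit space (the linearized return map has no $1$‑eigenvalue), so, with $\beta:=[o]$, the set $N:=\{o\}$ is open and compact and $i(N,R^{\lambda},\beta)=(-1)^{CZ(o)-n}/\mult(o)\neq0$ by Proposition~\ref{prop:regular2} (equivalently, by Proposition~\ref{prop:regular} the Reeb torus $u_{o}$ is a transversally cut out point of the moduli space, contributing $\pm1$). In either case Proposition~\ref{prop:abstractmomentmap} identifies $N':=\mathcal{P}(N)$ with an open compact subset of $\overline{\mathcal{M}}^{1}_{1,1}(J^{\lambda},A_{\beta},\beta)$, and Theorem~\ref{thm:GWFullerMain} gives
\[
GW^{1}_{1,1}\big(N',J^{\lambda},A_{\beta},\beta\big)\big([\overline{M}_{1,1}]\otimes[C\times S^{1}]\big)=i(N,R^{\lambda},\beta)\neq0.
\]

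Since $M$ is compact, Proposition~\ref{thm:nearbyGW} now applies: its proof interpolates the almost symplectic pairs via Lemma~\ref{lemma:NearbyEnergyDeformation} (the nearby form $\omega'=d_{\alpha'}\lambda'$ entering only to bound energy for Gromov compactness of the cobordism moduli space) and uses cobordism invariance, Lemma~\ref{prop:invariance1}. It yields an $\epsilon>0$ so that whenever the almost symplectic pair $(d_{\alpha'}\lambda',J)$ is $\epsilon$‑close to $(d_{\alpha}\lambda,J^{\lambda})$ (Definition~\ref{def:deltaclose}) one still has $GW^{1}_{1,1}(N_{1},J,A_{\beta},\beta)(\cdots)\neq0$ for a suitable open compact $N_{1}$; in particular $\overline{\mathcal{M}}^{1}_{1,1}(J,A_{\beta},\beta)\neq\emptyset$. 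By construction any element of this moduli space is a stable, $\alpha$‑charge $1$, genus $1$ (possibly nodal) $J$‑holomorphic curve, so taking $d_{\infty}((\lambda',\alpha'),(\lambda,\alpha))<\epsilon$ (which puts the tamed data this close) proves part (1).

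For part (2) assume in addition $(M,\lambda',\alpha')$ is first kind of dimension $4$, so $(\lambda',\alpha',J)$ is a tamed first kind lcs manifold and Lemma~\ref{lemma:Reeb} forces the curve $u$ above to be a Reeb $2$‑curve. I would first show $u$ is normal: there is no non‑constant $J$‑holomorphic sphere (by Lemma~\ref{lemma:Reeb} it would be a Reeb $2$‑curve, impossible since $H^{1}(S^{2})=0$), while the $\alpha$‑charge $1$ condition forbids $u$ from being constant; a bubbling analysis together with stability then excludes every nodal configuration, so the domain is a smooth torus. To upgrade this to embeddedness I would use the dimension‑$4$ tools: $c_{1}(A_{\beta})=0$ and $A_{\beta}\cdot A_{\beta}=0$ (both homotopy/homological, and computed for $J^{\lambda}$ from the flat symplectic connection along a Reeb torus, cf. the proof of Proposition~\ref{prop:Topembedding}), so that positivity of intersections and the adjunction formula for $J$‑holomorphic curves in almost complex $4$‑manifolds force the double‑point count $\delta(u)$ to vanish as soon as $u$ is somewhere injective. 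For a non‑degenerate orbit one gets somewhere injectivity (indeed embeddedness) for free by taking the underlying simple orbit $o_{0}$ — still non‑degenerate — and noting that $u_{o_{0}}$ is regular and embedded, so the nearby regular curve stays embedded; in general one uses that a charge‑$1$ $J$‑holomorphic torus covering a simple curve would have $\pi$‑class a proper multiple of the latter's, incompatible with $\beta$ being primitive. This last step — securing somewhere injectivity and running adjunction — is the part I expect to be the main obstacle; everything preceding it is bookkeeping with results already established.
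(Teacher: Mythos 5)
Your part (1) is essentially the paper's own argument: reduce condition (1) to condition (2) by taking $N=\{o\}$ for a non-degenerate orbit, identify $\mathcal{P}(N)$ with an open compact subset of $\overline{\mathcal{M}}^{1}_{1,1}(J^{\lambda},A_{\beta},\beta)$, invoke Theorem \ref{thm:GWFullerMain} to get a nonzero count, and conclude with Proposition \ref{thm:nearbyGW}. Your normality argument (Lemma \ref{lemma:Reeb} kills non-constant spheres since $H^{1}(S^{2})=0$, and stability plus charge $1$ then rules out nodal configurations) is a workable variant of the paper's route, which instead observes that a non-normal curve would have a null-homologous spherical normalization with a self-intersection, contradicting positivity of intersections. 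The genuine gap is the embeddedness step, precisely where you flagged trouble: you try to show that the curve $u$ itself is somewhere injective, and neither of your devices achieves this. Appealing to ``$\beta$ primitive'' imports a hypothesis the proposition does not make (condition (1) produces some non-degenerate orbit whose free homotopy class may well be a multiple class, and condition (2) allows arbitrary $\beta$); moreover charge $1$ does not preclude multiple covering -- the Reeb torus of a $k$-fold covered orbit is a charge-$1$ torus which $k$-fold covers the simple Reeb torus, so multiply covered charge-$1$ curves genuinely occur in these moduli spaces. The ``nearby regular curve stays embedded'' argument is likewise unavailable: Proposition \ref{thm:nearbyGW} gives existence of some $J$-curve but no statement that it is Gromov-close to a particular embedded Reeb torus, and in case (2) there need not even be a distinguished non-degenerate orbit to compare with.

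The paper's proof avoids somewhere injectivity of $u$ altogether. Since the conclusion only asserts that the curve ``may be assumed'' normal and embedded, one replaces $u$ by a simple $J$-holomorphic curve $\underline{u}$ that it covers. Because non-constant $J$-spheres are excluded, the domain of $\underline{u}$ has Euler characteristic $0$, so $\underline{u}$ is again an elliptic (and still charge-$1$) curve; its class $A'$ still has $c_{1}(A')=0$ and $A'\cdot A'=0$, and now the McDuff--Micallef--White adjunction inequality applies legitimately -- it is a statement about simple curves -- and is forced to be an equality with $\delta(\underline{u})=0$, so $\underline{u}$ is embedded. With that single substitution (run adjunction on the underlying simple curve rather than trying to prove $u$ is simple), your outline closes and agrees with the paper.
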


\begin{proof} 
If we have a closed non-degenerate $\lambda$-Reeb orbit $o$ then we also have an open compact subset 
$N = \{o\} \subset S _{\lambda }$.
Thus suppose that the condition \ref{condition:prop1} 
holds. 

Set $$ 
(\widetilde{N} := \mathcal{P} (N)) \subset \overline{\mathcal{M}} ^{1}  
_{1,1} ( J ^{\lambda}, A _{\beta }, {\beta}),  $$ which is 
an open compact set.
By Theorem \ref{thm:GWFullerMain}, and by the 
assumption that $i (N, R _{\lambda }, \beta ) \neq 0 $ $$GW
^{1} _{1,1} (N,  J ^{\lambda}, A _{\beta }, \beta)  \neq 0. $$   The
first part of the conclusion then follows by Proposition \ref{thm:nearbyGW}. 

We now verify the second part.  Suppose that $M$ 
has dimension 4. Let $U$ be an 
$\epsilon$-neighborhood of $(\lambda, \alpha, J 
^{\lambda } ) $, for $\epsilon$  as given in the first part, and let $(\lambda', \alpha', J) \in U $. Suppose that $u 
\in \overline{\mathcal{M}} ^{1}  _{1,1} ( J, \beta ) 
$. Let $\underline{u}$ be a simple 
$J$-holomorphic curve covered by $u$, (see for 
instance ~\cite[Section 2.5]
{cite_McDuffSalamonJholomorphiccurvesandsymplectictopology}.

For convenience, we now recall the adjunction inequality. 
\begin{theorem} [McDuff-Micallef-White
\cite{cite_MicallefWhite}, \cite{cite_McDuffPositivity}] 
 Let $(M, J)$ be an almost complex 4-manifold and let
 $A \in H_2(M)$ be a homology class that is 
 represented by a simple J-holomorphic curve $u: 
 \Sigma \to M$.  Let $\delta (u) $ denote the number of 
 self-intersections of $u$, then \begin{equation*}
2\delta (u) - \chi (\Sigma) \leq A\cdot A -c _{1} (A),
\end{equation*}
with equality if and only if $u$  is an immersion with only transverse self-intersections. 
\end{theorem}
In our case $A= A _{\beta }$ so that $c _{1} (A) =0$ and $A \cdot A = 0$. If ${u}$ is not normal its normalization is of the form 
$\widetilde{{u}}: \mathbb{CP} ^{1} \to M$ with 
at least one self intersection and with $0 = 
[\widetilde{{u}} ] \in H _{2} (M)$, but this contradicts positivity of 
intersections. So $u$ and hence $\underline{u}$ are normal.
Moreover, the domain $\Sigma'$ of $\underline {u}
$ satisfies: $\chi 
(\Sigma')= \chi (T ^{2}) = 0$, so that $\delta 
(\underline{u})=0$, and the above inequality is an 
equality. In particular $\underline{u}$ is an embedding,
which of course implies our claim.

\end{proof}

\begin{proof} [Proof of Theorem \ref{thm:C0Weinstein}]
Let $$U \ni (\omega _{0}:= d_{\alpha} {\lambda}, J _{0}:= J ^{\lambda})$$ be a set of pairs 
$(\omega, J)$ satisfying the 
following:
\begin{itemize}
\item $\omega$ is a first kind $\lcs$ structure.  
\item For each $(\omega, J) 
\in U$, $J$ is $\omega$-compatible and admissible.
\item Let $\epsilon$ be chosen as in the first 
part of Proposition \ref{thm:holomorphicSeifert}. Then each 
$(\omega, J) \in U$ is $\epsilon$-close to 
$(\omega _{0}, J _{0})$, (as in Definition 
\ref{def:deltaclose}). 
\end{itemize}
To prove the theorem we need to construct a map 
$E: V \to \mathcal{J} (M)$, where $V$ is some  
neighborhood of $\omega _{0}$  in the space 
$(\mathcal{F} (M), d _{\infty })$ (see Definition 
\ref{def:LM}) and where $$\forall \omega \in V: (\omega, E (\omega)) \in 
{U}.$$ 
As then Proposition \ref{thm:holomorphicSeifert} tells us that 
for each $\omega \in V$, there is a class $A$, $E 
(\omega) $-holomorphic, elliptic curve $u$ in $M$.  
Using Lemma \ref{lemma:Reeb} we would then 
conclude that there is an elliptic Reeb 2-curve $u$ in $(M, 
\omega)$. If $M$ has dimension 4 then in addition $u$ may be 
assumed to be normal and embedded.
If $\omega$ is integral, by Proposition 
\ref{thm:holomorphicSeifert}, $u$ may be assumed to be 
charge 1. And so we will be done.

Define a metric $\rho _{0} $ measuring the distance between subspaces  $W _{1}, W _{2}  $, of same dimension, of an inner product space $(T,g)$ as follows. $$\rho _{0} (W _{1}, W _{2}  ) := |P _{W _{1} } - P _{W _{2} }  |, $$ for $|\cdot|$ the $g$-operator norm, and $P _{W _{i} } $ $g$-projection operators onto $W _{i} $. 

Let $\delta>0$ be given. Suppose that $\omega=d 
^{\alpha'} \lambda' $ is a first kind $\lcs$ 
structure $\delta$-close to $\omega _{0} $ for the  metric
$d _{\infty} $. Then $\mathcal{V} _{\lambda'}, \xi 
_{\lambda'}$ are smooth distributions by the 
assumption that $(\alpha', \lambda') $ is a $\lcs$ 
structure of the first kind and $TM = \mathcal{V} 
_{\lambda'} \oplus \xi 
_{\lambda'}$.  Moreover, $$\rho
_{\infty} (\mathcal{V} _{\lambda'}, \mathcal{V}
_{\lambda}) < \epsilon _{\delta}  $$ and  $$\rho
_{\infty} (\xi _{\lambda'}, \xi _{\lambda})
< \epsilon _{\delta}  $$  where $\epsilon _{\delta} \to
0 $ as $\delta \to 0$, and where $\rho _{\infty} $ is the
$C ^{\infty }$ analogue of the metric $\rho _{0}$, for the
family of subspaces of the family of inner product spaces $(T _{p} M,g) $.

Then choosing $\delta$ to be suitably be small, for each $p \in M$ we have an isomorphism $$\phi (p): T _{p} M \to T _{p} M,   $$ $\phi _{p}:= P _{1} \oplus P _{2}   $, for $P _{1}: \mathcal{V}_ {\lambda _{0} } (p) \to \mathcal{V} _{\lambda'} (p) $, $P _{2}: \xi _{\lambda _{0}} (p) \to \xi _{\lambda'} (p)   $ the $g$-projection operators. 
Define $E (\omega) (p):= \phi (p)_*J _{0}  $.
Then clearly, if $\delta$ was chosen to be 
sufficiently small, if we take $V$ to be the $\delta $-ball in
$(\mathcal{F} (M), d _{\infty }) $ centered at $\omega
_{0}$, then it has the needed property.

%
%
\end{proof}

\begin{definition}\label{def:classifyingmap}
Let $\alpha $ be a scale integral closed 1-form on a closed
smooth manifold $M$. Let $0 \neq c \in \mathbb{\mathbb{R} ^{}} $ be such that $c \alpha $ is integral. A \textbf{\emph{classifying map}}  $p: M \to 
S ^{1}$ of $\alpha$ is a smooth map s.t.  $c \alpha = p ^{*}
d\theta $. A map $p$ with these properties is of course not
unique. 
\end{definition}
\begin{lemma}
   \label{lemma:ReebCurveRational} Let $u: \Sigma 
   \to M$ be a Reeb 2-curve in a closed, scale integral,
   first kind lcs manifold $(M, \lambda, \alpha ) 
   $, then its normalization  $\widetilde{u}: 
   \widetilde{\Sigma}  \to M$ is a Reeb 2-curve. 
\end{lemma}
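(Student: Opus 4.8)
The plan is to verify the two conditions of Definition \ref{def:Reeb2curve} for $\widetilde u$, starting from the observation that the normalization of a surface with no nodes is the surface itself, so that condition (1) for ``$\widetilde u$ is a Reeb 2-curve'' is word-for-word the condition that ``$u$ is a Reeb 2-curve'' already imposes on $\widetilde u$. Hence condition (1) transfers for free, and the entire content is condition (2): that $0\neq[\widetilde u^*\alpha]\in H^1(\widetilde\Sigma,\mathbb R)$.

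For condition (2) I would argue by contradiction. Write $\widetilde\Sigma=\bigsqcup_v\widetilde\Sigma_v$ as a disjoint union of its connected components, each a closed Riemann surface, and suppose $[\widetilde u^*\alpha]=0$, so $\widetilde u^*\alpha=dh_v$ for a smooth $h_v$ on each $\widetilde\Sigma_v$. Since $M$ is first kind, $\mathcal D=\mathcal V_\lambda=\ker d\lambda$ is a genuine $2$-plane field everywhere, so condition (1) forces $\widetilde u^*d\lambda=0$ exactly as in the proof of Lemma \ref{lemma:calibrated} (where $d\widetilde u\neq0$ its image is $\mathcal D$, on which $d\lambda$ vanishes; where $d\widetilde u=0$ the pullback vanishes trivially). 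Then on each component, using $\omega=d\lambda-\alpha\wedge\lambda$ and $d(\widetilde u^*\lambda)=\widetilde u^*d\lambda=0$,
$$\widetilde u^*\omega=-dh_v\wedge\widetilde u^*\lambda=-d(h_v\,\widetilde u^*\lambda),$$
so $\int_{\widetilde\Sigma_v}\widetilde u^*\omega=0$ by Stokes' theorem. On the other hand $\int_{\widetilde\Sigma_v}\widetilde u^*\omega\geq0$, with equality only if $\widetilde u|_{\widetilde\Sigma_v}$ is constant: at each point where $d\widetilde u\neq0$, condition (1) and first-kindness force $d\widetilde u$ to have rank two and to carry $T_z\widetilde\Sigma_v$ isomorphically onto $\mathcal D(\widetilde u(z))$, on which $\omega$ restricts to an area form ($\omega(X_\lambda,X_\alpha)\equiv1$ by Example \ref{example:firstKindtamed}); so $\widetilde u^*\omega$ is a nonzero multiple of the area form of $\widetilde\Sigma_v$ there, of a fixed sign which we may take positive, and it vanishes identically precisely when the rank-two locus is empty, i.e.\ when $\widetilde u|_{\widetilde\Sigma_v}$ is constant by the rank $\{0,2\}$ dichotomy. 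Combining the two facts, $\widetilde u$ is constant on each component of $\widetilde\Sigma$; gluing along the nodes then makes $u=\widetilde u\circ q_\Sigma$ constant on each component of $\Sigma$, whence $[u^*\alpha]=0$, contradicting condition (2) for $u$. Therefore $[\widetilde u^*\alpha]\neq0$ and $\widetilde u$ is a Reeb 2-curve.

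The step I expect to be the main obstacle is the positivity claim, specifically that the signed Jacobian of $\widetilde u$ against the natural orientation of $\mathcal D$ does not change sign over the rank-two locus. When the Reeb 2-curve is $J$-holomorphic for an $(\lambda,\alpha)$-admissible $J$ — the only case used in this paper, via Lemma \ref{lemma:Reeb} — this is automatic, since $J|_\mathcal D$ is $\omega|_\mathcal D$-compatible and then $\widetilde u^*\omega=\tfrac12\|d\widetilde u\|^2_{g_J}\,d\mathrm{area}\geq0$ pointwise. For a general smooth Reeb 2-curve one uses that condition (1) makes the rank-two locus open with $\widetilde u$ an immersion there onto integral plaques of $\mathcal D$, so the sign is locally constant and extends across the nowhere-dense zero locus. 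Scale integrality supplies an independent route: by Theorem \ref{thm:firstkindtorus}, $M$ is then the mapping torus of a strict contactomorphism, so one may instead invoke the factorization $u=u^n_o\circ\rho$ of Proposition \ref{prop:Topembedding} and note that the period $\langle\rho,(u^n_o)^*\alpha\rangle=\pm n\neq0$ survives pull-back along the degree-one map on the genus-one component of $\widetilde\Sigma$. Everything else in the argument is bookkeeping with Stokes' theorem and the definitions.
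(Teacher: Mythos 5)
Your reduction to condition (2) and the identity $\widetilde u^*\omega=-d(h_v\,\widetilde u^*\lambda)$ are fine, and in the $J$-holomorphic case your argument is complete. But the lemma is stated for arbitrary smooth Reeb 2-curves, and that generality is actually used (Theorem \ref{theorem_counterexample} and part one of Proposition \ref{prop:Topembedding} concern Reeb 2-curves that are not assumed $J$-holomorphic). In that generality your positivity step is a genuine gap: condition (1) only forces the rank of $d\widetilde u$ to lie in $\{0,2\}$, and a smooth map tangent to $\mathcal D$ can be orientation-preserving on one part of the rank-two locus and orientation-reversing on another, the two parts being separated by the rank-zero locus. Your patch ``the sign is locally constant and extends across the nowhere-dense zero locus'' is false: the zero locus may have interior, and even when it is a curve the sign need not match across it --- e.g.\ the map $(x,y)\mapsto(x^{3}\cos y,\;x^{3}\sin y)$ into a leaf of $\mathcal D$ has rank $2$ off $\{x=0\}$, rank $0$ on it, and Jacobian $3x^{5}$ changing sign. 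Hence $\int_{\widetilde\Sigma_v}\widetilde u^*\omega$ can vanish for a non-constant $\widetilde u$ and no contradiction follows. Your second fallback is also unavailable: Proposition \ref{prop:Topembedding} (part one) is proved ``analogously to Proposition \ref{prop:abstractmomentmap}'', whose proof invokes precisely Lemma \ref{lemma:ReebCurveRational}, so appealing to it here is circular; moreover it presupposes an elliptic domain with a genus-one component surviving in $\widetilde\Sigma$, whereas the lemma's main use is exactly the case where normalization drops the genus to zero.

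For comparison, the paper's proof is of a different, fiberwise type and sidesteps the orientation problem. Scale integrality gives a classifying map $p:M\to S^{1}$ of $\alpha$ whose fibers are contact with form $\lambda_t$; assuming $[\widetilde u^*\alpha]=0$ while $[u^*\alpha]\neq0$, one finds an embedded disk in $\widetilde\Sigma$ whose boundary is carried to a component of a regular fiber of the classifying map of $u^*\alpha$. Along that circle $u^*\alpha\neq0$, so $d\widetilde u\neq0$ and condition (1) forces the image to be tangent to $\mathcal D\cap\ker\alpha=\mathbb R\,X_\alpha$, i.e.\ the boundary maps to an unparametrized Reeb orbit of a fiber; since $\lambda(X_\alpha)=1$, the $1$-form $\widetilde u^*\lambda$ is nowhere zero on the circle and $\int_{\partial D}\widetilde u^*\lambda\neq0$, while $\widetilde u^*d\lambda=0$ on the disk, contradicting Stokes. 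Integrating $\lambda$ over a circle on which it has an automatic sign, rather than $\omega$ over the whole surface, is what removes the sign ambiguity your argument runs into; to salvage your scheme you would either have to restrict the lemma to $J$-holomorphic curves (insufficient for the paper) or supply an argument of this fiberwise kind.
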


\begin{proof}
	 By Lemma \ref{lemma:alphaclass} we have a surjective 
   classifying map $p: M \to S ^{1}$ of $\alpha$. 
   Note that the fibers $M _{t}$ of $p$, for all $t \in S ^{1}$,  
   are contact with contact form $\lambda _{t} = \lambda| _{C _{t}} 
   $, as $0 \neq \omega ^{n} = \alpha  \wedge 
   \lambda  \wedge d \lambda ^{n-1} $ and $c \cdot \alpha 
   = 0$ on $M _{t}$, where $c$ is as in the definition of
	 $p$.

   Let $\widetilde{u}: \widetilde{\Sigma }  \to M$ 
   be the normalization of $u$. Suppose it is not 
   a Reeb 2-curve, which in this case, by definitions, just means 
   that $0 =  [\widetilde{u} ^{*} \alpha] \in H 
   ^{1} (\widetilde{\Sigma }, \mathbb{R} ^{} ) $.
   Since $0 \neq  [{u} ^{*} \alpha] \in H ^{1} 
   ({\Sigma }, \mathbb{R} ^{} )$, some node $z 
   _{0}$ of $\Sigma $ lies on closed loop $o: S 
   ^{1} \to \Sigma $ with $\langle [o], [u 
   ^{*}\alpha ]   \rangle \neq 0$. 
	 
	 Let $q _{\Sigma }: \widetilde{\Sigma } \to \Sigma $ be
	 the quotient map as previously appearing.
	 In this case, we may find a smooth embedding $\eta: D ^{2}  
   \to \widetilde{\Sigma}$, s.t. $q _{\Sigma } \circ \eta
	 (D ^{2})| _{\partial D ^{2}} $ is a component of
	 a regular fiber $C _{t}$, of the classifying map $p':
	 {\Sigma}  \to S ^{1}$ of ${u}  ^{*} \alpha $. See Figure \ref{figure:disk},
 $\eta (D
	 ^{2}) $  is a certain disk in $\widetilde{\Sigma} $,
	 whose interior contains  an element of $\phi ^{-1}(z
	 _{0})$. 	 
	\begin{figure}[h]
 \includegraphics[width=3.0in]{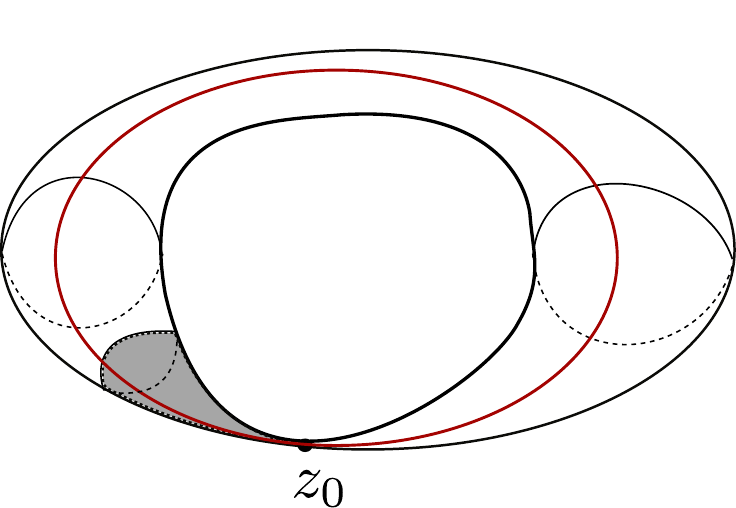}
 \caption {The figure for $\Sigma $. The gray shaded area is
 the image $q _{\Sigma} \circ \eta (D ^{2}) $. The red shaded curve is the image of the closed loop $o$ as above.} \label{figure:disk}
\end{figure}   
   

Then analogously to the proof of Proposition \ref{prop:abstractmomentmap} $\widetilde{u}  \circ \eta| _{\partial
D ^{2}} $ is a (unparametrized)  $\lambda _{t}$-Reeb orbit in 
$M _{t}$. (The classifying maps can be arranged, such that $u(C _{t}) \subset M _{t}$.) And in particular $\int _{\partial D} 
\widetilde{u} ^{*} \lambda \neq 0$. Now 
$u$ (not $\widetilde{u} $)  is a Reeb 2-curve, and the first
condition of this implies 
that $\int _{D}  d\widetilde{u} ^{*} \lambda =0 
$, since $\ker d \lambda $ on $M$ is spanned by 
$X _{\lambda }, X _{\alpha }$. So we have a 
contradiction to Stokes theorem. Thus,  
$\widetilde{u} $ must be a Reeb 2-curve.
   
   %
\end{proof}

%


\begin{proof} [Proof of Theorem \ref{thm:basic0}] 
Let $(C, \lambda) $ and $\phi$ be as in the 
hypothesis. 
Let $\{\phi _{t}\}$, $t \in [0,1] $ be a smooth
family of strict contactomorphisms $\phi _{0} = id$, $\phi
_{1} = \phi$. This gives a smooth
fibration $\widetilde{M} \to [0,1] $, with fiber over $t
\in [0,1] $: $M _{\phi _{t},1} $, which is moreover endowed
with the first kind lcs structure $(\lambda _{\phi _{t}}, 
\alpha )$, where this is the ``mapping torus structure'' as above.  Let $tr: \widetilde{M} \to (C \times
S ^{1}) \times [0,1]  $ be a smooth trivialization,
restricting to the identity $C \times S ^{1} \to C \times
S ^{1}$  over $0$. Pushing forward by the bundle map $tr$,
the above mentioned family of lcs structures, we get
a smooth family $\{(\lambda _{t}, \alpha) \} $, $t \in [0,1]
$,   of first kind integral lcs structures on $C \times
S ^{1}$, with $(\lambda _{0}, \alpha) = (\lambda, \alpha
)  $ the standard lcs-fication of $\lambda $.


Fix a family $\{J ^{\lambda _{t}} \}$ of almost complex
structures on $C \times S ^{1}$ with each $J ^{\lambda
_{t}}$ admissible with respect $(\lambda _{t}, \alpha) $. 
Let $N \subset \mathcal{O} (R ^{\lambda }, \beta ) $ be an open
compact set satisfying $i (N, R ^{\lambda }, \beta ) \neq
0$.
The embedding $emb$ from part \ref{part4_propTopEmbedding} of
Proposition \ref{prop:Topembedding} induces
a proper embedding $$\widetilde{emb}: \widetilde{\mathcal{M}
} = \overline{ \mathcal{M}} ^{1} _{1,1} (\{J ^{\lambda _{t}}\},
A _{\beta}, \beta ) \to \mathcal{O} (R
^{\lambda}, \beta ) \times [0,1],  $$ 
defined by $\widetilde{emb} (u,t) = (emb (u),t)$.

Set $N _{0} = \mathcal{P} (N)  \subset \overline{
\mathcal{M}} ^{1} _{1,1} (J ^{\lambda _{0}}, A _{\beta}, \beta )
\subset \widetilde{\mathcal{M} } $. So that by construction
$\widetilde{emb} (N _{0}) = N 
\times \{0\}$. Set $$\widetilde{N} = \widetilde{emb}
^{-1}(\widetilde{emb}(\widetilde{\mathcal{M}
}) \cap (N \times [0,1])),  $$ then this is an open and
compact subset of $\widetilde{\mathcal{M} } $.  
And by construction $\widetilde{N} \cap
\overline{\mathcal{M}} ^{1} _{1,1} (J ^{\lambda
_{0}}, A _{\beta}, \beta) = N	_{0}$. Set $N _{1} = \widetilde{N}
\cap  \overline{\mathcal{M}} ^{1} _{1,1} (J ^{\lambda
_{1}}, A _{\beta}, \beta ) $. 


It follows by Theorem \ref{thm:GWFullerMain} that 
$$GW _{1,1} ^{1} (N _{0}, J ^{\lambda _{0}}, A _{\beta}, \beta )
([\overline {M} _{1, 1}] \otimes [C \times S ^{1} ])
= i (N, R ^{\lambda }, \beta )  \neq 0.$$
Then applying Lemma \ref{prop:invariance1} we get that 
\begin{equation*}
GW _{1,1} ^{1} (N _{1}, J ^{\lambda _{1}}, A _{\beta}, \beta)
([\overline {M} _{1, 1}] \otimes [C \times S ^{1} ])
 \neq 0.
\end{equation*}
By part two of Proposition \ref{prop:Topembedding} there is a charge 1 generalized Reeb torus $u _{o}$ in $M$.
In particular, $\image \phi (o) = \image (o)$. Also 
by construction, $o \in N$ and so we are done.
\end{proof}

\begin{proof} [Proof of Theorem \ref{thm:infinitetype}] 
If $R ^{\lambda }$ is finite type and $i (R ^{\lambda },
\beta ) \neq 0$ then the theorem follows
immediately by Theorem \ref{thm:basic0}.

We leave the full definition of infinite type vector fields
to the reference ~\cite [Definition 2.5]{cite_SavelyevFuller}. 
We have that $R ^{\lambda}$ is infinite type in class $\beta
$. WLOG assume that it is positive
infinite type. In particular, we may find
a perturbation $X ^{a}$ of $R ^{\lambda }$ together with
a homotopy $X _{t}$, $t \in [0,1] $, s.t.:
\begin{enumerate}
	\item $X _{0} = X ^{a}$, $X _{1} = R ^{\lambda }$. 
	\item $\mathcal{O}  (X ^{a},a, {\beta}) = \{o \in
	\mathcal{O}  (X ^{a}, {\beta}) \,|\,  A (o) \leq a\} $ is
	discrete, where $A$ is the period map as in the
	introduction. 
	\item Each $o \in
	\mathcal{O}  (X ^{a},a, {\beta})$ is contained in a 
	non-branching open compact subset $K _{o} \subset
	\mathcal{O} (\{X ^{a} _{t}\}, \beta) $. Where the latter
	means that: 
	 \begin{enumerate}
	\item ${K} _{o}  \cap  \mathcal{O}  (X _{1}, \beta) $ is connected.
	\item For $o, o' \in \mathcal{O}  (X ^{a}, a, \beta) $ $K _{o} = K _{o'}$
	or $K _{o} \cap K _{o'} = \emptyset$.
\item $\mathcal{O}  (X _{1}, \beta) = \cup _{o}  ({K} _{o} \cap
\mathcal{O}(X _{1}, \beta)).$
\item $(K _{o} \cap \mathcal{O} (X ^{a}, \beta )) \subset
\mathcal{O} (X ^{a},
a, \beta ) $.
\end{enumerate}

	\label{condition:non-branching}
	\item $\sum _{o \in \mathcal{O}  (X ^{a},a, {\beta})}  i (o) > 0$.
\end{enumerate}
Set $N := \mathcal{O}  (X ^{a},a,\beta)$, then by the
condition \ref{condition:non-branching} and by the
non-branching property, $$N = \sqcup _{i \in  \{1,
\ldots,n\}} \mathcal{O}  (X ^{a},\beta) \cap  K _{o
_{i}},$$ disjoint union for some $o _{1},
\ldots o _{n} \in \mathcal{O}  (X ^{a},a,\beta)$.  Set $\widetilde{N} := \cup _{i \in  \{1,
\ldots,n\}} K _{o _{i}}$. Then
$\widetilde{N} \cap \mathcal{O}  (X ^{a}, \beta  ) = N $.
Set $N _{1} := \widetilde{N} \cap \mathcal{O}  (R
^{\lambda}, \beta)  $, then this is an open compact subset
of $\mathcal{O} (X _{1}, \beta ) $. 

Finally, using invariance of the Fuller index we get that $i
(N _{1}, R ^{\lambda},
{\beta }) \neq 0.$ Then the result follows by Theorem \ref{thm:basic0}. 
\end{proof}

\begin{proof} [Proof of Theorem \ref{thm:basis-1}]  Suppose
that $\lambda$ is Morse-Bott and we have an open
compact component $N \subset \mathcal{O} (R ^{\lambda}, \beta)  $,
with $\chi (N) \neq 0 $ so that $i (N, R ^{\lambda}, \beta) \neq
0$, ~\cite [Section 2.1.1] {cite_SavelyevFuller}. If
$\lambda' $ is sufficiently $C ^{1}$ nearby to $\lambda$ then we may find open compact $N' \in \mathcal{O} (R
^{\lambda'}) $ s.t. $$i (N', R ^{\lambda'}, \beta) = i (N,
R ^{\lambda }, \beta ) \neq 0. $$  See ~\cite[Lemma 1.6]
{cite_SavelyevFuller}. Then the result follows by Theorem \ref{thm:basic0}.
\end{proof}
\begin{proof} [Proof of Theorem \ref{theorem_pertubHyperbolic}]
Under the assumptions on the Euler characteristic by
~\cite[Theorem 1.10]{cite_SavelyevGromovFuller}
for any $\beta$-taut $g$ on $X$:
\begin{equation*}
\operatorname {GWF}(g, id, \beta, 1) = F (g, \beta ) = \chi
^{S ^{1}}(L _{\beta }X) \neq 0.
\end{equation*}
Then for $\phi $ as in the hypothesis, let $\{\phi _{t}\}
_{t \in [0,1]}$ be a homotopy between $id$ and $\phi$, with
each $\phi _{t}$ an isometry of $g$. Then $\{(g, \phi _{t})\}
_{t}$ clearly furnishes an $\mathcal{E} $-homotopy
between $(g, id)$ and $(g, \phi)$.
So that $\operatorname
{GWF}(g, \phi, \beta, 1) \neq 0$, by Theorem
\ref{thm:generalizationGWF}.
\end{proof}
\begin{proof} [Proof of Corollary \ref{corollary_hyperbolic}]
If $X$ admits a complete metric of negative curvature, then 
by the proof of \cite[Theorem
1.14]{cite_SavelyevGromovFuller} we may find a not a power
class $\beta' \in \pi _{1} ^{inc} (X)$, s.t. $\beta $ has
a representative which is a $k$-cover of a representative of
$\beta '$. Also,
\begin{align*}
	 \chi ^{S ^{1}}(L _{\beta '} X) & =  \chi(L _{\beta '}X/S
	^{1}) \quad \text{as the action is free by the condition that
	$\beta' $ is not a power} \\ 
	& = 1,
\end{align*}
where the last equality is immediate from the hypothesis
that $X$ admits a complete metric of negative curvature and
classical Morse theory, see proof of
~\cite[Theorem 1.10]{cite_SavelyevGromovFuller}.  Now, if
$g$ is any other complete metric on $X$ with non-positive
curvature then in particular it is $\beta '$-taut. Then by
Theorem \ref{theorem_pertubHyperbolic}, $\phi$ has a charge
one, class $\beta' $ fixed geodesic string. It readily
follows that $\phi $ also has a class $\beta $, charge one
fixed geodesic string. 
\end{proof}

\begin{proof} [Proof of Theorem \ref{thm:finitetypeRiemannian}]
Let $\{(g _{t}, \phi _{t})\}$ be an $\mathcal{E} $-homotopy
as in the hypothesis. And let $o \in \mathcal{O}  (g _{0}, \beta) $ be the 
unique and non-degenerate element. As $\phi
_{0,*} ^{n} (\beta) = \beta  $, it is immediate
that $o$ is a charge $n$ fixed geodesic string of $\phi _{0}$. 

Moreover, the moduli space $\overline{\mathcal{M}} ^n _{1,1}
(J ^{\lambda _{\phi _{0}}}, A ^{1}
_{\widetilde{\beta}}, \widetilde{\beta} )$ consists of one
point, and it is regular by the non-degeneracy of $o$. So
we conclude that $\operatorname {GWF}(g_0, \phi _{0}, \beta,
n) \neq 0$ ($\frac{1}{k}$ where $k$ is the multiplicity of
$o$.). Then by Theorem \ref{thm:generalizationGWF} we
get that $\operatorname {GWF}(g_1, \phi _{1}, \beta,
n) \neq 0$ and so we are done.
\end{proof}

\begin{proof} [Proof of Corollary \ref{cor:Riemannian}]
Let $g$ and $\beta \in \pi _{1} ^{inc} (X) $ be as in the hypothesis. By assumption the
corresponding unit cotangent bundle $(C, \lambda )$ is
definite type in class $\widetilde{\beta} $. If $\phi $ is
some isometry of $g$ homotopic through isometries to the
identity, then the induced strict contactomorphism
$\widetilde{\phi} $ is
homotopic through strict contactomorphisms to the $id$, and
so by Theorem \ref{thm:infinitetype} $\widetilde{\phi
} $ has a class $\widetilde{\beta } $ fixed Reeb
string. So that $\phi $ has a class $\beta $ fixed
geodesic string.
\end{proof}

\begin{proof} [Proof of Theorem \ref{thm:basic1}] 
Let $g _{0}$ be a complete metric on $X$ with
a unique and non-degenerate class $\beta \in \pi _{1}
^{inc}(X)$ geodesic string.
Let $\lambda _{0} = \lambda _{g _{0}}$ be the Liouville
1-form on the $g _{0}$-unit contangent bundle $C$ of $X$.
Let $g$ be as in the hypothesis and let $\lambda  _{1}$ be Liouville 1-form on $C$ corresponding to $g$. 

Let $\{\lambda _{t}\}$, $t \in [0,1] $,  be a smooth homotopy between $\lambda _{0}, \lambda _{1}$.  
We may in addition assume that this homotopy is constant
near the end points. We then get a family $\{(\lambda
_{t}, \alpha)\}$ of first kind integral lcs structures on $C \times S ^{1}$. 

Now let $\{\widetilde{\phi} _{t}\}$, $t \in [0,1] $ be a smooth homotopy
of strict contactomorphisms of $(C, \lambda _{1}) $ with
$\widetilde{\phi}
_{0} = id$, corresponding to a homotopy $\{\phi _{t}\}$ of
isometries of $X,g$, with $\phi _{0} = id$.
We may suppose that
$\{\widetilde{\phi} _{t}\}$ is constant near end points. As in the Proof
of Theorem \ref{thm:basic0}, this gives a smooth
fibration over $\widetilde{M} \to [0,1] $. And as before we
get a family $\{(\lambda'' _{t}, \alpha) \} $, $t \in [0,1]
$, of first kind integral lcs structures on $C \times
S ^{1}$, s.t.: 
\begin{itemize}
\item $(\lambda'' _{0}, \alpha)$ is the lcs-fication of
$\lambda _{0}$. 
\item For each $t$ $(\lambda'' _{t}, \alpha)  $ is isomorphic to
the mapping torus structure $(\lambda _{\widetilde{\phi} _{t}}, \alpha
) $. 
\end{itemize}

Let	 $\{(\lambda' _{t}, \alpha )\}$ be the concatenation of the families
$\{(\lambda _{t}, \alpha ) \}$, $\{(\lambda'' _{t}, \alpha
) \}$, i.e.:
\begin{equation*}
\lambda' _{t} = \begin{cases}
	\lambda _{2t}, &\text{ if } t \in [0, \frac{1}{2}]\\
	\lambda'' _{2t-1}, &\text{ if } t \in [\frac{1}{2}, 1].\\
\end{cases}
\end{equation*}

Let $\{J'  _{t}\}$, $t \in [0,1] $  be a family 
of almost complex structures on $M=C \times S ^{1}$ s.t. $J'
_{t}$ is $(\lambda' _{t}, \alpha )$-admissible for each $t$.
Let $\widetilde{\beta } $ be the lift of $ \beta \in
\pi _{1}  ^{inc} (X)$, as in Section \ref{sec:Definition of GWF}.
Now, by construction and by Theorem \ref{thm:GWFullerMain}
$$GW ^{1} _{1,1} (J' _{0}, A _{\widetilde{\beta}}, \beta )
= i (R ^{\lambda _{g _{0}}}, \widetilde{\beta}) \neq 0, $$ 
with the last inequality due to the assumption that there is
a unique non-degenerate $g _{0}$-geodesic string in class $\beta $.
Then by Lemma \ref{thm:welldefined}, we get that one
of the following holds:
\begin{itemize}
	\item $(\lambda _{\widetilde{\phi}}, \alpha ) $ has an elliptic
charge 1, class $A _{\widetilde{\beta }}$ Reeb 2-curve $u$,
and hence by part one of the Proposition
\ref{prop:Topembedding}, $\widetilde{\phi} $ has
a charge 1 fixed Reeb string in class $\widetilde{\beta } $,
and so $\phi $ has a charge 1 fixed geodesic in class $\beta
$.
\item The family $\{J ' _{t}\}$,
$t \in [0,1] $ has an essential right holomorphic sky catastrophy,
of charge 1, class $ A _{\widetilde{\beta}}$ curves. 
\end{itemize}

Suppose that the latter holds. By the admissibility
condition and by Lemma \ref{lemma:Reeb}, if
$$(u, t) \in \overline{\mathcal{M}} ^{1} _{1,1}
(\{J'  _{t} \}, A _{\widetilde{\beta}}, \beta )$$ then $u$ is a charge 1 elliptic Reeb 2-curve. 
Let $\{X _{t}\}$, $t \in [0,1] $, be the smooth
family of vector fields satisfying $X _{t} = R ^{\lambda
_{2t}}$ for $t \in [0, \frac{1}{2}] $,  $X _{t}
= R ^{\lambda _{1}}$ for $t \in [\frac{1}{2}, 1] $.
Analogously to part \ref{part4_propTopEmbedding} of Proposition
\ref{prop:Topembedding}, we have a proper topological
embedding $$emb: \overline{\mathcal{M}} ^{1} _{1,1}
(\{J'  _{t} \}, A _{\widetilde{\beta}}, \beta) \to \mathcal{O} (\{X _{t}\} , \widetilde{\beta}),$$
 
It follows, by the previous hypothesis, that the family  $\{X
_{t}\} _{t \in [0,1] }$  has a sky catastrophe in class $\beta=0$. 
In addition, this sky catastrophe must be
essential,  as otherwise the original holomorphic sky
catastrophe would not be essential. 

\end{proof}

\begin{proof} [Proof of Theorem \ref{thm:generalizationGWF}]
We have to prove invariance of the counts.
Let $\{(g _{t}, \phi _{t})\} _{t \in [0,1]}$,  be an
$\mathcal{E} $-homotopy. 
Let $(C, \lambda _{t} = \lambda _{g _{t}}) $, be the $g
_{t}$-unit cotangent bundle of $X$, and $\lambda _{g _{t}}$
the Liouville 1-form. (Fixing an implicit identification of
the unit cotangent bundles with a fixed manifold $C$.)
Let $\widetilde{\beta}  \in \pi _{1} (C) $, be the lift of
a class $ \beta \in \pi _{1} ^{inc} (X)$ as in Section
\ref{sec:Definition of GWF}. Denote by $(M _{t}, \lambda
_{t}, \alpha _{t})$ the mapping torus of $\widetilde{\phi
} _{t}$ action on $(C, \lambda _{t})$, where
$\widetilde{\phi} _{t} $ is the strict contactomorphism
induced by the isometry $\phi _{t}$. Finally, let $\{J
_{t}\} _{t \in [0,1]}$,  be a smooth family with $J _{t}$  $(\lambda _{t}, \alpha _{t})$-admissible almost complex structures on $M _{t}$.

By the tautness assumption the family $\{R ^{\lambda
_{t}}\}$ has no sky catastrophe in the class
$\widetilde{\beta } $.
Then by the third part of Proposition
\ref{prop:Topembedding}, $\{J _{t}\}$ has no sky catastrophe in class in $A
_{\widetilde{\beta} }$, and so by Theorem
\ref{thm:welldefined} 
we have $$\operatorname {GWF}  (g
_{0}, \phi _{0}, \beta, n) = \operatorname {GWF}  (g
_{1}, \phi _{1}, \beta, n) $$ and
we are done.
\end{proof}


\begin{proof} [Proof of Theorem 
\ref{thm:Reeb1curves}] 
   Suppose that $u: \Sigma \to M $ is 
   an immersed Reeb 2-curve, we then show that $M$ 
   also has a Reeb 1-curve. Let $\widetilde{u}: 
   \widetilde{\Sigma} \to M  $ be the normalization 
   of $u$, so that $\widetilde{u} $ is an 
   immersion. We have a pair of transverse 1-distributions $D 
   _{1}= \widetilde{u}  ^{*}\mathbb{R} ^{} \langle X _{\alpha }  \rangle   
   $, $D _{2}=\widetilde{u} ^{*} \mathbb{R} ^{} \langle X 
   _{\lambda }  \rangle $ on $\widetilde{\Sigma } $. We 
   may then find an embedded path $\gamma: 
   [0,1] \to \widetilde{\Sigma }   $, tangent to 
   $D _{1}$  s.t. 
   $\lambda (\gamma' (t)) >0 $, $\forall t \in 
   [0,1] $, and s.t.   
   $\gamma (0) $ and $\gamma (1) $ are on a 
   leaf of $D _{2}$.  It is then simple to obtain from 
   this a Reeb 1-curve $o$, by joining the end points of 
   $\gamma $ by an embedded path tangent to $D 
   _{2}$, and perturbing, see Figure \ref{figure:smooth}.
   \begin{figure}[h]
 \includegraphics[width=1.4in]{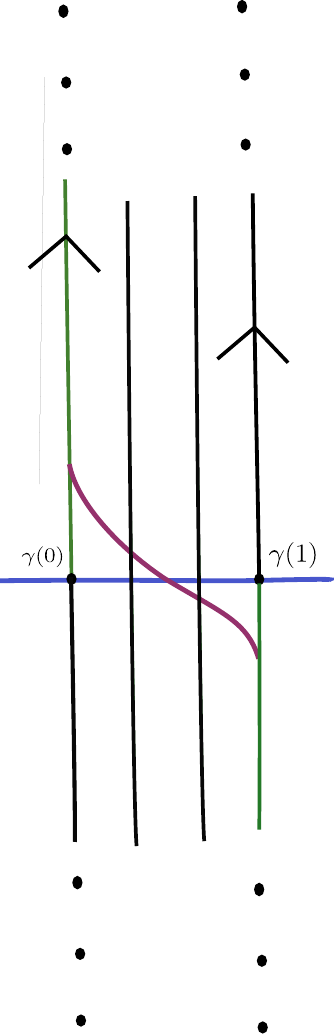}
 \caption {The green shaded path is $\gamma $, the 
 indicated orientation is given by $u ^{*}\lambda$, the 
 $D _{1}$ folliation is shaded in black, the 
 $D _{2}$ folliation is shaded in blue. The purple 
 segment is part of the loop 
 ${o}: S ^{1} \to \Sigma  $, which is 
 is smooth and satisfies $\lambda 
 ({o}' (t)  )>0 $ for all $t$.
 } \label{figure:smooth}
\end{figure}   
This proves the first part of the theorem. 

To prove the second part, suppose that $u: \Sigma \to M$ is an immersed 
elliptic Reeb 2-curve. Suppose that $u$ is not 
normal. Let $\widetilde{u}: \widetilde{\Sigma } \to M$ be its normalization.  Then $\widetilde{\Sigma} $ has a genus 0 
component $\mathcal{S}$. So that $\widetilde{u}: 
\mathcal{S} \simeq \mathbb{CP} ^{1}  \to M$ is 
immersed. The distribution $D 
_{1} = \widetilde{u}  ^{*}\mathbb{R} ^{} \langle X 
_{\alpha}  \rangle$, as appearing above, is 
then a $\widetilde{u} ^{*}\lambda$-oriented 1-dimensional distribution on $ \mathbb{CP} ^{1}  $ which is impossible.
\end{proof}

\begin{appendices} 
\section{Fuller index} \label{appendix:Fuller} Let $X$ be
a complete vector field without zeros on a smooth manifold $M$. 
Set 
\begin{equation} S (X, \beta) = 
   \{o \in L _{\beta} M \,\left .  \right | \exists p \in 
   (0, \infty), \, \text{ $o: \mathbb{R}/\mathbb{Z} \to M $ is a
   periodic orbit of $p X $} \},
\end{equation}
where $L _{\beta} M  $ denotes
the free homotopy class $\beta$ component of the 
free loop space $$LM = \{o: S ^{1} \to M \,|\,  \text{$o$ is
smooth} \} .$$ And where recall that $S ^{1}
= \mathbb{R}/\mathbb{Z}  $.   The above $p$ is uniquely
determined and we denote it by $p (o) $ called the period of
$o$.

There is a natural $S ^{1}$ reparametrization action on $S
(X, \beta )$:  $t \cdot o$ is the loop $t \cdot o(\tau) = o (t + \tau)  $.
The elements of $\mathcal{O} (X, \beta ) := S (X, \beta )/S ^{1} $ will be 
called \emph{orbit strings}.
Slightly abusing notation we just write $o$  for the
equivalence class of $o$. 

The multiplicity $m (o)$ of an orbit string is
the ratio $p (o) /l$ for $l>0$ the period of a simple orbit 
covered by $o$. 

We want a kind of fixed point index of an open compact
subset $N \subset \mathcal{O} (X, \beta )$, which counts orbit
strings $o$ with certain weights.
Assume for simplicity that  $N \subset \mathcal{O} (X) $ is 
finite. (Otherwise, for a general open compact $N \subset
\mathcal{O} (X, \beta )$, we need to perturb.) Then
to such an $(N,X, \beta)$
Fuller associates an index: 
\begin{equation*}
   i (N,X, \beta) = \sum _{o \in N}\frac{1}{m
   (o)} i (o),
\end{equation*}
 where $i (o)$ is the fixed point index of the time $p (o) $ return map of the flow of $X$ with respect to
a local surface of section in $M$ transverse to the image of $o$. 

Fuller then shows that $i (N, X, \beta )$ has the following invariance property.
For a continuous homotopy $\{X _{t}
\}$,  $t \in [0,1]$ set \begin{equation*}  S ( \{X _{t} \}, \beta) = 
   \{(o, t) \in L _{\beta} M \times [0,1] \,|\,  
	  \text{$o \in S (X _{t})$}\}.
\end{equation*}
And given a continuous homotopy $\{X _{t}
\}$, $X _{0} =X $, $t \in [0,1]$, suppose that 
$\widetilde{N} $ is an open compact subset of $S 
(\{X _{t} \}, \beta ) / S ^{1}$, such that $$\widetilde{N} 
\cap \left (L _{\beta }M \times \{0\} 
\right) / S ^{1} =N.
$$ Then if $$N_1 = \widetilde{N} \cap \left (L _{\beta }M 
  \times \{1\} \right) / S 
^{1}$$ we have 
\begin{equation*}
i (N, X, \beta ) = i (N_1, X_1, \beta).
\end{equation*}

In the case where $X$ is the $R ^{\lambda} $-Reeb vector 
field on a contact manifold $(C ^{2n+1} , \lambda )$,
and if $o$ is
non-degenerate, we have: 
\begin{equation} \label{eq:conleyzenhnder}
i (o) = \sign \Det (\Id|
   _{\xi (x)}  - F _{p (o), *}
^{\lambda}| _{\xi (x)}   ) = (-1)^{CZ (o)-n},
\end{equation}
where $F _{p (o), *} ^{\lambda}$ is the differential 
at $x$ of the time $p (o) $ flow map of $R ^{\lambda} 
$, and where $CZ (o)$ is the Conley-Zehnder 
index, see \cite{cite_RobbinSalamonTheMaslovindexforpaths.}.

There is also an extended Fuller index $i (X, 
\beta) \in \mathbb{Q} \sqcup \{\pm \infty\}$, for 
certain $X$ having definite type. This is 
constructed in \cite{cite_SavelyevFuller}, and is 
conceptually analogous to the extended 
Gromov-Witten invariant described in this paper. 

The following is a version of the definition of sky catastrophes first
appearing in Savelyev~\cite{cite_SavelyevFuller},
generalizing a notion commonly called a ``blue sky
catastrophe'', see
Shilnikov-Turaev~\cite{cite_ShilnikovTuraevBlueSky}.  
\begin{definition}\label{def:bluesky}
Let  $\{X _{t} \}$, $t \in [0,1] $ be a continuous family of
non-zero, complete smooth vector fields on a closed manifold
$M$, and let $\beta \in \pi _{1} ^{inc} (X)$.  And let $S
(\{X _{t}\}) $ be as above.
We say that $\{X _{t}\}$  has a \textbf{\emph{right sky catastrophe in class $\beta$}}, if
there is an element $$y \in S  (X _{0}, \beta)
\subset S (\{X
_{t}\},  \beta)  $$ so that there is no open compact subset
of $S
(\{X _{t}\}, \beta) $ containing $y$. 
We say that $\{X _{t}\}$  has a \textbf{\emph{left sky catastrophe in class $\beta$}}, if
there is an element $$y \in S (X _{1}, \beta)
\subset S (\{X
_{t}\},  \beta)  $$ so that there is no open compact subset
of $S (\{X _{t}\}, \beta) $ containing $y$. 
We say that $\{X _{t}\}$  has a \textbf{\emph{sky
catastrophe in class $\beta$}}, if it has either left or
right sky catastrophe in class $\beta $.
\end{definition}
\begin{definition}\label{def:essentialReebsky}
In the case that $X _{t}  = R ^{\lambda _{t}}$ for $\{\lambda _{t}\}$, $t \in [0,1] $ smoothly varying, we say that a sky
catastrophe of Reeb vector fields $\{X _{t}\}$ is
\textbf{\emph{essential}} if the condition of the definition
above holds for any family $\{X' _{t} = R ^{\lambda' _{t}}\}$ satisfying $X' _{0} = X _{0}$ and $X' _{1} = X _{1}$, and such that $\{\lambda' _{t}\}$ is smooth.
\end{definition}


\section{Remark on multiplicity} \label{sec:GromovWittenprelims}
This is a small note on how one deals with curves having non-trivial isotropy groups, in the virtual fundamental class technology. We primarily need this for the proof of Theorem \ref{thm:GWFullerMain}.

Given a closed oriented orbifold $X$, with an orbibundle $E$ over $X$
Fukaya-Ono \cite{cite_FukayaOnoArnoldandGW} show how to construct
using multi-sections its rational homology Euler
class, which when $X$ represents the moduli space of some stable
curves, is the virtual moduli cycle $[X] ^{vir} $.  When this is in degree 0, the corresponding Gromov-Witten invariant is $\int _{[X] ^{vir} } 1. $
However, they  assume that their orbifolds are
effective. This assumption is not really necessary for the purpose of
construction of the Euler class but  is convenient for other technical reasons. A
different approach to the virtual fundamental class which emphasizes branched manifolds is used by
McDuff-Wehrheim, see for example McDuff
\cite{cite_McDuffNotesOnKuranishi},
\cite{cite_McDuffConstructingVirtualFundamentalClass} which
does not have the effectivity assumption, a similar use of
branched manifolds appears in
\cite{cite_CieliebakRieraSalamonEquivariantmoduli}. In the
case of a non-effective orbibundle $E \to X$  McDuff
\cite{cite_McDuffGroupidsMultisections}, constructs
a homological Euler class $e (E)$ using multi-sections,
which  extends the construction \cite{cite_FukayaOnoArnoldandGW}.  McDuff shows that this class $e (E)$ is
Poincare dual to the completely formally natural cohomological Euler class of
$E$, constructed by other authors. In other words there is a natural notion of a
homological Euler class of a possibly non-effective orbibundle.
We shall assume the following black box property of the virtual fundamental
class technology.
\begin{axiom} \label{axiom:GW} Suppose that the moduli space of stable maps is cleanly cut out, which means that it is represented by a (non-effective) orbifold $X$ with an orbifold obstruction bundle $E$, that is the bundle over $X$ of cokernel spaces of the linearized CR operators. Then the virtual fundamental class $[X]^ {vir} $ coincides with $e (E)$.
\end{axiom}
Given this axiom it does not matter to us which virtual moduli cycle technique we use. It is satisfied automatically by the construction of McDuff-Wehrheim,
(at the moment in genus 0, but surely extending).
It can be shown to be satisfied in the approach of John
Pardon~\cite{cite_PardonAlgebraicApproach}.
And it is satisfied by the construction of Fukaya-Oh-Ono-Ohta
\cite{cite_FOOOTechnicaldetails}, the latter is 
communicated to me by Kaoru Ono. When $X$ is 0-dimensional this does follow
 immediately by the construction in
 \cite{cite_FukayaOnoArnoldandGW}, taking any effective Kuranishi neighborhood at the isolated points of $X$, (this actually suffices for our paper.)

As a special case most relevant to us here, suppose we have a moduli space
of elliptic curves in $X$, which is regular with
expected dimension 0. Then its underlying space is a collection of oriented points.
However, as some curves are multiply covered, and so have isotropy groups,  we must treat this is a non-effective 0 dimensional oriented orbifold.
The contribution of each curve $[u]$ to the Gromov-Witten invariant $\int
_{[X] ^{vir} } 1 $ is $\frac{\pm 1}{[\Gamma ([u])]}$, where $[\Gamma ([u])]$ is the order of the isotropy group $\Gamma ([u])$ of $[u]$, 
in the McDuff-Wehrheim setup this is explained in \cite[Section
5]{cite_McDuffNotesOnKuranishi}. In the setup of Fukaya-Ono
\cite{cite_FukayaOnoArnoldandGW} we may readily calculate to get the same thing taking any effective Kuranishi neighborhood at the isolated points of $X$. 
\end{appendices}
\section{Acknowledgements} 
Yong-Geun Oh for discussions on related topics, and for an invitation to IBS-CGP, Korea. Egor Shelukhin and Helmut Hofer for an invitation to the IAS
where I had a chance to present and discuss early versions
of the work.
I also thank Kevin Sackel, Kaoru Ono, Baptiste
Chantraine, Emmy Murphy, Viktor Ginzburg, Yael Karshon,
John Pardon, Spencer Cattalani and Dusa McDuff for related discussions.
\bibliographystyle{siam} 
\bibliography{C:/Users/yasha/texmf/bibtex/bib/link.bib} 

\end{document}